\newtheorem{theorem}{Theorem}[section]
\newtheorem{lemma}[theorem]{Lemma}
\newtheorem{rem}[theorem]{Remark}
\newtheorem{prop}[theorem]{Proposition}
\newtheorem{cor}[theorem]{Corollary}
\newtheorem*{theoremA}{Theorem A}
\newtheorem*{theoremB}{Theorem B}
\newtheorem*{theoremC}{Theorem C}
\newtheorem*{corA}{Corollary}
\DeclareMathOperator{\supp}{supp}
\DeclareMathOperator{\im}{im}
\DeclareMathOperator{\ind}{ind}
\DeclareMathOperator{\sind}{s-ind}
\DeclareMathOperator{\sfl}{sf}
\DeclareMathOperator{\codim}{codim}
\DeclareMathOperator{\gra}{graph}
\title{On the Fredholm Lagrangian Grassmannian, Spectral Flow and ODEs in Hilbert Spaces}
\author{Nils Waterstraat\thanks{This work was supported by the Royal Society Research Grant RG170456.}}
\begin{document}
\date{}
\maketitle

\footnotetext[1]{{\bf 2010 Mathematics Subject Classification: Primary 58E05; Secondary 58J30, 34G10, 47A53}}

\begin{abstract}
\noindent
We consider homoclinic solutions for Hamiltonian systems in symplectic Hilbert spaces and generalise spectral flow formulas that were proved by Pejsachowicz and the author in finite dimensions some years ago. Roughly speaking, our main theorem relates the spectra of infinite dimensional Hamiltonian systems under homoclinic boundary conditions to intersections of their stable and unstable spaces. Our proof has some interest in its own. Firstly, we extend a celebrated theorem by Cappell, Lee and Miller about the classical Maslov index in $\mathbb{R}^{2n}$ to symplectic Hilbert spaces. Secondly, we generalise the classical index bundle for families of Fredholm operators of Atiyah and J\"anich to unbounded operators for applying it to Hamiltonian systems under varying boundary conditions. Finally, we substantially make use of striking results by Abbondandolo and Majer to study Fredholm properties of infinite dimensional Hamiltonian systems.  
\end{abstract}

\section{Introduction}
Let $E$ be a real separable Hilbert space, let $I:=[0,1]$ denote the unit interval and let\linebreak $S:I\times\mathbb{R}\rightarrow\mathcal{S}(E)$ be a family of bounded selfadjoint operators on $E$ which is continuous with respect to the norm topology. We assume that $J:E\rightarrow E$ is a bounded operator such that $J^2=-I_E$, $J^T=-J$, and consider differential equations of the form

\begin{equation}\label{Hamiltonian}
\left\{
\begin{aligned}
Ju'(t)+S_\lambda(t)u(t)&=0,\quad t\in\mathbb{R}\\
\lim_{t\rightarrow\pm\infty}u(t)&=0.
\end{aligned}
\right.
\end{equation}
Let us denote for $\lambda\in I$ and $t_0\in\mathbb{R}$ by 

\begin{align}\label{stableunstable}
\begin{split}
E^u_\lambda(t_0)&=\{u(t_0)\in E:\, Ju'(t)+S_\lambda(t)u(t)=0,\,\, \lim_{t\rightarrow-\infty}u(t)=0\}\subset E,\\
E^s_\lambda(t_0)&=\{u(t_0)\in E:\, Ju'(t)+S_\lambda(t)u(t)=0,\,\, \lim_{t\rightarrow+\infty}u(t)=0\}\subset E
\end{split}
\end{align}
the unstable and stable subspaces of \eqref{Hamiltonian}. Note that \eqref{Hamiltonian} has a non-trivial solution if and only if $E^u_\lambda(t_0)\cap E^s_\lambda(t_0)\neq\{0\}$ for some (and hence any) $t_0\in\mathbb{R}$.\\  
If we denote by $L^2(\mathbb{R},E)$ and $H^1(\mathbb{R},E)$ the usual spaces of maps having values in $E$, then we obtain differential operators

\begin{align}\label{A}
\mathcal{A}_\lambda:H^1(\mathbb{R},E)\subset L^2(\mathbb{R},E)\rightarrow L^2(\mathbb{R},E),\quad (\mathcal{A}_\lambda u)(t)=Ju'(t)+S_\lambda(t)u(t)
\end{align}
which have a non-trivial kernel if and only if \eqref{Hamiltonian} has a non-trivial solution (see \cite{AlbertoODE}).\\
In \cite{AlbertoODE} Abbondandolo and Majer studied Fredholm properties of the operators $\mathcal{A}_\lambda$ in relation to the stable and unstable subspaces \eqref{stableunstable}. Their motivation came from a Morse homology in Hilbert spaces that they constructed in \cite{AlbertoMorseHilbert} (see also \cite{AlbertoNotes}) and where differential equations of the form \eqref{Hamiltonian} naturally appear. In these works, the linear theory was developed that is necessary for the set-up of Morse homology on Hilbert manifolds (see \cite{AlbertoInfinite}, \cite{AlbertoManifold}). Applications of their theory can be found, e.g., in the study of periodic orbits of Hamiltonian systems, periodic solutions of wave equations and solutions of classes of elliptic systems as in \cite{AlbertoTMNA}, \cite{AlbertoNonlinear}, \cite{vanderVorst}, \cite{MarekI}, \cite{MarekII}, \cite{Kryszewski}, \cite{Maalaoui} and \cite{Szulkin}. One of the long term aims of this paper is to open up recent methods from variational bifurcation theory to such classes of nonlinear equations by following the author's work \cite{WaterstraatHomoclinics}, where the case $E=\mathbb{R}^{2n}$ was considered. Moreover, we intend to make our findings applicable to such Hamiltonian PDEs by using new comparison methods for the spectral flow from our paper \cite{Edinburgh}.\\
In what follows, we assume that the family $S:I\times\mathbb{R}\rightarrow\mathcal{S}(E)$ is of the form

\begin{align}\label{form}
S_\lambda(t)=B_\lambda+K_\lambda(t),
\end{align}
where

\begin{enumerate}
\item[(A1)] $K_\lambda(t)$ is compact for all $(\lambda,t)\in I\times\mathbb{R}$, and the limits

\[K_\lambda(\pm\infty)=\lim_{t\rightarrow\pm\infty}K_\lambda(t)\]
exist uniformly in $\lambda$,
\item[(A2)] the operators $JB_\lambda$ and

\[JS_\lambda(\pm\infty):=J(B_\lambda+K_\lambda(\pm\infty))\]
are hyperbolic, i.e., there are no purely imaginary points in their spectra.
\end{enumerate} 
We will show below that it follows from \cite{AlbertoMorseHilbert} that the operators $\mathcal{A}_\lambda$ are selfadjoint Fredholm operators under the assumptions (A1) and (A2). Consequently, the spectral flow $\sfl(\mathcal{A})$ is defined, which is a homotopy invariant for paths of selfadjoint Fredholm operators (see Section \ref{section-sfl} below).\\
The operator $J:E\rightarrow E$ induces a symplectic form on $E$ by $\omega(u,v)=\langle Ju,v\rangle$, which makes it a symplectic Hilbert space (see \cite{BoossZhu} and \cite{NicolaescuII}). The Maslov index for paths of Lagrangian subspaces was first generalised to infinite dimensional symplectic Hilbert spaces by Swanson in \cite{Swanson}. Here we follow the survey \cite{Furutani} of Boo{\ss}-Bavnbek and Furutani's approach \cite{BoossFurutani}. Henceforth, let $\mathcal{FL}^2(E,\omega)$ denote the Fredholm Lagrangian Grassmannian of pairs of spaces (see Section \ref{subsection-Maslov}). We will explain below that it follows from \cite{AlbertoODE} that $\{(E^u_\lambda(t_0),E^s_\lambda(t_0))\}_{\lambda\in I}$ is a path in $\mathcal{FL}^2(E,\omega)$, and so its Maslov index $\mu_{Mas}(E^u_\cdot(t_0),E^s_\cdot(t_0))$ is defined for every fixed $t_0\in\mathbb{R}$. Actually, it is readily seen that this integer does not depend on the choice of $t_0$. The main theorem of this paper reads as follows.

\begin{theoremA}
If the assumptions (A1) and (A2) hold, then

\[\sfl(\mathcal{A})=\mu_{Mas}(E^u_\cdot(0),E^s_\cdot(0)).\]
\end{theoremA}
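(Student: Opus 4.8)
The plan is to reduce Theorem A to the finite-dimensional spectral flow formula of Pejsachowicz and the author (the case $E=\mathbb{R}^{2n}$ from \cite{WaterstraatHomoclinics}) together with the Cappell--Lee--Miller type theorem for symplectic Hilbert spaces and the generalised index bundle that are announced in the abstract. The overall strategy is a homotopy-and-approximation argument: both sides of the claimed identity are homotopy invariants of the path $\{\mathcal{A}_\lambda\}_{\lambda\in I}$ (respectively of $\{(E^u_\lambda(t_0),E^s_\lambda(t_0))\}_{\lambda\in I}$), so it suffices to prove the equality for a convenient representative in each homotopy class and then transport it. First I would deform the family $S_\lambda(t)=B_\lambda+K_\lambda(t)$, using (A1) and (A2), to a family for which $B_\lambda$ is independent of $\lambda$ and $K_\lambda(t)$ has finite rank, keeping the endpoints and the hyperbolicity of $JB_\lambda$ and $JS_\lambda(\pm\infty)$ intact along the deformation; by the continuity/invariance of the spectral flow and of the Maslov index this does not change either side.

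Next I would set up the two main technical pillars. On the analytic side, I would invoke the results of Abbondandolo and Majer from \cite{AlbertoODE} and \cite{AlbertoMorseHilbert} to identify $\ker\mathcal{A}_\lambda$ with $E^u_\lambda(t_0)\cap E^s_\lambda(t_0)$ and, more importantly, to realise $\mathcal{A}_\lambda$ as a compact perturbation (in the graph-norm sense) of the constant-coefficient operator $Ju'+B_\lambda u$, so that the generalised index bundle $\ind(\mathcal{A})$ of the unbounded family is well defined and its associated class controls $\sfl(\mathcal{A})$ through the usual relation between spectral flow and the index bundle of a path of selfadjoint Fredholm operators. On the symplectic side, I would verify that $t_0\mapsto(E^u_\lambda(t_0),E^s_\lambda(t_0))$ traces out, for fixed $\lambda$, the stable/unstable bundles of the linear flow and that the pair lies in $\mathcal{FL}^2(E,\omega)$ exactly because of hyperbolicity at $\pm\infty$; the $t_0$-independence of the Maslov index then follows from the fact that propagation by the (symplectic) fundamental solution is a homotopy through Fredholm Lagrangian pairs.

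The heart of the argument is the comparison step: I would apply the Hilbert-space version of the Cappell--Lee--Miller theorem to rewrite $\mu_{Mas}(E^u_\cdot(0),E^s_\cdot(0))$ as a sum/difference of Maslov-type indices attached to the ends $t\rightarrow\pm\infty$ and to a compact exhaustion $[-T,T]$ of $\mathbb{R}$, and in parallel I would cut $\mathcal{A}_\lambda$ down to the interval $[-T,T]$ with boundary conditions given by the stable and unstable subspaces, using that for $T$ large the kernels stabilise. On the finite-rank, constant-$B$ model this reduces the identity to the statement on the finite-dimensional reduction $E_0:=\spa\{B_\lambda\text{-relevant directions}\}\oplus(\text{range of the }K_\lambda(t))$, where the classical formula of \cite{WaterstraatHomoclinics} applies verbatim, and where the contributions from the orthogonal complement vanish because there the equation is the hyperbolic constant-coefficient equation with no kernel and no Maslov crossings. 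Patching the finite-dimensional equality back through the index bundle and the additivity of both invariants under the decomposition $E=E_0\oplus E_0^\perp$ then yields $\sfl(\mathcal{A})=\mu_{Mas}(E^u_\cdot(0),E^s_\cdot(0))$.

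The main obstacle I anticipate is the comparison/localisation step in infinite dimensions: making precise that the Maslov index of the pair of stable/unstable spaces is computed by a finite-dimensional reduction requires the Cappell--Lee--Miller machinery to be robust under the passage from $\mathbb{R}^{2n}$ to a symplectic Hilbert space — in particular one must control that the ``Fredholm pair'' condition is preserved under propagation and under the approximation by finite-rank $K_\lambda$, and that no Maslov contribution leaks out to infinity. Equally delicate is ensuring that the generalised index bundle for the \emph{unbounded} operators $\mathcal{A}_\lambda$ genuinely computes $\sfl(\mathcal{A})$; this needs a careful choice of a common domain and a reduction to a family of \emph{bounded} selfadjoint Fredholm operators (e.g. via the bounded transform or a Cayley-type trick), after which the Atiyah--J\"anich index bundle and its relation to spectral flow can be quoted. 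I expect these two points to occupy the bulk of the proof, with the finite-rank/hyperbolic-complement bookkeeping being routine once they are in place.
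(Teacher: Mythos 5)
Your high-level inventory of the tools is accurate: the paper does indeed use the Hilbert-space generalisation of Cappell--Lee--Miller (Theorem B), a generalised index bundle for gap-continuous unbounded families (Theorem C), and a truncation of the problem to a compact interval with boundary conditions given by the stable/unstable subspaces. However, the central reduction you propose is not what the paper does, and as stated it does not work.

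The crucial gap is the finite-dimensional reduction. You propose to deform $B_\lambda$ to a $\lambda$-independent operator, approximate $K_\lambda$ by finite-rank operators, and then split $E=E_0\oplus E_0^\perp$ so that all Maslov crossings and spectral-flow contributions are carried by a finite-dimensional symplectic subspace $E_0$ where the formula from \cite{WaterstraatHomoclinics} applies verbatim. This step is not available in general. Even with finite-rank $K_\lambda$, the unstable and stable subspaces $E^{u}_\lambda(t_0)$ and $E^{s}_\lambda(t_0)$ are infinite-dimensional Lagrangian subspaces of $E$ determined by the full hyperbolic operator $JB_\lambda$; there is no canonical $J$-invariant, $B_\lambda$-invariant, finite-dimensional subspace $E_0$ that captures their variation with $\lambda$, and the ``$B_\lambda$-relevant directions'' you invoke are not a well-defined object. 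Moreover, deforming $B_\lambda$ to a constant while keeping $J(B_\lambda+K_\lambda(\pm\infty))$ hyperbolic (assumption (A2) couples $B_\lambda$ and $K_\lambda$) is itself a non-trivial homotopy problem, and the claim that the Maslov index and spectral flow are invariant under this deformation requires the endpoints to remain admissible throughout. The paper sidesteps this entirely: it never reduces to finite dimensions.

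The actual route is purely infinite-dimensional and has four steps. Step 1 applies Theorem B on $[-t_0,t_0]$ to identify $\mu_{Mas}(E^u_\cdot(0),E^s_\cdot(0))$ with $\sfl(\mathcal{Q})$, where $\mathcal{Q}_\lambda u=Ju'$ carries boundary conditions $u(\mp t_0)\in E^{u/s}_\lambda(0)$. Step 2 conjugates $\mathcal{Q}_\lambda$ by the fundamental solution $\Psi_\lambda(t)$ of $Ju'+S_\lambda u=0$ to obtain the operator $\mathcal{A}^0_\lambda u=Ju'+S_\lambda u$ on $[-t_0,t_0]$ with boundary conditions $u(\mp t_0)\in E^{u/s}_\lambda(\mp t_0)$; symplectic invariance and Lemma \ref{gap-product} give $\sfl(\mathcal{Q})=\sfl(\mathcal{A}^0)$. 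Step 3 is the heart and replaces your finite-dimensional reduction: one builds an explicit injective bundle morphism $\iota$ from $E(\hat{\mathcal{A}^0},V)$ into $E(\hat{\mathcal{A}},W_0\oplus W_1)$ using extension of solutions by exponential decay, and shows $\sind(\mathcal{A}^0)=\sind(\mathcal{A})$ in $K^{-1}(I,\partial I)$; Theorem C then gives $\sfl(\mathcal{A}^0)=\sfl(\mathcal{A})$. Note that the kernels of $\mathcal{A}^0_\lambda$ and $\mathcal{A}_\lambda$ are isomorphic for any $t_0$ — they do not need to ``stabilise'' as $T\to\infty$. Step 4 removes the admissibility assumption at the endpoints by a $\delta$-perturbation together with Lemma \ref{MaslovMidpoint}. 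Also, Theorem C is proven using the gap-topology and the domain bundle directly, not via a bounded transform or Cayley trick as you suggest. In short: your ingredients (B, C, truncation) are the right ones, but you must use Theorem C's index bundle comparison in place of the finite-dimensional reduction, which is not available here.
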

\noindent
Let us point out that, in the special case $E=\mathbb{R}^{2n}$, this is a generalisation of the author's previous work \cite{WaterstraatHomoclinics} as well as of the recent paper \cite{Hu} by Hu and Portaluri.\\
Let us now consider for the operators $S_\lambda(\pm\infty)$ from (A2) the families of subspaces 

\begin{align*}
E^s_\lambda(\pm\infty)&=\{x\in E: \exp(tJS_\lambda(\pm\infty))x\rightarrow 0\,\,\text{as}\,\, t\rightarrow\infty\},\\
E^u_\lambda(\pm\infty)&=\{x\in E: \exp(tJS_\lambda(\pm\infty))x\rightarrow 0\,\,\text{as}\,\, t\rightarrow-\infty\}.
\end{align*}
Note that these are the stable and unstable subspaces \eqref{stableunstable} for the autonomous equations

\begin{equation}\label{HamiltonianJac}
\left\{
\begin{aligned}
Ju'(t)+S_\lambda(\pm\infty)u(t)&=0,\quad t\in\mathbb{R}\\
\lim_{t\rightarrow\pm\infty}u(t)&=0.
\end{aligned}
\right.
\end{equation}
Moreover, it can be seen from \cite{AlbertoODE} that $\{(E^u_\lambda(+\infty), E^s_\lambda(-\infty))\}_{\lambda\in I}$ is a path in $\mathcal{FL}^2(E,\omega)$. We obtain below the following corollary of Theorem A, which was proved for $E=\mathbb{R}^{2n}$ by Pejsachowicz in \cite{Jacobo}.

\begin{corA}
If (A1), (A2) hold and $S_0(t)=S_1(t)$ for all $t\in\mathbb{R}$ in \eqref{Hamiltonian}, then 

\[\sfl(\mathcal{A})=\mu_{Mas}(E^u_\cdot(+\infty),E^s_\cdot(-\infty)).\] 
\end{corA}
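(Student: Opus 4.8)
The plan is to deduce the corollary from Theorem A by a homotopy argument inside the Fredholm Lagrangian Grassmannian. Since $S_0(t)=S_1(t)$ for all $t\in\mathbb{R}$, the operator family $\mathcal{A}$ is a loop, and both $\lambda\mapsto(E^u_\lambda(0),E^s_\lambda(0))$ and $\lambda\mapsto(E^u_\lambda(+\infty),E^s_\lambda(-\infty))$ are loops in $\mathcal{FL}^2(E,\omega)$. By Theorem A it therefore suffices to show that these two loops have the same Maslov index, and since the Maslov index of loops is a homotopy invariant, it is enough to join them by a homotopy of loops inside $\mathcal{FL}^2(E,\omega)$.

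To construct such a homotopy I would deform the equation \eqref{Hamiltonian} itself. Writing $S_\lambda(t)=B_\lambda+K_\lambda(t)$, and using that $E^u_\lambda(0)$ depends only on the restriction of the equation to $(-\infty,0]$ while $E^s_\lambda(0)$ depends only on its restriction to $[0,\infty)$, I would homotope the compact part $K_\lambda(\cdot)$ on the two half-lines separately so that $\mathcal{A}_\lambda$ becomes asymptotic to, and eventually identically equal to, the model equation $Ju'+S_\lambda(+\infty)u=0$ on $(-\infty,0]$ and $Ju'+S_\lambda(-\infty)u=0$ on $[0,\infty)$. For this limit equation the stable and unstable spaces at $t_0=0$ are, by the very definitions of $E^u_\lambda(+\infty)$ and $E^s_\lambda(-\infty)$, exactly the two spaces in the statement. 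Along the deformation I need to keep control of three things: (i) the stable and unstable subspaces stay well defined and vary continuously in the parameters, which is the Abbondandolo--Majer dichotomy theory and holds as long as the asymptotic operators remain hyperbolic, the continuity being uniform by the uniform convergence in (A1); (ii) the pair remains a Fredholm pair of Lagrangians, which holds because throughout the homotopy each factor stays a compact perturbation of, respectively, the unstable and the stable subspace of $JB_\lambda$, and those two span $E$; and (iii) the two-parameter family is a loop in $\lambda$, which I get by performing the deformation canonically, using once more that $S_0=S_1$, hence $S_0(\pm\infty)=S_1(\pm\infty)$.

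The delicate point, and the one I expect to be the main obstacle, is that reaching the specific autonomous models above forces the asymptotic operator on, say, the left half-line to move from $JS_\lambda(-\infty)$ to $JS_\lambda(+\infty)$, and this path has to be kept inside the hyperbolic operators while depending continuously on $\lambda$ and remaining compatible with the loop structure. Here I would use that $JS_\lambda(-\infty)$ and $JS_\lambda(+\infty)$ are compact perturbations of the same $JB_\lambda$ whose unstable subspaces are commensurable Lagrangians, so that they lie in one and the same path component of the relevant space of hyperbolic operators, together with the connectedness and the $\pi_1\cong\mathbb{Z}$ description of the Fredholm Lagrangian Grassmannian established earlier in the paper; the remaining work — smoothing the coefficient jump at $t=0$, checking uniformity in $\lambda$, and verifying that the intermediate pairs never leave $\mathcal{FL}^2(E,\omega)$ — is technical but routine. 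As an alternative for this last step one can realise $\mu_{Mas}(E^u_\cdot(+\infty),E^s_\cdot(-\infty))$ directly as $\sfl$ of an auxiliary operator family built from an equation that is autonomous on each half-line, again via Theorem A, and then compare the two spectral flows through a homotopy of selfadjoint Fredholm operators; this runs into the same hyperbolicity–connectedness issue, which is therefore where the real content of the proof lies.
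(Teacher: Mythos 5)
Your overall strategy---reduce to Theorem A, observe that both $\lambda\mapsto(E^u_\lambda(0),E^s_\lambda(0))$ and $\lambda\mapsto(E^u_\lambda(+\infty),E^s_\lambda(-\infty))$ are loops, and compare their Maslov indices via a homotopy in $\mathcal{FL}^2(E,\omega)$---is the right reduction, but the homotopy you propose to build has a genuine gap that the paper's argument sidesteps entirely.

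You want to deform the coefficient $K_\lambda(\cdot)$ on the two half-lines so that the equation becomes autonomous with coefficient $S_\lambda(+\infty)$ on $(-\infty,0]$ and $S_\lambda(-\infty)$ on $[0,\infty)$. For the intermediate unstable and stable subspaces to exist and vary continuously, the asymptotic operator at $t\to-\infty$ must travel along a path of hyperbolic operators from $JS_\lambda(-\infty)$ to $JS_\lambda(+\infty)$, and similarly on the other half-line, and all of this must be done continuously and periodically in $\lambda$. You assert these operators ``lie in one and the same path component of the relevant space of hyperbolic operators,'' but you give no argument, and none is available from what you cite: commensurability of the unstable spaces and vanishing relative dimension do not produce a path through hyperbolic operators, and the na\"ive linear interpolation $(1-\tau)S_\lambda(-\infty)+\tau S_\lambda(+\infty)$ can leave the hyperbolic set. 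You flag this as the ``real content'' of the proof, which is accurate---but that means the proof is not done.

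The paper avoids this issue completely by moving the \emph{observation time} rather than the coefficients. Writing $E^u_\lambda(t_0)=\Psi_\lambda(t_0)E^u_\lambda(0)$ with $\Psi_\lambda$ the (symplectic) fundamental solution, one considers the two-parameter family $(\lambda,s)\mapsto(E^u_\lambda(s t_0),E^s_\lambda(-s t_0))$ on the square $I\times I$; the flow preserves the Lagrangian and Fredholm-pair structure, so no hyperbolicity of intermediate asymptotic operators ever needs to be checked. Decomposing the boundary of the square gives $\mu_{Mas}(E^u_\cdot(0),E^s_\cdot(0))=\mu_{Mas}(\gamma_1)+\mu_{Mas}(\gamma_2)+\mu_{Mas}(\gamma_3)$ with $\gamma_2(\lambda)=(E^u_\lambda(t_0),E^s_\lambda(-t_0))$ and $\gamma_1,\gamma_3$ the two $t$-sides at $\lambda=0,1$; periodicity $S_0=S_1$ makes $\gamma_3$ the reverse of $\gamma_1$, so those contributions cancel. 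Finally, letting $t_0\to\infty$ and invoking the Abbondandolo--Majer convergence $E^u_\lambda(t_0)\to E^u_\lambda(+\infty)$, $E^s_\lambda(-t_0)\to E^s_\lambda(-\infty)$ in the gap topology (uniformly in $\lambda$ via continuous dependence on the asymptotically hyperbolic coefficient) gives the claimed identity. If you want to repair your approach, you would have to supply the hyperbolic-connectedness step; switching to the reparametrisation-by-$t_0$ argument is the cleaner route.
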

\noindent
Consequently, if the path of operators $\mathcal{A}$ in \eqref{A} is periodic, then its spectral flow can be computed from the stable and unstable spaces of the autonomous equations \eqref{HamiltonianJac}. Obviously, the latter spaces are much easier to obtain than those in  \eqref{stableunstable} for the original problem \eqref{Hamiltonian}. Finally, it is worth mentioning that, by proving the above corollary, we further show that, for $E=\mathbb{R}^{2n}$, Pejsachowicz' theorem \cite{Jacobo} can be obtained from our work \cite{WaterstraatHomoclinics}. To the best of our knowledge, this has not been noted before.\\     
The argument for proving Theorem A partially follows an approach to the author's work \cite{WaterstraatHomoclinics} that was recently proposed by Hu and Portaluri in \cite{Hu}. They pointed out that Theorem A can be obtained for $E=\mathbb{R}^{2n}$ from Cappell, Lee and Miller's seminal investigations about the Maslov index \cite{Cappell} in finite dimensions. As we want to use \cite{Cappell} as well, we in particular need to generalise one of the main theorems of that paper to infinite dimensions. This result is of independent interest, and we now want to introduce it briefly. For every path $\{(\Lambda_0(\lambda),\Lambda_1(\lambda))\}_{\lambda\in I}$ in $\mathcal{FL}^2(E,\omega)$, we obtain differential operators

\begin{align}\label{Qopp}
\mathcal{Q}_\lambda:\mathcal{D}(\mathcal{Q}_\lambda)\subset L^2([a,b],E)\rightarrow L^2([a,b],E),\quad (\mathcal{Q}_\lambda u)(t)=Ju'(t),
\end{align}
on the domains

\[\mathcal{D}(\mathcal{Q}_\lambda)=\{u\in H^1([a,b],E):\, u(a)\in\Lambda_0(\lambda),\, u(b)\in\Lambda_1(\lambda)\}.\]
We show below that each $\mathcal{Q}_\lambda$ is selfadjoint and Fredholm. Moreover, we investigate whether the path $\mathcal{Q}=\{\mathcal{Q}_\lambda\}_{\lambda\in I}$ is continuous with respect to the so called gap-metric (see Section \ref{section-gap}), in which case its spectral flow $\sfl(\mathcal{Q})$ is defined. In what follows, we call a path $\{(\Lambda_0(\lambda),\Lambda_1(\lambda))\}_{\lambda\in I}$ in $\mathcal{FL}^2(E,\omega)$ admissible if $\Lambda_0(0)\cap\Lambda_1(0)=\Lambda_0(1)\cap\Lambda_1(1)=\{0\}$.

\begin{theoremB}
If $\{(\Lambda_0(\lambda),\Lambda_1(\lambda))\}_{\lambda\in I}$ is an admissible path in $\mathcal{FL}^2(E,\omega)$, then the corresponding path of differential operators $\mathcal{Q}$ is a continuous path of selfadjoint Fredholm operators, and

\[\sfl(\mathcal{Q})=\mu_{Mas}(\Lambda_0(\cdot),\Lambda_1(\cdot)).\]
\end{theoremB}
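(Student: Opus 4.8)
The plan is to reduce the general admissible path to a normalised model situation by a homotopy argument, and then compute both sides explicitly. First I would verify the structural claims: that each $\mathcal{Q}_\lambda$ is selfadjoint follows from integration by parts together with the fact that the boundary form $\omega(u(b),v(b))-\omega(u(a),v(a))$ vanishes precisely when $u(a),v(a)\in\Lambda_0(\lambda)$ and $u(b),v(b)\in\Lambda_1(\lambda)$ are Lagrangian conditions; and that $\mathcal{Q}_\lambda$ is Fredholm with $\ker\mathcal{Q}_\lambda$ consisting of the constant paths in $\Lambda_0(\lambda)\cap\Lambda_1(\lambda)$ — here one uses that $(\Lambda_0(\lambda),\Lambda_1(\lambda))$ lies in the Fredholm Lagrangian Grassmannian of pairs, so $\Lambda_0(\lambda)\cap\Lambda_1(\lambda)$ is finite dimensional and $\Lambda_0(\lambda)+\Lambda_1(\lambda)$ is closed of finite codimension, which translates directly into the Fredholm property of the constant-coefficient operator $Ju'$. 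Continuity of $\lambda\mapsto\mathcal{Q}_\lambda$ in the gap metric should follow from continuity of $\lambda\mapsto(\Lambda_0(\lambda),\Lambda_1(\lambda))$ by estimating the graphs of the operators: a small gap-perturbation of the boundary conditions gives a small gap-perturbation of the domain and hence of the graph in $L^2\times L^2$.

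Next I would set up the two invariants and use their basic properties. The spectral flow $\sfl(\mathcal{Q})$ and the Maslov index $\mu_{Mas}(\Lambda_0(\cdot),\Lambda_1(\cdot))$ are both homotopy invariants relative to endpoints (the admissibility condition $\Lambda_0(i)\cap\Lambda_1(i)=\{0\}$ for $i=0,1$ is exactly what makes both endpoints invertible/non-crossing, so homotopies fixing the endpoints are legitimate), and both are additive under concatenation of paths. Using a partition of $I$ and these two properties, it suffices to prove the identity for paths confined to a neighbourhood of a single crossing, i.e. for "elementary" paths. By the symplectic reduction techniques available in $\mathcal{FL}^2(E,\omega)$ — splitting off the (finite-dimensional) intersection spaces and a finite-dimensional complement and leaving the rest fixed — one can localise the computation to a finite-dimensional symplectic subspace. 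On that finite-dimensional piece, Theorem A of Cappell–Lee–Miller (in the form already invoked in the excerpt, now available in the infinite-dimensional generalisation the paper establishes) directly identifies the spectral-flow contribution of the model operator $Ju'$ with the Maslov-type crossing form, and on the complementary infinite-dimensional part both sides contribute zero since there the operator stays invertible.

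The main obstacle I expect is the gap-continuity and the localisation step, rather than the final finite-dimensional computation. Specifically: (i) checking that the unbounded selfadjoint operators $\mathcal{Q}_\lambda$, whose domains genuinely vary with $\lambda$, form a gap-continuous path requires care — the natural move is to conjugate by a $\lambda$-dependent bounded isomorphism of $L^2([a,b],E)$ that carries a fixed reference domain onto $\mathcal{D}(\mathcal{Q}_\lambda)$, so that one works with a fixed-domain family with continuously varying coefficients, for which gap-continuity is standard; constructing such a conjugating family continuously in the Lagrangian data is the technical heart. And (ii) the symplectic reduction that splits the crossing onto a finite-dimensional symplectic subspace must be done compatibly with the differential operator structure, so that the reduced path of operators is again of the form $\mathcal{Q}$ for the reduced Lagrangian path; this is where one leans most heavily on the Abbondandolo–Majer / Booß-Bavnbek–Furutani machinery for $\mathcal{FL}^2(E,\omega)$. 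Once these reductions are in place, invoking the generalised Cappell–Lee–Miller theorem and the additivity/homotopy invariance of both $\sfl$ and $\mu_{Mas}$ closes the argument.
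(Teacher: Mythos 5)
Your structural claims at the start are correct and match the paper's Lemma~3.1: integration by parts gives symmetry precisely when the boundary spaces are Lagrangian, and the kernel and cokernel of $\mathcal{Q}_\lambda$ are identified with $\Lambda_0(\lambda)\cap\Lambda_1(\lambda)$ and $E/(\Lambda_0(\lambda)+\Lambda_1(\lambda))$, so Fredholmness is exactly the Fredholm pair condition. For gap-continuity you propose conjugating by a $\lambda$-dependent isomorphism to a fixed-domain family; the paper does something more direct, estimating $\delta(\gra\mathcal{Q}_\lambda,\gra\mathcal{Q}_{\lambda_0})$ by projecting an arbitrary $u\in\mathcal{D}(\mathcal{Q}_\lambda)$ into $\mathcal{D}(\mathcal{Q}_{\lambda_0})$ via $(P_{\lambda_0}w)(t)=w(t)-(1-t)(I-\hat P_{\lambda_0})w(0)-t(I-\tilde P_{\lambda_0})w(1)$ and then using Kato's identity for orthogonal projections. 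Both routes should work, but the conjugation approach does a bit more than is needed.

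The real gap is in your treatment of the spectral flow formula. You want to localise a general admissible path to a neighbourhood of a single crossing, perform a symplectic reduction onto a finite-dimensional symplectic subspace, and then invoke Cappell--Lee--Miller. Two problems. First, as stated your plan invokes \emph{``the infinite-dimensional generalisation the paper establishes''} on the reduced piece -- but that generalisation \emph{is} Theorem~B, so this is circular unless the reduced piece is genuinely finite-dimensional and you use only the original finite-dimensional CLM theorem. Second, and more seriously, the required reduction is not carried out and is not obvious: the intersection $\Lambda_0(\lambda)\cap\Lambda_1(\lambda)$ is a finite-dimensional \emph{isotropic} (not symplectic) subspace, and there is no canonical symplectic splitting $E=E_0\oplus E_1$ under which simultaneously (a) both families $\Lambda_i(\lambda)$ split, (b) the operator $\mathcal{Q}_\lambda$ decomposes as a direct sum of a finite-dimensional $\mathcal{Q}^0_\lambda$ and an invertible infinite-dimensional $\mathcal{Q}^1_\lambda$, and (c) continuity in $\lambda$ is preserved. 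Establishing this compatibility is, as you yourself note, \emph{``the technical heart''}, yet no construction is offered; it is precisely the step where such an argument typically breaks down.

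The paper sidesteps localisation entirely by an axiomatic uniqueness argument. It first proves that $\mathcal{FL}^2_0(E,\omega)$ is path-connected (Lemma~3.2) and recalls that $\mu_{Mas}\colon\pi_1(\mathcal{FL}^2(E,\omega))\to\mathbb{Z}$ is an isomorphism when $\dim E=\infty$ (Theorem~2.1). From these two facts, Proposition~3.3 shows that \emph{any} $\mathbb{Z}$-valued invariant $\mu$ of admissible paths that vanishes on transversal paths, is additive under concatenation, is homotopy-invariant rel endpoints, and satisfies $\mu(\gamma_{nor})=\mu_{Mas}(\gamma_{nor})=1$ for a single normalisation path, must coincide with $\mu_{Mas}$. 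Setting $\mu=\sfl(\mathcal{Q})$, the first three properties are immediate from the basic properties of the spectral flow and the kernel identification $\ker\mathcal{Q}_\lambda\cong\Lambda_0(\lambda)\cap\Lambda_1(\lambda)$. The normalisation is then a concrete computation with the explicit rotation $\Psi_\lambda=\cos(\pi\lambda)I_E+\sin(\pi\lambda)J$ on a one-dimensional subspace: $\mu_{Mas}(\gamma_{nor})=1$ is computed via a crossing form, and $\sfl(\mathcal{Q})=1$ is read off from the explicit eigenvalues of $\mathcal{Q}_\lambda$ (Lemma~3.5). This buys you a proof that never has to split the operator and hence avoids precisely the obstacle your plan stalls on. If you want to salvage your approach, you should either supply the symplectic reduction lemma compatible with the operator structure, or switch to the uniqueness strategy.
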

\noindent
For obtaining Theorem A from Theorem B, we need to deal with the spectral flow for paths of operators having varying domains. This is usually a delicate problem, as apart from non-obvious continuity issues like in Theorem B, we also essentially loose the opportunity to apply crossing forms, which is probably the most powerful method for computing spectral flows (see, e.g., \cite{Robin}, \cite{FPR}, \cite{FPRII}, \cite{WaterstraatHomoclinics}). However, when Atiyah, Patodi and Singer introduced the spectral flow for closed paths of selfadjoint Fredholm operators in \cite{APS}, they showed that it can be computed as first Chern number of a family index. The latter index is an element of the odd $K$-theory group $K^{-1}(S^1)\cong \mathbb{Z}$, and a further aim of this paper is to show that an adapted construction can be used for non-closed paths that are continuous in the gap-topology, where we mainly review material from our PhD thesis \cite{thesis} that has not been published yet. Let us assume that $H$ is a complex Hilbert space and let us denote by $\Omega(\mathcal{CF}^\textup{sa}(H),G\mathcal{C}^\textup{sa}(H))$ the set of all paths of selfadjoint Fredholm operators on $H$ that are continuous with respect to the gap-topology and have invertible endpoints. In what follows, we denote by $K^{-1}(X,Y)$ the odd $K$-theory group of a compact pair of spaces $(X,Y)$, which we recall in Appendix \ref{app-K}, and by $\partial I$ the boundary of the unit interval $I$. Moreover, we use that the Chern number \eqref{Chern} is an isomorphism $c_1:K^{-1}(I,\partial I)\rightarrow\mathbb{Z}$.

\begin{theoremC}
There exists a map

\[\sind:\Omega(\mathcal{CF}^\textup{sa}(H),G\mathcal{C}^\textup{sa}(H))\rightarrow K^{-1}(I,\partial I)\]
such that 

\[c_1(\sind(\mathcal{A}))=\sfl(\mathcal{A})\in\mathbb{Z}\]
for every $\mathcal{A}\in\Omega(\mathcal{CF}^\textup{sa}(H),G\mathcal{C}^\textup{sa}(H))$. 
\end{theoremC}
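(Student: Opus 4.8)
The plan is to mimic the Atiyah--Patodi--Singer construction of the family index, but adapted to the non-closed setting and to the gap-topology. The essential difficulty compared with \cite{APS} is twofold: first, the path $\mathcal{A}$ is not a loop, so we cannot directly produce a class in $K^{-1}(S^1)$; instead we exploit the invertibility of the endpoints to obtain a \emph{relative} class in $K^{-1}(I,\partial I)$. Second, the operators are unbounded and only continuous in the gap-metric, so the standard reduction to norm-continuous families of bounded operators has to be arranged with some care. The natural way around this is to pass from $\mathcal{A}=\{\mathcal{A}_\lambda\}_{\lambda\in I}$ to the bounded, everywhere-defined path of \emph{Cayley transforms} (or equivalently the bounded transform $\mathcal{A}_\lambda(I+\mathcal{A}_\lambda^2)^{-1/2}$), which is known to be norm-continuous precisely when the original path is gap-continuous, and which preserves invertibility of the endpoints up to a controlled modification. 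I would isolate this in a preliminary lemma so that for the rest of the argument $\mathcal{A}$ may be assumed to be a norm-continuous path in $\mathcal{CF}^{\textup{sa}}(H)$ with invertible endpoints.

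Next I would construct $\sind(\mathcal{A})$. Because $\mathcal{A}_0$ and $\mathcal{A}_1$ are invertible selfadjoint Fredholm operators, their spectral projections onto the positive part of the spectrum, together with a compact perturbation, provide the data for a relative $K^{-1}$-class as recalled in Appendix~\ref{app-K}. Concretely, using $\mathcal{A}_0$ invertible we may homotope $\mathcal{A}$, rel endpoints, to a path whose restriction near $\partial I$ is constant, and then the path of unitaries $U_\lambda=\exp(i\pi(I+f(\mathcal{A}_\lambda)))$ (for a suitable normalising function $f$ with $f(\pm\infty)=\pm 1$) is a map $(I,\partial I)\to(U(H),\{I\})$ in the sense used to define odd $K$-theory with compact supports; its class is by definition $\sind(\mathcal{A})\in K^{-1}(I,\partial I)$. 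The verification that this is well defined — independent of the auxiliary normalising function $f$, of the choice of homotopy making the ends constant, and invariant under gap-homotopies of $\mathcal{A}$ with invertible endpoints — is routine once the framework of Appendix~\ref{app-K} is in place, using that any two such choices are joined by a path of the same type.

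Finally I would prove $c_1(\sind(\mathcal{A}))=\sfl(\mathcal{A})$. The cleanest route is to check the identity on a generating example and then invoke that both sides are homotopy invariants and are additive under concatenation of paths. The spectral flow is characterised by these two properties together with its value on a single model crossing (a path crossing one simple eigenvalue transversally), and $c_1\circ\sind$ enjoys the same formal properties: homotopy invariance is built into the construction of $\sind$ and into the fact that $c_1$ is well defined on $K^{-1}(I,\partial I)$, while additivity under concatenation corresponds, on the $K$-theory side, to the additivity of $K^{-1}(I,\partial I)\cong\widetilde{K}^{-1}(I/\partial I)=\widetilde{K}^{-1}(S^1)$ under the pinch map. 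Evaluating $c_1(\sind(\cdot))$ on the model path $\lambda\mapsto \operatorname{diag}(\lambda-\tfrac12,\,\ast)$ on $H$, one gets the loop $e^{2\pi i\lambda}$ on the relevant one-dimensional summand, whose first Chern number is $1=\sfl$ of that path. Hence the two integer-valued invariants agree on all of $\Omega(\mathcal{CF}^{\textup{sa}}(H),G\mathcal{C}^{\textup{sa}}(H))$.

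The main obstacle I expect is not the formal $K$-theory bookkeeping but the passage between the gap-topology on unbounded operators and the norm-topology after the bounded transform, specifically making sure that gap-continuity with invertible (hence automatically regular) endpoints really does yield a continuous map into $U(H)$ of the right relative type, and that gap-homotopies are carried to the appropriate unitary homotopies. This is exactly the technical content that, as the introduction notes, is reviewed from the thesis \cite{thesis}, and I would treat it as the heart of the proof.
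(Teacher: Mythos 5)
Your construction of $\sind$ takes a genuinely different route from the paper's, but it rests on a false technical claim precisely at the step you yourself identify as the heart of the matter: that the bounded (Riesz) transform $\mathcal{A}_\lambda(I+\mathcal{A}_\lambda^2)^{-1/2}$ is norm-continuous if and only if $\mathcal{A}$ is gap-continuous, and that this is ``equivalent'' to the same statement for the Cayley transform. In fact the gap topology on $\mathcal{C}^{\textup{sa}}(H)$ is strictly coarser than the topology pulled back along the Riesz transform; this is discussed at length in Booss-Bavnbek, Lesch and Phillips \cite{UnbSpecFlow}. More generally $T\mapsto f(T)$ is gap-to-norm continuous for $f\in C_0(\mathbb{R})$, but not in general for a bounded $f$ with distinct limits at $\pm\infty$. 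Thus neither the Riesz transform nor your proposed unitaries $\exp(i\pi(I+f(\mathcal{A}_\lambda)))$ with $f(\pm\infty)=\pm 1$ need be norm-continuous when $\mathcal{A}$ is merely gap-continuous. The Cayley transform $\kappa(T)=(T-i)(T+i)^{-1}$, by contrast, \emph{is} a gap-to-norm homeomorphism onto its image and is the correct bridge to bounded operators. You must also land in the subspace of unitaries $U$ with $U+I$ Fredholm — not in all of $U(H)$, which is contractible by Kuiper's theorem and classifies nothing — and use that invertibility of $\mathcal{A}_0,\mathcal{A}_1$ is equivalent to $-1\notin\sigma(\kappa(\mathcal{A}_0)),\, -1\notin\sigma(\kappa(\mathcal{A}_1))$ in order to obtain a \emph{relative} class.

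Even once these points are repaired, your construction remains different from the paper's. The paper never passes to bounded transforms or unitaries: it builds a \emph{domain bundle} $\mathfrak{D}(\mathcal{A})$ directly from the graph projections, regards $\mathcal{A}$ as a Fredholm bundle morphism $\mathfrak{D}(\mathcal{A})\to X\times H$, and for selfadjoint families suspends to $\hat{\mathcal{A}}_{(\lambda,s)}=\mathcal{A}_\lambda+isI_H$ over $X\times\mathbb{R}$, defining $\sind(\mathcal{A})$ as the index bundle of this Fredholm morphism in $K(X\times\mathbb{R},Y\times\mathbb{R})=K^{-1}(X,Y)$. That construction is set up once for an arbitrary compact pair $(X,Y)$ and makes the varying domains an explicit Banach bundle, which a Cayley-transform route does not give for free. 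Your final step — characterising $c_1\circ\sind$ by homotopy invariance, concatenation additivity and normalisation on the path $P_-+(\lambda-\tfrac12)P_0+P_+$, and invoking Lesch's uniqueness theorem — is exactly what the paper does.
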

\noindent  
Previous versions of Theorem C for paths of selfdjoint Fredholm operators having a fixed domain have been applied in \cite{IchK} and \cite{CalcVar}. However, to the best of our knowledge, Theorem C is the first approach to compute the spectral flow for general gap-continuous paths by using $K$-theory. This shall be of independent interest as it provides a new method for computing spectral flows for paths of operators having non-constant domains. The capability of Theorem C will be demonstrated in the third step of the proof of Theorem A in Section \ref{Section-ProofA}.\\
Actually, we show below that for every gap-continuous family $\mathcal{A}:(X,Y)\rightarrow(\mathcal{CF}^\textup{sa}(H),G\mathcal{C}^\textup{sa}(H))$ of selfadjoint Fredholm operators there is an element $\sind(\mathcal{A})\in K^{-1}(X,Y)$ which coincides with Atiyah, Patodi and Singer's family index for families of bounded selfadjoint Fredholm operators when $Y=\emptyset$. In the construction of $\sind(\mathcal{A})$, we also generalise the classical index bundle for families of bounded and not necessarily selfadjoint Fredholm operators, which was introduced independently by Atiyah and J\"anich in the sixties, to gap-continuous families.\\     
Let us outline the structure of the paper. The content is rather technical and requires a couple of preliminaries, and so we begin in the next section with a recap of the spectral flow and the Maslov index. In the section on the spectral flow, we will recall several facts about the gap-metric on the space of selfadjoint Fredholm operators. Afterwards, we introduce Theorem A,B and C in detail and prove them in the order B, C, A. However, we want to point out that B and C are independent of each other and so the reader might also read C before B.\\
Finally, we want to make a few remarks on our notation. In general, $H$ is a real or complex Hilbert space unless otherwise stated. However, if we require the Hilbert space to be real, we denote it by $E$ instead of $H$. The symbols $\mathcal{L}(H)$ and $GL(H)$ stand for the bounded and invertible operators on $H$, respectively. Moreover, $\mathcal{BF}(H)$ denotes the subspace of $\mathcal{L}(H)$ of all bounded Fredholm operators. In this paper, we mostly deal with unbounded operators, and we denote by $\mathcal{C}(H)\supset\mathcal{L}(H)$ the closed operators on $H$. As usual, $\sigma(S)$ stands for the spectrum of $S\in\mathcal{C}(H)$, and $S^\ast$ denotes its adjoint. However, also here we use a different notation for real Hilbert spaces $E$, where the adjoint will be denoted by $S^T$ as for the transpose of a real matrix. This is in accordance with the notation in \cite{WaterstraatHomoclinics}, where we considered the equations \eqref{Hamiltonian} in finite dimensions. Finally, the identity on $E$ is denoted by $I_E$, which we abbreviate by $I_{2n}$ in the case that $E=\mathbb{R}^{2n}$.   

\subsubsection*{Acknowledgement}
\vspace*{-0.2cm}
There was a further assumption in Theorem A in a previous version of this work, which required that the relative dimension \eqref{Hermann} vanishes. We want to thank Hermann Schulz-Baldes (Erlangen) who explained to us that this actually follows from (A1) and (A2).


\section{Preliminaries: Maslov Index and Spectral Flow}

\subsection{Fredholm Lagrangian Grassmannian and the Maslov Index}\label{subsection-Maslov}
The aim of this section is to recall some basic facts about the Maslov index for paths of Fredholm pairs of Lagrangian subspaces in a symplectic Hilbert space. There are different notions of symplectic Hilbert spaces (see e.g. \cite{BoossZhuII}), but here such spaces are Hilbert spaces $(E,\langle\cdot,\cdot\rangle)$ with an invertible bounded operator $J:E\rightarrow E$ such that $J^T=-J$ and $J^2=-I_E$, where $J^T$ denotes the adjoint of $J$. The corresponding symplectic form on $E$ is given by $\omega(x,y)=\langle Jx,y\rangle$. Our main reference in this section is Furutani's work \cite{Furutani}, who defines a symplectic Hilbert space as a pair $(E,\widetilde{\omega})$ where $\widetilde{\omega}$ is a non-degenerate skew-symmetric bounded bilinear form. Of course, our form $\omega$ has all these properties. Moreover, it is shown in \cite{Furutani} that every $\widetilde{\omega}$ is of the form of our $\omega$ for some operator $J$ which has the above properties if we just modify the scalar product of $E$ to an equivalent one.\\
An important observation, here and below, is that the set of all closed subspace $\mathcal{G}(E)$ in a Hilbert space $E$ is canonically a metric space with respect to the \textit{gap-metric}

\begin{align}\label{gap-subspaces}
d_G(U,V)=\|P_U-P_V\|,\quad U,V\in\mathcal{G}(E),
\end{align}
where $P_U$ and $P_V$ denote the orthogonal projections onto $U$ and $V$, respectively. Actually, $\mathcal{G}(E)$ is an analytic Banach manifold (see \cite{AbbondandoloMajerGrass}), which however, will not be needed in this paper. Instead, we now suppose that $E$ is a symplectic Hilbert space and consider a submanifold of $\mathcal{G}(E)$. Let us first recall that a closed subspace $L\subset E$ is called \textit{Lagrangian} if  

\[L=L^\circ:=\{x\in E:\omega(x,y)=0\text{ for all } y\in E\}.\]
Henceforth, we will hardly make use of this definition, but use the elementary fact that $L\in\mathcal{G}(E)$ is Lagrangian if and only if

\begin{align}\label{Lperp=JL}
L^\perp=J(L),
\end{align}
where $L^\perp$ denotes the orthogonal complement with respect to the scalar product $\langle\cdot,\cdot\rangle$ of $E$ (\cite[Prop. 1.7]{Furutani}). Note that the set of all Lagrangian subspaces $\Lambda(E,\omega)$ of $E$ inherits a metric from $\mathcal{G}(E)$, but let us mention in passing that the topology of $\Lambda(E,\omega)$ depends substantially on the dimension of $E$. Indeed, if $E$ is of finite dimension, then $\Lambda(E,\omega)$ has an infinitely cyclic fundamental group and an isomorphism to the integers is given by the Maslov index \cite{Maslov}. In contrast, if $E$ is of infinite dimension, as we usually assume in this paper, then $\Lambda(E,\omega)$ is contractible as a consequence of Kuiper's Theorem (see \cite[Thm. 1.14]{Furutani}). However, $\Lambda(E,\omega)$ contains an interesting subset which is topologically non-trivial and has shown a lot of times to be the right setting for generalising the Maslov index to infinite dimensions.\\
Let us recall that two subspaces $L,M\in\mathcal{G}(E)$ are a \textit{Fredholm pair} if

\[\dim(L\cap M)<\infty\,\, \text{ and }\, \codim(L+M)<\infty,\]
and the \textit{index} of a Fredholm pair is defined by

\[\ind(L,M)=\dim(L\cap M)-\codim(L+M).\] 
It is often required in the definition of a Fredholm pair that $L+M$ is closed. That this is redundant was explained, e.g., in \cite{BoossFurutani}.\\
For a fixed $W\in\Lambda(E,\omega)$, we denote by $\mathcal{FL}_W(E,\omega)$ the set of all $L\in\Lambda(E,\omega)$ such that $(L,W)$ is a Fredholm pair, and we note that 

\begin{align}\label{ind=0Lagrangian}
\ind(L,W)=0,\quad L\in\mathcal{FL}_W(E,\omega),
\end{align}
by \eqref{Lperp=JL} (see \cite[(1.3)]{NicolaescuI}). It can be shown that $\mathcal{FL}_W(E,\omega)$ is an open subset of $\Lambda(E,\omega)$, and moreover, it has an infinitely cyclic fundamental group by \cite[Thm. 1.54]{Furutani}. The Maslov index extends to this infinite dimensional setting as integer valued invariant $\mu_{Mas}(\Lambda,W)$ for paths $\Lambda$ in $\mathcal{FL}_W(E,\omega)$. Its heuristic interpretation is as in the finite dimensional case, namely, it is the net number of intersections of $\Lambda(\lambda)$ and $W$ whilst $\lambda$ travels along the unit interval. The construction of the Maslov index consists of two parts. Firstly, there is a map from $\mathcal{FL}_W(E,\omega)$ to a set $U_J$ of unitary operators on a complex Hilbert space. Secondly, there is a winding number for paths of operators in $U_J$. The composition of these maps is the Maslov index and indeed reduces to the classical one if $E$ is of finite dimension. We recap this construction from \cite{Furutani} in Appendix \ref{app-Maslov}, where we need it to prove Lemma \ref{MaslovMidpoint} which is crucial in the final step of the proof of Theorem A. Apart from this, we will not use any particular details about the construction, but just need the following three basic properties which can all be found in \cite{Furutani}:

\begin{enumerate}
\item[(i)] If $\Lambda(\lambda)\cap W=\{0\}$ for all $\lambda\in I$, then $\mu_{Mas}(\Lambda,W)=0$.
\item[(ii)] The Maslov index is additive under the concatenation of paths, i.e.
\[\mu_{Mas}(\Lambda_1\ast\Lambda_2,W)=\mu_{Mas}(\Lambda_1,W)+\mu_{Mas}(\Lambda_2,W)\]
if $\Lambda_1,\Lambda_2:I\rightarrow\mathcal{FL}_W(E,\omega)$ are two paths such that $\Lambda_1(1)=\Lambda_2(0)$.
\item[(iii)] If $\Lambda:I\times I\rightarrow\mathcal{FL}_W(E,\omega)$ is a homotopy such that $\Lambda(s,0)$ and $\Lambda(s,1)$ are constant for all $s\in I$, then
\[\mu_{Mas}(\Lambda(0,\cdot),W)=\mu_{Mas}(\Lambda(1,\cdot),W).\]
\end{enumerate}
Finally, given the fact that the fundamental group of $\mathcal{FL}_W(E,\omega)$ is infinitely cyclic, it is not difficult to see that $\mu_{Mas}$ actually provides an explicit isomorphism between $\mathcal{FL}_W(E,\omega)$ and the integers (see \cite[\S 3]{Furutani}).\\
As in the finite dimensional case, the Maslov index can be generalised to pairs of subspaces. Note that the diagonal $\Delta$ in $E\times E$ is a Lagrangian subspace, when $E\times E$ is considered as symplectic Hilbert space with respect to the symplectic form $\omega_{E\times E}=\omega_E\times(-\omega_E)$. It is readily seen that $\Lambda_1(\lambda)\times\Lambda_2(\lambda)\in \mathcal{FL}_\Delta(E\times E,\omega_{E\times E})$ if $(\Lambda_1(\lambda),\Lambda_2(\lambda))\in\mathcal{FL}^2(E,\omega)$, where the latter set denotes the set of all Fredholm pairs of Lagrangian subspaces of $E$. The Maslov index of a path of pairs $(\Lambda_1,\Lambda_2)$ in $\mathcal{FL}^2(E,\omega)$ is defined as the Maslov index of $\Lambda_1\times\Lambda_2$ as a path in $\mathcal{FL}_\Delta(E\times E,\omega_{E\times E})$. It is shown in \cite[Prop. 2.32]{Furutani} that $\mu_{Mas}(\Lambda_1,\Lambda_2)=\mu_{Mas}(\Lambda_1,W)$ if $\Lambda_2\equiv W$ is a constant path, so that this is indeed an extension of the Maslov index for paths in $\mathcal{FL}_W(E,\omega)$. Of course, we obtain as immediate results from the above properties (i)-(iii)

\begin{enumerate}
\item[(i')] If $\Lambda_1(\lambda)\cap \Lambda_2(\lambda)=\{0\}$ for all $\lambda\in I$, then $\mu_{Mas}(\Lambda_1,\Lambda_2)=0$.
\item[(ii')] The Maslov index is additive under the concatenation of paths, i.e.
\[\mu_{Mas}((\Lambda_1,\Lambda_2)\ast(\tilde{\Lambda}_1,\tilde{\Lambda}_2))=\mu_{Mas}(\Lambda_1,\Lambda_2)+\mu_{Mas}(\tilde{\Lambda}_1,\tilde{\Lambda}_2)\]
if $(\Lambda_1,\Lambda_2),(\tilde{\Lambda}_1,\tilde{\Lambda}_2):I\rightarrow\mathcal{FL}^2(E,\omega)$ are two pairs of paths such that $(\Lambda_1(1),\Lambda_2(1))=(\tilde{\Lambda}_1(0),\tilde{\Lambda}_2(0))$.
\item[(iii')] If $(\Lambda_1,\Lambda_2):I\times I\rightarrow\mathcal{FL}^2(E,\omega)$ is a homotopy such that $\Lambda_1(s,0),\Lambda_2(s,0)$, $\Lambda_1(s,1)$ and $\Lambda_2(s,1)$ are constant for all $s\in I$, then
\[\mu_{Mas}((\Lambda_1(0,\cdot),\Lambda_2(0,\cdot)))=\mu_{Mas}((\Lambda_1(1,\cdot),\Lambda_2(1,\cdot))).\]
\end{enumerate}
As $\pi_1(\mathcal{FL}^2(E,\omega))\cong\mathbb{Z}$ by \cite[Cor. 1.6]{NicolaescuI} if $E$ is of infinite dimension, the following assertion is an immediate consequence of the definition of $\mu_{Mas}$ on $\mathcal{FL}^2(E,\omega)$ and the fact that it is an isomorphism $\pi_1(\mathcal{FL}_W(E,\omega))\rightarrow\mathbb{Z}$ for fixed Lagrangian subspaces $W$.

\begin{theorem}\label{Furutani-iso}
The Maslov index

\[\mu_{Mas}:\pi_1(\mathcal{FL}^2(E,\omega))\rightarrow\mathbb{Z}\]
is an isomorphism if $E$ is of infinite dimension.
\end{theorem}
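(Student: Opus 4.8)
The plan is to leverage the two facts that are already in place: $\pi_1(\mathcal{FL}^2(E,\omega))\cong\mathbb{Z}$ by \cite[Cor. 1.6]{NicolaescuI} when $E$ is infinite dimensional, and, for a fixed Lagrangian $W$, the Maslov index $\mu_{Mas}(\cdot,W)$ is an explicit isomorphism $\pi_1(\mathcal{FL}_W(E,\omega))\rightarrow\mathbb{Z}$ by \cite[Thm. 1.54, \S 3]{Furutani}. Since every surjective group homomorphism $\mathbb{Z}\rightarrow\mathbb{Z}$ is automatically an isomorphism, it suffices to show that $\mu_{Mas}$ descends to a well-defined group homomorphism on $\pi_1(\mathcal{FL}^2(E,\omega))$ and that this homomorphism is surjective.

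First I would check that $\mu_{Mas}$ induces a homomorphism on the fundamental group. Fix a basepoint $(\Lambda_0,\Lambda_1)\in\mathcal{FL}^2(E,\omega)$ and consider loops based there; note that $\mu_{Mas}$ is a priori defined for them, since the Maslov index on $\mathcal{FL}^2(E,\omega)$ is defined for arbitrary paths (via $\Lambda_1\times\Lambda_2$ in $\mathcal{FL}_\Delta(E\times E,\omega_{E\times E})$). A basepoint-preserving homotopy of such loops is a map $H:I\times I\rightarrow\mathcal{FL}^2(E,\omega)$ whose restrictions to $I\times\{0\}$ and $I\times\{1\}$ are constant equal to $(\Lambda_0,\Lambda_1)$, so property (iii') applies and shows that homotopic based loops have the same Maslov index; property (i') shows that the constant loop has Maslov index $0$. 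Combining this with the additivity (ii') under concatenation yields that $\mu_{Mas}:\pi_1(\mathcal{FL}^2(E,\omega),(\Lambda_0,\Lambda_1))\rightarrow\mathbb{Z}$ is a group homomorphism.

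For surjectivity I would use the continuous map $j:\mathcal{FL}_W(E,\omega)\rightarrow\mathcal{FL}^2(E,\omega)$, $L\mapsto(L,W)$, obtained by holding the second component fixed. Taking $W:=\Lambda_1$ and basing at $\Lambda_0\in\mathcal{FL}_W(E,\omega)$, the induced map $j_\ast:\pi_1(\mathcal{FL}_W(E,\omega),\Lambda_0)\rightarrow\pi_1(\mathcal{FL}^2(E,\omega),(\Lambda_0,\Lambda_1))$ satisfies $\mu_{Mas}\circ j_\ast=\mu_{Mas}(\cdot,W)$ by \cite[Prop. 2.32]{Furutani}. As the right-hand side is already an isomorphism onto $\mathbb{Z}$, the homomorphism $\mu_{Mas}:\pi_1(\mathcal{FL}^2(E,\omega))\rightarrow\mathbb{Z}$ is surjective, hence an isomorphism.

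The whole argument is essentially formal, so I do not anticipate a genuine obstacle; the only point deserving care is the first step, namely making sure that the hypothesis of property (iii') — constancy of the boundary paths of the homotopy square — is actually met by a basepoint-preserving homotopy of loops, and that $\mu_{Mas}$ of a null-homotopic (in particular constant) loop is $0$. All of the real content of the statement is carried by the two cited facts that $\pi_1(\mathcal{FL}^2(E,\omega))\cong\mathbb{Z}$ and that $\mu_{Mas}(\cdot,W)$ is an isomorphism on $\pi_1(\mathcal{FL}_W(E,\omega))$.
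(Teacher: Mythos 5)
Your argument is correct and is exactly what the paper has in mind: it merely asserts that the result is "an immediate consequence" of $\pi_1(\mathcal{FL}^2(E,\omega))\cong\mathbb{Z}$ and of $\mu_{Mas}(\cdot,W)$ being an isomorphism on $\pi_1(\mathcal{FL}_W(E,\omega))$, and you spell out precisely why (the induced homomorphism is surjective via the inclusion $L\mapsto(L,W)$, and a surjection $\mathbb{Z}\to\mathbb{Z}$ is an isomorphism). The only minor imprecision is invoking (i') for the constant loop at an arbitrary basepoint, where the intersection need not be trivial — but that is easily repaired either by choosing a transversal basepoint or by noting that additivity alone forces the constant loop to have index $0$.
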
 
\noindent
Note that Theorem \ref{Furutani-iso} is wrong for finite dimensional spaces $E$. Indeed, if $E=\mathbb{R}^{2n}$, then $\mathcal{FL}^2(\mathbb{R}^{2n},\omega)=\Lambda(\mathbb{R}^{2n})\times\Lambda(\mathbb{R}^{2n})$ and consequently 

\[\pi_1(\mathcal{FL}^2(\mathbb{R}^{2n},\omega))=\pi_1(\Lambda(\mathbb{R}^{2n}))\times\pi_1(\Lambda(\mathbb{R}^{2n}))=\mathbb{Z}\oplus\mathbb{Z}.\]


\subsection{The Spectral Flow in the Gap Metric}

\subsubsection{Fredholm Operators and the Gap Metric}\label{section-gap}
In this section, we consider (possibly) unbounded operators $T:\mathcal{D}(T)\subset H\rightarrow H$, which are defined on a dense subspace $\mathcal{D}(T)$ of the Hilbert space $H$ which can be either real or complex. Let us recall that $T$ is \textit{closed} if its graph, which we henceforth denote by $\gra(T)$, is a closed subspace of $H\times H$. Note that the set $\mathcal{C}(H)$ of all closed operators on $H$ can be canonically embedded into the Grassmannian $\mathcal{G}(H\times H)$ and so inherits a metric. In other words, 

\begin{align}\label{gap operators}
d_G(S,T)=\|P_{\gra(S)}-P_{\gra(T)}\|,\quad S,T\in\mathcal{C}(H),
\end{align}
defines a metric on $\mathcal{C}(H)$, which is called the \textit{gap-metric}. The topologies induced by the operator norm and the gap-metric on the subset of bounded operators $\mathcal{L}(H)\subset\mathcal{C}(H)$ are equivalent (see \cite[Rem. IV.2.16]{Kato}). In particular, every norm-continuous family of operators in $\mathcal{L}(H)$ is also continuous in $\mathcal{C}(H)$. In what follows, we will use this fact without further reference. Finally, note that even though $\mathcal{G}(H\times H)$ is complete, $(\mathcal{C}(H),d_G)$ is not, which is readily seen by considering a sequence of graphs that converges in $\mathcal{G}(H\times H)$ to a space which has a non-trivial intersection with $\{0\}\times H$.\\
There are two subsets of $\mathcal{C}(H)$ that will be of particular interest for us. Firstly, let us recall that a densely defined operator $T$ is called selfadjoint if it is symmetric and $\mathcal{D}(T)=\mathcal{D}(T^\ast)$, where $T^\ast$ denotes the adjoint of $T$. Clearly, every selfadjoint operator is closed, and in what follows we denote by $\mathcal{C}^\textup{sa}(H)\subset\mathcal{C}(H)$ the subset of all selfadjoint operators. Secondly, an operator $T\in\mathcal{C}(H)$ is \textit{Fredholm} if its kernel and its cokernel are of finite dimension. The difference of these numbers is the \textit{index} of $T$. Let us point out that every Fredholm operator has a closed range $\im(T)\subset H$ (see \cite{Gohberg}). Henceforth, we denote by $\mathcal{CF}(H)\subset\mathcal{C}(H)$ the subset of all Fredholm operators, and by $\mathcal{CF}_k(H)$ the elements in $\mathcal{CF}(H)$ of index $k\in\mathbb{Z}$. Note that there is an important difference between the previous definitions: a selfadjoint operator is automatically closed, whereas we require a Fredholm operator to be closed in its definition. In what follows, we will be in particular interested in the intersection of $\mathcal{C}^\textup{sa}(H)$ and $\mathcal{CF}(H)$, i.e. the set of selfadjoint Fredholm operators, which we denote by $\mathcal{CF}^\textup{sa}(H)$. Note that every element of $\mathcal{CF}^\textup{sa}(H)$ has Fredholm index $0$, i.e. $\mathcal{CF}^\textup{sa}(H)\subset\mathcal{CF}_0(H)$. The next lemma, that we will use several times below, gives a complete characterisation of which elements of $\mathcal{CF}_0(H)$ actually belong to $\mathcal{CF}^\textup{sa}(H)$.

\begin{lemma}\label{symmetric-selfadjoint}
If $T\in\mathcal{CF}_0(H)$ is symmetric, then $T\in\mathcal{CF}^\textup{sa}(H)$. In other words, a symmetric Fredholm operator of index $0$ is selfadjoint. 
\end{lemma}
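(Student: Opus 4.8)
The plan is to show that a symmetric closed operator $T$ with $\dim\ker T<\infty$ and $\codim\im T=\dim\ker T$ must satisfy $\mathcal{D}(T^*)=\mathcal{D}(T)$. The starting point is the general fact that a densely defined symmetric operator $T$ is automatically closable and satisfies $T\subset T^*$; moreover $T^*$ is closed, and since $T$ here is already closed we have the chain $\overline{T}=T\subset T^*$ with $T^*$ closed. So the only thing to verify is that $T^*$ has no proper extension beyond $T$, i.e. that $\mathcal{D}(T^*)\subset\mathcal{D}(T)$.

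First I would record the orthogonal decompositions coming from the Fredholm/closed-range hypothesis. Since $T$ is closed Fredholm, $\im T$ is closed, so $H=\im T\oplus\ker T^*$ as an orthogonal sum (using $(\im T)^\perp=\ker T^*$), and likewise $H=\im T^*\oplus\ker T$ wait — more carefully, since $T\subset T^*$, we at least have $(\im T)^\perp = \ker T^*$. From the index-zero hypothesis $\dim\ker T=\codim\im T=\dim\ker T^*<\infty$ wait, I must be careful: $\codim\im T = \dim(\im T)^\perp = \dim\ker T^*$, and the Fredholm index of $T$ being $0$ says $\dim\ker T=\dim\ker T^*$. Also $T^*$ is Fredholm of index $0$ as well (adjoints of Fredholm operators are Fredholm with opposite index, and here the index is $0$). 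The key comparison is then between $\ker T$ and $\ker T^*$: since $T$ is symmetric, $\ker T\subset\ker T^*$, and they have the same finite dimension, hence $\ker T=\ker T^*$.

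Next I would argue that $T^*$ restricted to $\mathcal{D}(T^*)\cap(\ker T)^\perp$ into $(\ker T^*)^\perp=(\ker T)^\perp=\im T$ is a bijection. Consider $x\in\mathcal{D}(T^*)$ with $T^*x=0$; then $x\in\ker T^*=\ker T\subset\mathcal{D}(T)$, so such $x$ already lies in $\mathcal{D}(T)$. For general $x\in\mathcal{D}(T^*)$, write $T^*x\in H$; since $T^*x\perp\ker T^*$ automatically (as $T^*$ maps into $(\ker T)^\perp$ when restricted appropriately — actually I should instead use that $\im T$ is dense in... no, $\im T$ is closed). The cleanest route: decompose $H=\ker T\oplus\im T$ orthogonally (legitimate since $\ker T=\ker T^*$ and $\im T$ is closed with $(\im T)^\perp=\ker T^*=\ker T$). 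Given $x\in\mathcal{D}(T^*)$, decompose $T^*x = w$ with $w = Tv$ for a unique $v\in\mathcal{D}(T)\cap(\ker T)^\perp$ — but this requires $T^*x\in\im T$, which I need to establish. Here is where I would use symmetry together with the index condition: for any $u\in\mathcal{D}(T)$, $\langle x, Tu\rangle=\langle T^*x,u\rangle$; if $T^*x$ had a component in $\ker T$, say $T^*x=k+Tv$ with $k\in\ker T$, $k\neq 0$, then choosing $u=k\in\ker T\subset\mathcal{D}(T)$ gives $\langle x,Tk\rangle=\langle x,0\rangle=0$ on the left, while $\langle T^*x,k\rangle=\|k\|^2+\langle Tv,k\rangle=\|k\|^2$ (since $\im T\perp\ker T$) $\neq 0$, a contradiction. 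Hence $T^*x\in\im T$. Then $T^*x=Tv$ for some $v\in\mathcal{D}(T)$, and $T^*(x-v)=T^*x-Tv=0$ (using $Tv=T^*v$), so $x-v\in\ker T^*=\ker T\subset\mathcal{D}(T)$, giving $x\in\mathcal{D}(T)$. Thus $\mathcal{D}(T^*)\subset\mathcal{D}(T)$, and combined with $T\subset T^*$ we conclude $T=T^*$, i.e. $T\in\mathcal{CF}^\textup{sa}(H)$.

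The main obstacle — or rather the one point that needs genuine care — is justifying the orthogonal splitting $H=\ker T\oplus\im T$ and the inclusion $T^*x\in\im T$; everything hinges on deducing $\ker T=\ker T^*$ from the index-$0$ hypothesis, which in turn relies on knowing $\dim\ker T^*$ is finite (so equality of dimensions forces equality of the nested subspaces). One should double-check that the Fredholm index of a symmetric closed operator is computed as $\dim\ker T-\dim\ker T^*$ rather than $\dim\ker T - \codim\overline{\im T}$; these agree precisely because $\im T$ is closed (a consequence of $T$ being Fredholm, as noted in the excerpt via \cite{Gohberg}). With that in hand the argument is a short chase through orthogonal complements.
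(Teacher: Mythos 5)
Your proof is correct and takes essentially the same approach as the paper: both use symmetry plus the Fredholm index-zero hypothesis to obtain $(\im T)^\perp = \ker T$ (hence the orthogonal splitting $H = \ker T \oplus \im T$), then decompose $T^*u$ along this splitting and invoke $\ker T \subset \mathcal{D}(T)$ to conclude $u \in \mathcal{D}(T)$. The only cosmetic difference is that you kill the $\ker T$-component of $T^*x$ directly by pairing against it, whereas the paper shows $\langle u-u_1, Tv\rangle = 0$ for all $v\in\mathcal{D}(T)$ and deduces $u-u_1 \in (\im T)^\perp = \ker T$; these are interchangeable micro-variations of the same argument.
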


\begin{proof}
As $T$ is symmetric, we see that $\ker(T)\subset(\im T)^\perp$, and since both spaces are of the same dimension for Fredholm operators of index $0$, this shows that

\begin{align}\label{ker=im}
(\im T)^\perp=\ker(T).
\end{align}  
We now claim that every symmetric Fredholm operator which satisfies \eqref{ker=im} is selfadjoint, where we follow an argument that we have learnt from the proof of Proposition 3.1 in \cite{SalamonWehrheim}. We let $u\in\mathcal{D}(T^\ast)$ and note at first that $\langle u,T v\rangle=\langle w,v\rangle$ for $w=T^\ast u\in H$ and all $v\in\mathcal{D}(T)$. As $\im(T)$ is closed, we see from \eqref{ker=im} that there are $w_1\in\ker(T)$ and $u_1\in\mathcal{D}(T)$ such that $w=w_1+Tu_1$. Therefore,

\begin{align}\label{selfdomain}
\langle u-u_1,Tv\rangle=\langle T^\ast u,v\rangle-\langle Tu_1,v\rangle=\langle w-T u_1,v\rangle=\langle w_1,v\rangle,
\end{align}
and the latter term vanishes for all $v\in\im(T)\cap\mathcal{\mathcal{D}}(T)$ by \eqref{ker=im}.\\
By \eqref{ker=im}, every $v\in\mathcal{D}(T)$ can be written as $v=v_1+v_2$ where $v_1\in\ker(T)$ and $v_2\in\im(T)$. As $\ker(T)\subset\mathcal{D}(T)$, we see that actually $v_2\in \im(T)\cap\mathcal{D}(T)$. Hence, by \eqref{selfdomain},

\[\langle u-u_1,Tv\rangle=\langle u-u_1,Tv_2\rangle=0,\quad v\in\mathcal{D}(T),\]
and so $u-u_1\in(\im T)^\perp=\ker(T)\subset\mathcal{D}(T)$, where we have used once again \eqref{ker=im}. Since $u_1\in\mathcal{D}(T)$, we finally obtain $u\in\mathcal{D}(T)$ and so $T$ is selfadjoint.
\end{proof}
\noindent
As usual, if $T:\mathcal{D}(T)\subset H\rightarrow H$ and $S:\mathcal{D}(S)\subset H\rightarrow H$ are densely defined, their composition $TS$ is an operator on $\mathcal{D}(TS)=S^{-1}(\mathcal{D}(T))$. We note the following simple corollary of Lemma \ref{symmetric-selfadjoint} for later reference.

\begin{cor}\label{cor-orthequ}
If $T\in\mathcal{CF}^\textup{sa}(H)$ and $M\in GL(H)$, then $M^\ast TM\in\mathcal{CF}^\textup{sa}(H)$.
\end{cor}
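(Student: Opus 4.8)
The plan is to work with the operator $S:=M^\ast T M$ on its natural domain $\mathcal{D}(S)=M^{-1}(\mathcal{D}(T))$ and to verify that $S$ is symmetric, closed, and Fredholm of index $0$; then Lemma \ref{symmetric-selfadjoint} immediately upgrades this to $S\in\mathcal{CF}^{\textup{sa}}(H)$, which sidesteps checking the domain equality $\mathcal{D}(S^\ast)=\mathcal{D}(S)$ directly. Throughout I would use that $M\in GL(H)$ forces $M$, $M^{-1}$, $M^\ast$ and $(M^\ast)^{-1}=(M^{-1})^\ast$ all to be linear homeomorphisms of $H$.

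First I would note that $\mathcal{D}(S)=M^{-1}(\mathcal{D}(T))$ is dense, being the image of the dense subspace $\mathcal{D}(T)$ under the homeomorphism $M^{-1}$. Symmetry is a one-line computation: for $u,v\in\mathcal{D}(S)$ we have $Mu,Mv\in\mathcal{D}(T)$, so $\langle M^\ast T M u,v\rangle=\langle T M u,M v\rangle=\langle M u,T M v\rangle=\langle u,M^\ast T M v\rangle$, using the symmetry of $T$ and boundedness of $M,M^\ast$. Closedness follows from a graph-limit argument: if $(u_n,Su_n)\to(u,w)$ in $H\times H$, then $Mu_n\to Mu$ and $T M u_n=(M^\ast)^{-1}Su_n\to(M^\ast)^{-1}w$, so closedness of $T$ gives $Mu\in\mathcal{D}(T)$ with $T M u=(M^\ast)^{-1}w$, i.e. $u\in\mathcal{D}(S)$ and $Su=w$.

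It then remains to see that $S$ is Fredholm of index $0$. Since $M$ and $M^\ast$ are injective, $\ker S=M^{-1}(\ker T)$, which is finite-dimensional of dimension $\dim\ker T$; and $\im S=M^\ast(\im T)$, which is closed (the image of the closed subspace $\im T$ under the isomorphism $M^\ast$) and of codimension $\codim(\im T)=\dim\coker T<\infty$. Hence $S\in\mathcal{CF}_0(H)$ because $T$ has index $0$, and Lemma \ref{symmetric-selfadjoint} now yields $S\in\mathcal{CF}^{\textup{sa}}(H)$. (Selfadjointness can alternatively be read off from $(M^\ast T M)^\ast=((M^\ast)(TM))^\ast=(TM)^\ast(M^\ast)^\ast=(TM)^\ast M=(M^\ast T^\ast)M=M^\ast T M$, valid since $M^\ast$ is bounded and $M\in GL(H)$.) I do not expect a genuine obstacle here; the only point requiring care is keeping domains straight — in particular that $\mathcal{D}(S)=M^{-1}(\mathcal{D}(T))$, not $\mathcal{D}(T)$ — when manipulating kernels, ranges, and adjoints, which is exactly why routing the final step through Lemma \ref{symmetric-selfadjoint} is the cleanest option.
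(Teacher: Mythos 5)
Your proof is correct and follows essentially the same route as the paper: verify that $M^\ast TM$ is symmetric and Fredholm of index $0$, then invoke Lemma \ref{symmetric-selfadjoint}. The only difference is that you verify the Fredholm property and index by hand (computing $\ker S=M^{-1}(\ker T)$, $\im S=M^\ast(\im T)$, and closedness via a graph-limit argument), whereas the paper simply cites \cite[Thm.\ XVIII 3.1]{Gohberg} for the product of densely defined Fredholm operators and the additivity of the index.
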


\begin{proof}
We just need to note that $M^\ast TM$ is obviously symmetric. Moreover, it is Fredholm of index $0$, as the product of densely defined Fredholm operators is Fredholm and 

\[\ind(M^\ast TM)=\ind(M^\ast)+\ind(T)+\ind(M)=\ind(T)=0\]
by \cite[Thm. XVIII 3.1]{Gohberg}.
\end{proof}
\noindent
If $W\subset H$ is a dense subset that is a Hilbert space in its own right, then we can consider

\begin{align}\label{BF}
\mathcal{BF}^\textup{sa}(W,H):=\{T\in\mathcal{L}(W,H):\, T\,\text{ Fredholm},\, T^\ast=T\},
\end{align}
where the adjoint is meant as adjoint of an unbounded operator on $H$ with dense domain $W$. Note that $\mathcal{BF}^\textup{sa}(W,H)$ inherits a topology from the space of bounded operators $\mathcal{L}(W,H)$. On the other hand, $\mathcal{BF}^\textup{sa}(W,H)$ is a subset of $\mathcal{CF}^\textup{sa}(H)$ and so one might ask about the relation of the different topologies. This was answered by Lesch in \cite[Prop. 2.2]{Lesch} as follows.

\begin{theorem}\label{Leschincl}
The canonical inclusion 

\[\mathcal{BF}^\textup{sa}(W,H)\subset\mathcal{CF}^\textup{sa}(H)\]
is continuous.
\end{theorem}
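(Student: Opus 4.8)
The plan is to show that convergence in the operator norm of $\mathcal{L}(W,H)$ implies convergence in the gap-metric $d_G$ on $\mathcal{C}(H)$, i.e. convergence of the orthogonal projections onto the graphs inside $H\times H$. So let $T_n\to T$ in $\mathcal{BF}^\textup{sa}(W,H)$; since $T$ and each $T_n$ are bounded $W\to H$, their graphs $\gra(T_n)$ and $\gra(T)$ are closed subspaces of $H\times H$, and the point is to estimate $\|P_{\gra(T_n)}-P_{\gra(T)}\|$.

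First I would use a standard criterion for gap-convergence: it suffices to control the \emph{gap} $\hat\delta(\gra(T_n),\gra(T))=\max\{\delta(\gra(T_n),\gra(T)),\delta(\gra(T),\gra(T_n))\}$, where $\delta(U,V)=\sup_{u\in U,\|u\|=1}\operatorname{dist}(u,V)$, because $\|P_U-P_V\|$ is bounded by a fixed function of $\hat\delta(U,V)$ (see \cite[Ch. IV]{Kato}). Now take a unit vector $(w,T_n w)\in\gra(T_n)$ with $w\in W$, $\|w\|_H^2+\|T_nw\|_H^2=1$; the nearest candidate in $\gra(T)$ is $(w,Tw)$, and $\operatorname{dist}\big((w,T_nw),\gra(T)\big)\le\|(w,T_nw)-(w,Tw)\|=\|T_nw-Tw\|_H$. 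The obstacle is that $\|T_nw-Tw\|_H$ is controlled by $\|T_n-T\|_{\mathcal{L}(W,H)}\,\|w\|_W$, and $\|w\|_W$ need not be bounded in terms of $\|(w,T_nw)\|_{H\times H}$. This is exactly the place where selfadjointness and the Fredholm property must enter.

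The key step is therefore to get a uniform bound $\|w\|_W\le C$ for unit vectors $(w,T_nw)$ in the graph, uniformly in large $n$. For this I would use that $T$ is selfadjoint Fredholm on $H$ with dense domain $W$, hence $W=\ker T\oplus(\ker T)^\perp\cap W$ (using $(\im T)^\perp=\ker T$ from Lemma \ref{symmetric-selfadjoint}), and on the complement $T$ is bounded below as a map into $H$; combined with the closed graph theorem this shows the graph norm $\|w\|_H+\|Tw\|_H$ is equivalent to $\|w\|_W$ on $W$. So $\|w\|_W\le C(\|w\|_H+\|Tw\|_H)\le C(\|w\|_H+\|T_nw\|_H+\|T_n-T\|_{\mathcal{L}(W,H)}\|w\|_W)$; for $n$ large enough that $C\|T_n-T\|_{\mathcal{L}(W,H)}<\tfrac12$, this absorbs and yields $\|w\|_W\le 2C(\|w\|_H+\|T_nw\|_H)\le 2\sqrt2\,C$ since $(w,T_nw)$ has unit norm in $H\times H$. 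Hence $\operatorname{dist}\big((w,T_nw),\gra(T)\big)\le 2\sqrt2\,C\,\|T_n-T\|_{\mathcal{L}(W,H)}$, giving $\delta(\gra(T_n),\gra(T))\to 0$; the symmetric estimate $\delta(\gra(T),\gra(T_n))\to 0$ is the same computation with the roles of $T$ and $T_n$ swapped (again valid for $n$ large), and then $d_G(T_n,T)\to 0$ follows.

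I expect the genuinely delicate point to be the graph-norm equivalence $\|w\|_W\simeq\|w\|_H+\|Tw\|_H$, i.e. that the embedding $W\hookrightarrow H$ together with $T\in\mathcal{L}(W,H)$ forces the $W$-norm to be controlled by the graph norm of $T$ acting in $H$. Since $W$ is assumed to be a Hilbert space continuously and densely embedded in $H$ and $T:W\to H$ is bounded, the map $w\mapsto(w,Tw)$ is a bounded injection of $W$ into $H\times H$ with closed range (the graph is closed because $T$ is selfadjoint, hence closed as an operator in $H$); by the open mapping theorem this is a topological isomorphism onto $\gra(T)$, which is precisely the desired equivalence. Everything else is the routine gap-metric bookkeeping above, and the restriction to $n$ large is harmless since continuity is a local statement. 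Alternatively, one may invoke Lesch's argument \cite[Prop. 2.2]{Lesch} directly; the sketch above indicates why it works.
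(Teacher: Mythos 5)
Your argument is correct, but note that the paper does not actually prove Theorem~\ref{Leschincl}; it simply cites Lesch's \cite[Prop.~2.2]{Lesch} for it. So what you have produced is a self-contained replacement proof rather than a reconstruction of an argument in the paper.

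Two clean-ups are worth making. (1) The central estimate---that the $W$-norm is equivalent to the graph norm of $T$ on $H$---uses only that $T$ is a \emph{closed} operator in $H$; selfadjointness alone already gives this, and the Fredholm hypothesis plays no role. Consequently your first route to the norm equivalence, via the decomposition $W=\ker T\oplus((\ker T)^\perp\cap W)$ and boundedness below on the complement, is an unnecessary detour: the open-mapping argument you give at the end (the bounded bijection $w\mapsto(w,Tw)$ from $W$ onto the closed subspace $\gra(T)\subset H\oplus H$ is automatically a topological isomorphism) is shorter, correct, and more general, and you should keep only that one. (2) The paper's own display \eqref{dG} records the exact identity $d_G(U,V)=\|P_U-P_V\|=\max\{\delta(U,V),\delta(V,U)\}$, so there is no need to appeal to ``a fixed function of $\hat\delta$.'' Also, the half $\delta(\gra(T),\gra(T_n))$ is even easier than you suggest: a unit vector $(w,Tw)\in\gra(T)$ automatically satisfies $\|w\|_W\le C$ by the norm equivalence for $T$ alone, with no absorption of $\|T_n-T\|_{\mathcal{L}(W,H)}$ required, so ``the same computation with roles swapped'' slightly overcomplicates a one-line bound. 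With these adjustments the proof is complete, and it is a more elementary, purely metric argument than the resolvent-based one that the cited source employs.
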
  
\noindent
In particular, any path in $\mathcal{BF}^\textup{sa}(W,H)$ is also continuous with respect to the gap-topology, which we will use below in the proof of Theorem A.


\subsubsection{The Spectral Flow}\label{section-sfl}
The reader who is well acquainted with the spectral flow as introduced in \cite{UnbSpecFlow} will just need to skim through the rest of this section to become familiar with our notations. Let us point out that we denote, as in previous sections, parameters by $\lambda$. We are aware that it is common in the literature to use $t$ instead, but this would clash with the variable $t$ in \eqref{Hamiltonian}. In particular, let us emphasize that in what follows, $\lambda$ is never an element of the spectrum of an operator.\\
We recall at first that for every selfadjoint Fredholm operator $T$ there is $\varepsilon>0$ and a neighbourhood $\mathcal{N}_{T,\varepsilon}\subset\mathcal{CF}^\textup{sa}(H)$ such that $\pm\varepsilon\notin\sigma(S)$ and the spectral projection $\chi_{[-\varepsilon,\varepsilon]}(S)$ is of finite rank for all $S\in\mathcal{N}_{T,\varepsilon}$. If now $\mathcal{A}:I\rightarrow\mathcal{CF}^\textup{sa}(H)$ is a path, then there are $0=\lambda_0<\lambda_1<\ldots<\lambda_N=1$ such that the restriction of $\mathcal{A}$ to $[\lambda_{i-1},\lambda_i]$ is contained in a neighbourhood $\mathcal{N}_{T_i,\varepsilon_i}$ for some $T_i\in\mathcal{CF}^\textup{sa}(H)$ and $\varepsilon_i>0$. We set

\begin{align}\label{sfl}
\sfl(\mathcal{A})=\sum^N_{i=1}{\left(\dim(\im(\chi_{[0,\varepsilon_i]}(\mathcal{A}_{\lambda_i}))-\dim(\im(\chi_{[0,\varepsilon_i]}(\mathcal{A}_{\lambda_{i-1}}))\right)}.
\end{align}
Note that the dimensions of the images of the spectral projections in \eqref{sfl} are just the number of eigenvalues in $[0,\varepsilon_i]$ including their multiplicities.\\ 
It was first observed by Philips in \cite{Philips} that this definition does not depend on the choices of the numbers $\lambda_i$ and $\varepsilon_i$. Note that, roughly speaking, the spectral flow is the net number of eigenvalues of $\mathcal{A}_0$ that cross zero whilst the parameter $\lambda$ travels along the interval $I$. The most important properties of the spectral flow are

\begin{enumerate}
\item[(i)] If $\mathcal{A}_\lambda$ is invertible for all $\lambda\in I$, then $\sfl(\mathcal{A})=0$.
\item[(ii)] If $\mathcal{A}^1$ and $\mathcal{A}^2$ are two paths in $\mathcal{CF}^\textup{sa}(H)$ such that $\mathcal{A}^1_1=\mathcal{A}^2_0$, then

\[\sfl(\mathcal{A}^1\ast\mathcal{A}^2)=\sfl(\mathcal{A}^1)+\sfl(\mathcal{A}^2).\]
\item[(iii)] Let $h:I\times I\rightarrow\mathcal{CF}^\textup{sa}(H)$ be a homotopy such that $h(s,0)$ and $h(s,1)$ are invertible for all $s\in I$. Then 
\[\sfl(h(0,\cdot))=\sfl(h(1,\cdot)).\]
\end{enumerate} 
Let us point out that the first two properties are immediate consequences of the definition \eqref{sfl}, whereas the third one requires a little bit of work. Actually, it is easy to see that the homotopy invariance even holds when the endpoints are not invertible as long as the dimension of the kernels of $h(s,0)$ and $h(s,1)$ are constant. This is obvious from the interpretation of the spectral flow, and also easy to see from the proof of (iii) in \cite{Philips}. Let us finally note the following stability property of the spectral flow for later reference (cf. \cite[\S 7]{Pejsachowicz}).

\begin{lemma}\label{sflperturbation}
Let $\mathcal{A}:I\rightarrow\mathcal{CF}^\textup{sa}(H)$ be gap-continuous and $\mathcal{A}^\delta=\mathcal{A}+\delta I_H$ for $\delta\in\mathbb{R}$. Then

\[\sfl(\mathcal{A})=\sfl(\mathcal{A}^\delta)\]  
for any sufficiently small $\delta>0$.
\end{lemma}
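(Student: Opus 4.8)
The plan is to evaluate both spectral flows directly from the definition \eqref{sfl} with one common subdivision of $I$, and then to check that for sufficiently small $\delta>0$ none of the finitely many integers entering that formula changes when $\mathcal{A}$ is replaced by $\mathcal{A}^\delta$.

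First I would fix, as in the construction of the spectral flow, a partition $0=\lambda_0<\lambda_1<\dots<\lambda_N=1$ together with numbers $\varepsilon_1,\dots,\varepsilon_N>0$ such that $\mathcal{A}|_{[\lambda_{i-1},\lambda_i]}$ is contained in some $\mathcal{N}_{T_i,\varepsilon_i}$; in particular $\pm\varepsilon_i\notin\sigma(\mathcal{A}_\lambda)$ and $\chi_{[-\varepsilon_i,\varepsilon_i]}(\mathcal{A}_\lambda)$ is of finite rank for every $\lambda\in[\lambda_{i-1},\lambda_i]$. On this compact interval $\mathcal{A}$ is gap-continuous and $\varepsilon_i$ lies in the resolvent set of each $\mathcal{A}_\lambda$, so $\lambda\mapsto(\mathcal{A}_\lambda-\varepsilon_i I_H)^{-1}$ is norm-continuous and hence $\lambda\mapsto\operatorname{dist}(\varepsilon_i,\sigma(\mathcal{A}_\lambda))=\|(\mathcal{A}_\lambda-\varepsilon_i I_H)^{-1}\|^{-1}$ is continuous and strictly positive; the same holds for $-\varepsilon_i$. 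Consequently there is $\rho_i>0$ with $\sigma(\mathcal{A}_\lambda)\cap\big((\varepsilon_i-\rho_i,\varepsilon_i+\rho_i)\cup(-\varepsilon_i-\rho_i,-\varepsilon_i+\rho_i)\big)=\emptyset$ for all $\lambda\in[\lambda_{i-1},\lambda_i]$. Moreover, at each partition point $\lambda_j$ the point $0$ is isolated in $\sigma(\mathcal{A}_{\lambda_j})$ because $\mathcal{A}_{\lambda_j}\in\mathcal{CF}^\textup{sa}(H)$, so there is $\eta_j>0$ with $\sigma(\mathcal{A}_{\lambda_j})\cap[-\eta_j,\eta_j]\subseteq\{0\}$. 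I would then pick $\delta$ with $0<\delta<\min(\rho_1,\dots,\rho_N,\eta_0,\dots,\eta_N)$.

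Next I would use $\sigma(\mathcal{A}^\delta_\lambda)=\sigma(\mathcal{A}_\lambda)+\delta$ and, by the spectral theorem, $\chi_{[a,b]}(\mathcal{A}^\delta_\lambda)=\chi_{[a-\delta,b-\delta]}(\mathcal{A}_\lambda)$ for all $a\le b$. For $\lambda\in[\lambda_{i-1},\lambda_i]$ and $\delta<\rho_i$ this gives $\pm\varepsilon_i\notin\sigma(\mathcal{A}^\delta_\lambda)$, and since $\sigma(\mathcal{A}_\lambda)$ meets neither $[-\varepsilon_i-\delta,-\varepsilon_i)$ nor $(\varepsilon_i-\delta,\varepsilon_i]$, also $\chi_{[-\varepsilon_i,\varepsilon_i]}(\mathcal{A}^\delta_\lambda)=\chi_{[-\varepsilon_i-\delta,\varepsilon_i-\delta]}(\mathcal{A}_\lambda)=\chi_{[-\varepsilon_i,\varepsilon_i]}(\mathcal{A}_\lambda)$, which is of finite rank; thus $\mathcal{A}^\delta|_{[\lambda_{i-1},\lambda_i]}$ again lies in a set of the form $\mathcal{N}_{T,\varepsilon_i}$, so the same partition and the same $\varepsilon_i$ compute $\sfl(\mathcal{A}^\delta)$ via \eqref{sfl}. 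It therefore suffices to show $\dim\im\chi_{[0,\varepsilon_i]}(\mathcal{A}^\delta_{\lambda_j})=\dim\im\chi_{[0,\varepsilon_i]}(\mathcal{A}_{\lambda_j})$ whenever $j\in\{i-1,i\}$. But $\chi_{[0,\varepsilon_i]}(\mathcal{A}^\delta_{\lambda_j})=\chi_{[-\delta,\varepsilon_i-\delta]}(\mathcal{A}_{\lambda_j})$, and the choices $\delta<\eta_j$ and $\delta<\rho_i$ force $\sigma(\mathcal{A}_{\lambda_j})\cap[-\delta,0)=\emptyset$ and $\sigma(\mathcal{A}_{\lambda_j})\cap(\varepsilon_i-\delta,\varepsilon_i]=\emptyset$, whence $\chi_{[-\delta,\varepsilon_i-\delta]}(\mathcal{A}_{\lambda_j})=\chi_{[0,\varepsilon_i]}(\mathcal{A}_{\lambda_j})$. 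Substituting into \eqref{sfl} makes every summand for $\mathcal{A}^\delta$ equal to the corresponding one for $\mathcal{A}$, which yields $\sfl(\mathcal{A})=\sfl(\mathcal{A}^\delta)$.

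The one step that is not purely formal is the uniform lower bound $\rho_i$ for $\operatorname{dist}(\pm\varepsilon_i,\sigma(\mathcal{A}_\lambda))$ on the closed subintervals, i.e.\ the fact that gap-continuity of $\mathcal{A}$ entails norm-continuity of the resolvents $(\mathcal{A}_\lambda\mp\varepsilon_i I_H)^{-1}$ there (cf.\ \cite{Kato}); this is precisely what keeps us from having to shrink the $\varepsilon_i$ after the perturbation, and everything else is bookkeeping with the spectral theorem and the definition \eqref{sfl}. Finally I would point out that the sign $\delta>0$ is essential and reflects the half-open convention $[0,\varepsilon]$ in \eqref{sfl}: at a partition point with $0\in\sigma(\mathcal{A}_{\lambda_j})$ the kernel of $\mathcal{A}_{\lambda_j}$ still lies inside $\im\chi_{[0,\varepsilon_i]}(\mathcal{A}^\delta_{\lambda_j})$ exactly because the spectrum is shifted to the right, whereas for $\delta<0$ it would fall out and the identity would break down.
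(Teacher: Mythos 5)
Your proof is correct and follows essentially the same strategy as the paper's: fix the partition and the $\varepsilon_i$ used to compute $\sfl(\mathcal{A})$, use $\sigma(\mathcal{A}^\delta_\lambda)=\sigma(\mathcal{A}_\lambda)+\delta$, and choose $\delta>0$ small enough that none of $\pm\varepsilon_i$ are crossed on $[\lambda_{i-1},\lambda_i]$ and that $\sigma(\mathcal{A}_{\lambda_j})\cap[-\delta,0)=\emptyset$, whereupon the spectral-projection dimensions in \eqref{sfl} match term by term. Your added justification of the uniform spectral gap $\rho_i$ via norm-continuity of the resolvent is a detail the paper leaves implicit (it simply cites Kato and declares $\delta$ small enough), but the underlying argument is identical, including the closing observation about the necessity of $\delta>0$.
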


\begin{proof}
We note at first that the operators $\mathcal{A}^\delta_\lambda$ are in $\mathcal{CF}^\textup{sa}(H)$ for $\delta$ sufficiently small, and moreover the path $\mathcal{A}^\delta$ is gap-continuous by \cite[Thm. IV.2.17]{Kato}. Hence $\sfl(\mathcal{A}^\delta)$ is well defined.\\ 
Let now $0=\lambda_0<\ldots<\lambda_N=1$ be a partition of the unit interval and $\varepsilon_i>0$, $i=1,\ldots,N$, for the path $\mathcal{A}$ as in \eqref{sfl}. Note that $\sigma(\mathcal{A}^\delta_\lambda)=\sigma(\mathcal{A}_\lambda)+\delta$, $\lambda\in I$. We now let $\delta>0$ be so small that $\pm\varepsilon_i\notin\sigma(\mathcal{A}^{s\delta}_\lambda)$, $s\in I$, $\lambda\in[\lambda_{i-1},\lambda_i]$, for $i=1,\ldots,N$, and $\sigma(\mathcal{A}_{\lambda_i})\cap[-\delta,0)=\{0\}$ for $i=0,\ldots N$. Then

\[\dim(\im(\chi_{[0,\varepsilon_i]}(\mathcal{A}_{\lambda})))=\dim(\im(\chi_{[0,\varepsilon_i]}(\mathcal{A}^\delta_{\lambda}))),\quad \lambda=\lambda_{i-1},\lambda_i,\]
for $i=1,\ldots,N$, and the assertion is an immediate consequence of the definition \eqref{sfl}. 
\end{proof}
\noindent
Let us point out that it is important that $\delta$ is positive in the previous lemma. For negative $\delta$ the difference of the kernel dimensions of $\mathcal{A}_0$ and $\mathcal{A}_1$ appears as additional term, which is readily seen by a similar argument.\\
We will need below a characterisation of the spectral flow that is due to Lesch \cite{Lesch}. Let us denote by $\Omega(\mathcal{CF}^\textup{sa}(H),G\mathcal{C}^\textup{sa}(H))$ the set of all paths in $\mathcal{CF}^\textup{sa}(H)$ having invertible endpoints. Note that a selfadjoint Fredholm operator is invertible if and only if its kernel is trivial. Let $P_+,P_-$ and $P_0$ be three orthogonal projections in $H$ such that $P_+,P_-$ have infinite dimensional kernel and range, and $\dim(\im P_0)=1$. We also assume that these projections are complementary, which means that the products of each two of them vanish and that $P_++P_0+P_-$ is the identity $I_H$. Then 

\begin{align}\label{normalisationpath}
\mathcal{A}_\lambda=P_-+(\lambda-\frac{1}{2})P_0+P_+
\end{align}
is a path of bounded selfadjoint operators which is invertible as long as $\lambda\neq\frac{1}{2}$. For $\lambda=\frac{1}{2}$ the image of $P_0$ is the kernel and cokernel of $\mathcal{A}_{\frac{1}{2}}$ and so this operator is Fredholm. As the canonical inclusion of the bounded selfadjoint Fredholm operators $\mathcal{BF}^\textup{sa}(H)$ into $\mathcal{CF}^\textup{sa}(H)$ is continuous by \cite[Prop. 2.2]{Lesch}, we see that $\mathcal{A}_{nor}:=\{\mathcal{A}_\lambda\}_{\lambda\in I}$ is a path in $\mathcal{CF}^\textup{sa}(H)$. The reader will have no difficulty to see from \eqref{sfl} that $\sfl(\mathcal{A}_{nor})=1$. The following theorem was proved by Lesch in \cite{Lesch}. 

\begin{theorem}\label{LeschUniqueness}
Assume that $\mu:\Omega(\mathcal{CF}^\textup{sa}(H),G\mathcal{C}^\textup{sa}(H))\rightarrow\mathbb{Z}$ is a map that has the same properties (ii) and (iii) as the spectral flow. If $\mu(\mathcal{A}_{nor})=1$, then 

\[\mu=\sfl:\Omega(\mathcal{CF}^\textup{sa}(H),G\mathcal{C}^\textup{sa}(H))\rightarrow\mathbb{Z}.\]  
\end{theorem}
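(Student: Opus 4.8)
The plan is to compare $\mu$ with the spectral flow through their difference $\nu:=\mu-\sfl$. Since $\sfl$ itself has the properties (ii) and (iii) and $\sfl(\mathcal{A}_{nor})=1$, the hypotheses on $\mu$ give that $\nu$ is additive under concatenation, invariant under homotopies with invertible endpoints, and satisfies $\nu(\mathcal{A}_{nor})=0$. Everything below uses only these three facts, and it suffices to prove that $\nu$ vanishes on all of $\Omega(\mathcal{CF}^\textup{sa}(H),G\mathcal{C}^\textup{sa}(H))$.

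First I would record the purely formal consequences of (ii) and (iii). For the constant path $c_T$ at an invertible $T$ one has $c_T=c_T\ast c_T$ as a map of $I$, so (ii) forces $\nu(c_T)=0$. If $\mathcal{A}_\lambda$ is invertible for every $\lambda\in I$, then $h(s,\lambda)=\mathcal{A}_{s\lambda}$ is a homotopy with invertible endpoints from $c_{\mathcal{A}_0}$ to $\mathcal{A}$, so (iii) yields $\nu(\mathcal{A})=0$: the functional $\nu$ annihilates every path that runs through invertible operators. Finally, for the reversed path $\mathcal{A}^{\mathrm{rev}}_\lambda:=\mathcal{A}_{1-\lambda}$ the loop $\mathcal{A}\ast\mathcal{A}^{\mathrm{rev}}$ at $\mathcal{A}_0$ is null-homotopic rel its (invertible) basepoint, so (ii) and (iii) give $\nu(\mathcal{A})+\nu(\mathcal{A}^{\mathrm{rev}})=0$.

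Now fix a basepoint $T_0$ among the invertible selfadjoint Fredholm operators. I would use the topological structure of $\mathcal{CF}^\textup{sa}(H)$ in the gap topology — that it is path-connected, that its open subset of invertible operators is path-connected, and that $\pi_1(\mathcal{CF}^\textup{sa}(H),T_0)\cong\mathbb{Z}$ with the spectral flow furnishing the isomorphism — as follows. Given $\mathcal{A}\in\Omega(\mathcal{CF}^\textup{sa}(H),G\mathcal{C}^\textup{sa}(H))$, join $T_0$ to $\mathcal{A}_0$ and $\mathcal{A}_1$ to $T_0$ by paths $\mathcal{I}_0,\mathcal{I}_1$ through invertible operators; then $\mathcal{I}_0\ast\mathcal{A}\ast\mathcal{I}_1$ is a loop at $T_0$ and, by (ii) together with the vanishing of $\nu$ on invertible paths, $\nu(\mathcal{A})=\nu(\mathcal{I}_0\ast\mathcal{A}\ast\mathcal{I}_1)$. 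In the same way, closing up $\mathcal{A}_{nor}$ by an invertible arc between its two (invertible) endpoints and then transporting the basepoint to $T_0$ by one further invertible arc produces a loop $\widehat{\mathcal{A}}$ at $T_0$ with $\sfl(\widehat{\mathcal{A}})=\sfl(\mathcal{A}_{nor})=1$, and, by (ii) and the same vanishing, $\nu(\widehat{\mathcal{A}})=\nu(\mathcal{A}_{nor})=0$. By (ii) and (iii) the assignment $\mathcal{L}\mapsto\nu(\mathcal{L})$ on loops at $T_0$ is a group homomorphism $\pi_1(\mathcal{CF}^\textup{sa}(H),T_0)\to\mathbb{Z}$ (homotopies rel basepoint keep the endpoints equal to $T_0$, hence invertible). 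Since $\sfl$ identifies $\pi_1(\mathcal{CF}^\textup{sa}(H),T_0)$ with $\mathbb{Z}$, the class of $\widehat{\mathcal{A}}$ generates; as $\nu(\widehat{\mathcal{A}})=0$, the homomorphism is trivial, so $\nu$ vanishes on every loop at $T_0$, and therefore $\nu(\mathcal{A})=0$. This proves $\mu=\sfl$.

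The one substantial external ingredient, and hence the main obstacle, is the topology of $\mathcal{CF}^\textup{sa}(H)$ with the gap metric: path-connectedness of it and of its invertible part, and the identification of its fundamental group with $\mathbb{Z}$ via the spectral flow. These facts are delicate — they fail for bounded operators in the operator-norm topology, where $\mathcal{BF}^\textup{sa}(H)$ famously splits into three components, the point being that in the gap topology spectral subspaces can be pushed ``through infinity'' — and I would take them from \cite{UnbSpecFlow}. The remainder is bookkeeping with the three axioms (ii), (iii) and the normalisation $\nu(\mathcal{A}_{nor})=0$, where the only real care needed is that (iii) is assumed only for homotopies with invertible endpoints, which is exactly why every reduction above is arranged so that the endpoints stay invertible throughout.
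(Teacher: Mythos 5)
Your proof is correct. The paper itself gives no proof of this theorem — it simply cites Lesch's paper \cite{Lesch} — and your argument follows the standard route that Lesch's own proof takes: reduce to loops by closing up through invertibles, observe that the difference $\nu=\mu-\sfl$ descends to a homomorphism $\pi_1(\mathcal{CF}^{\textup{sa}}(H),T_0)\to\mathbb{Z}$, and kill it on a generator obtained from $\mathcal{A}_{nor}$. You correctly identify the one nontrivial external input, namely the topology of $\mathcal{CF}^{\textup{sa}}(H)$ in the gap metric (path-connectedness of the space and of its invertible part, and $\pi_1\cong\mathbb{Z}$ via the spectral flow), and you cite \cite{UnbSpecFlow} for it, which is the right source. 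The only point worth flagging explicitly — you gesture at it but do not spell it out — is why $G\mathcal{C}^{\textup{sa}}(H)$ is path-connected in the gap topology: this comes from the homeomorphism $T\mapsto T^{-1}$ onto the (norm-topology) space of bounded selfadjoint injective operators, which is path-connected precisely because the essential spectrum is allowed to pass through $0$; this is what fails in the norm topology on bounded selfadjoint Fredholm operators and is the reason the three-component picture of $\mathcal{BF}^{\textup{sa}}(H)$ collapses.
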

\noindent
The reader should not be puzzled that the property (i) is not mentioned in Theorem \ref{LeschUniqueness}, as it follows from (ii) and (iii). Indeed, it is readily seen from (ii) that the spectral flow of a constant path vanishes. As every path of invertible operators can be contracted to a point by a homotopy of invertible operators, (i) now follows from (iii).\\
Finally, let us consider the case of a path $\mathcal{A}$ in $\mathcal{CF}^\textup{sa}(E)$, where $E$ is a real Hilbert space. In this case there are two ways to define the spectral flow of $\mathcal{A}$. Firstly, as we previously allowed our Hilbert spaces to be real or complex, we can use \eqref{sfl} as introduced above. Secondly, we can consider the complexification $E^\mathbb{C}=E+iE$ of $E$ which is canonically a complex Hilbert space. The complexified operators $\mathcal{A}^\mathbb{C}_\lambda$ are in $\mathcal{CF}^\textup{sa}(E^\mathbb{C})$, and so the spectral flow of the complexified path $\mathcal{A}^\mathbb{C}=\{\mathcal{A}^\mathbb{C}_\lambda\}_{\lambda\in I}$ is defined as well. As the complex dimensions of eigenspaces of $\mathcal{A}^\mathbb{C}_\lambda$ is equal to the real dimension of the eigenspaces of $\mathcal{A}_\lambda$, we see from \eqref{sfl} that

\begin{align}\label{complexification}
\sfl(\mathcal{A})=\sfl(\mathcal{A}^\mathbb{C}).
\end{align} 
Even though the operators for studying the equations \eqref{Hamiltonian} are defined in real Hilbert spaces, one of our topological constructions below requires operators in complex Hilbert spaces. The obtained equation \eqref{complexification} will become important in that step.






\section{Theorem B}
We now have recalled all necessary preliminaries for discussing Theorem B in detail. Let us point out once again that Theorem C, which we prove in the following section, is independent of Theorem B.

\subsection{Setting and Statement of the Theorem}
Let $(E,\omega)$ be a symplectic Hilbert space and $\{(\Lambda_0(\lambda),\Lambda_1(\lambda))\}_{\lambda\in I}$ a path in $\mathcal{FL}^2(E,\omega)$. As before, we let $J:E\rightarrow E$ be the almost complex structure induced by $\omega$ and assume that $J$ is compatible with the scalar product of $E$, i.e. $J^2=-I_E$ and $J^T=-J$. Now we consider for $a,b\in\mathbb{R}$, $a<b$, the differential operators

\begin{align}\label{Q}
\mathcal{Q}_\lambda:\mathcal{D}(\mathcal{Q}_\lambda)\subset L^2([a,b],E)\rightarrow L^2([a,b],E),\quad \mathcal{Q}_\lambda u=Ju',
\end{align}
where

\[\mathcal{D}(\mathcal{Q}_\lambda)=\{u\in H^1([a,b],E):\, u(a)\in\Lambda_0(\lambda), u(b)\in\Lambda_1(\lambda)\}.\]
Our first aim is to show that the Fredholm and Lagrangian properties of $(\Lambda_0(\lambda),\Lambda_1(\lambda))$ are strictly related to the Fredholmness and selfadjointness of $\mathcal{Q}_\lambda$.

\begin{lemma}\label{lemma-selfFredIFF}
The operator $\mathcal{Q}_\lambda$ belongs to $\mathcal{CF}^\textup{sa}(L^2([a,b],E))$ if and only if 

\[(\Lambda_0(\lambda),\Lambda_1(\lambda))\in\mathcal{FL}^2(E,\omega).\]
\end{lemma}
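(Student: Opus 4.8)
The plan is to prove both implications by relating the basic analytic objects attached to $\mathcal{Q}_\lambda$ — its kernel, its range, and its adjoint — to the symplectic-linear-algebra data of the pair $(\Lambda_0(\lambda),\Lambda_1(\lambda))$. Fix $\lambda$ and write $\Lambda_0=\Lambda_0(\lambda)$, $\Lambda_1=\Lambda_1(\lambda)$. First I would compute $\ker\mathcal{Q}_\lambda$: since $Ju'=0$ and $J$ is invertible, $u$ is constant, so $\ker\mathcal{Q}_\lambda=\{u\equiv c:\ c\in\Lambda_0\cap\Lambda_1\}$, which has dimension $\dim(\Lambda_0\cap\Lambda_1)$. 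Next I would identify the range: $f\in\im\mathcal{Q}_\lambda$ iff there is $u\in H^1$ with $Ju'=f$, i.e. $u(t)=u(a)-J\int_a^t f(s)\,ds$, subject to $u(a)\in\Lambda_0$ and $u(b)=u(a)-J\int_a^b f\in\Lambda_1$. So solvability is exactly the condition that $J\int_a^b f$ lie in $\Lambda_0+\Lambda_1$ — more precisely the affine condition $u(a)\in\Lambda_0$ and $u(a)-J\int_a^b f\in\Lambda_1$ is solvable for $u(a)$ iff $J\int_a^b f\in \Lambda_0+\Lambda_1$. Hence the cokernel of $\mathcal{Q}_\lambda$, cut out by the single vector-valued linear functional $f\mapsto J\int_a^b f\bmod(\Lambda_0+\Lambda_1)$, has dimension $\operatorname{codim}(\Lambda_0+\Lambda_1)$. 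This already shows: $\mathcal{Q}_\lambda$ is Fredholm $\iff$ $(\Lambda_0,\Lambda_1)$ is a Fredholm pair, and in that case $\operatorname{ind}\mathcal{Q}_\lambda=\dim(\Lambda_0\cap\Lambda_1)-\operatorname{codim}(\Lambda_0+\Lambda_1)=\operatorname{ind}(\Lambda_0,\Lambda_1)=0$ by the Lagrangian identity \eqref{Lperp=JL}/\eqref{ind=0Lagrangian}.

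It remains to pin down selfadjointness. For the "only if" direction, if $\mathcal{Q}_\lambda\in\mathcal{CF}^{\textup{sa}}$ then in particular it is symmetric; integrating by parts, $\langle \mathcal{Q}_\lambda u,v\rangle-\langle u,\mathcal{Q}_\lambda v\rangle=\langle Ju(b),v(b)\rangle-\langle Ju(a),v(a)\rangle=\omega(u(b),v(b))-\omega(u(a),v(a))$, and symmetry on the domain forces $\omega$ to vanish on $\Lambda_0\times\Lambda_0$ and on $\Lambda_1\times\Lambda_1$, i.e. $\Lambda_i\subset\Lambda_i^\circ$; combined with $\mathcal{Q}_\lambda$ Fredholm (hence the Fredholm-pair/closedness conclusions above) one upgrades the isotropic inclusions $\Lambda_i\subset\Lambda_i^\circ$ to equality using a dimension/codimension count (the annihilator $\Lambda_i^\circ=J(\Lambda_i^\perp)$, so $\Lambda_i=\Lambda_i^\circ$ is forced once $\Lambda_i$ is isotropic and closed with the appropriate Fredholm property of the pair). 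For the "if" direction, assume $(\Lambda_0,\Lambda_1)\in\mathcal{FL}^2(E,\omega)$. Then by the boundary-term computation and \eqref{Lperp=JL} (Lagrangian $\Rightarrow$ isotropic) $\mathcal{Q}_\lambda$ is symmetric, and by the range computation it is Fredholm of index $0$; Lemma \ref{symmetric-selfadjoint} then immediately yields $\mathcal{Q}_\lambda\in\mathcal{CF}^{\textup{sa}}(L^2([a,b],E))$. I should also check density of $\mathcal{D}(\mathcal{Q}_\lambda)$ in $L^2$, which is clear since it contains $C_c^\infty((a,b),E)$.

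The step I expect to be the main obstacle is the "only if" direction — getting from "$\mathcal{Q}_\lambda$ symmetric and Fredholm" back to "each $\Lambda_i$ is Lagrangian." Symmetry only gives isotropy ($\Lambda_i\subset\Lambda_i^\circ$), and one must rule out the possibility that some $\Lambda_i$ is strictly isotropic. The clean way is to argue by contraposition through the adjoint: if, say, $\Lambda_0$ is not Lagrangian, then $\mathcal{D}(\mathcal{Q}_\lambda)$ is a proper subspace of $\{u\in H^1:\ u(a)\in\Lambda_0^\circ,\ u(b)\in\Lambda_1^\circ\}=\mathcal{D}(\mathcal{Q}_\lambda^\ast)$ (one verifies this formula for the adjoint directly from the integration-by-parts identity), so $\mathcal{Q}_\lambda\subsetneq\mathcal{Q}_\lambda^\ast$ and $\mathcal{Q}_\lambda$ is not selfadjoint; alternatively a strictly isotropic $\Lambda_i$ makes the pair fail the index-$0$ identity and the Fredholm count above breaks. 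Either route needs a little care about closedness of sums and the relation $\Lambda_i^\circ=J(\Lambda_i^\perp)$, but no genuinely hard analysis — the heart of the lemma is the explicit solution formula $u(t)=u(a)-J\int_a^t f$ together with the boundary-term identity $\langle\mathcal{Q}_\lambda u,v\rangle-\langle u,\mathcal{Q}_\lambda v\rangle=\omega(u(b),v(b))-\omega(u(a),v(a))$.
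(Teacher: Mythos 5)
Your proposal follows essentially the same route as the paper: compute the kernel (constants valued in $\Lambda_0\cap\Lambda_1$), compute the range via the explicit solution formula (solvability iff $J\int_a^b f\in\Lambda_0+\Lambda_1$), conclude Fredholm iff Fredholm pair with index $0$ by \eqref{ind=0Lagrangian}, read off symmetry from the integration-by-parts boundary term, and invoke Lemma~\ref{symmetric-selfadjoint} for the ``if'' direction. Two remarks on where you diverge from the paper's write-up.

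On the ``only if'' direction you are actually \emph{more} careful than the paper. As you correctly observe, vanishing of the boundary term on $\mathcal{D}(\mathcal{Q}_\lambda)\times\mathcal{D}(\mathcal{Q}_\lambda)$ only yields the isotropy inclusions $J\Lambda_i\subset\Lambda_i^\perp$ (equivalently $\Lambda_i\subset\Lambda_i^\circ$), not the Lagrangian equalities. The paper's proof states ``which means that $J\Lambda_1(\lambda)=\Lambda_1(\lambda)^\perp$ and $J\Lambda_0(\lambda)=\Lambda_0(\lambda)^\perp$'' at this point, which is an overstatement of what symmetry alone gives. Your adjoint-domain argument, $\mathcal{D}(\mathcal{Q}_\lambda^\ast)=\{u\in H^1:\ u(a)\in\Lambda_0^\circ,\ u(b)\in\Lambda_1^\circ\}$, cleanly closes this gap: selfadjointness forces $\mathcal{D}(\mathcal{Q}_\lambda)=\mathcal{D}(\mathcal{Q}_\lambda^\ast)$ and hence $\Lambda_i=\Lambda_i^\circ$. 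Your alternative index-count route also works (the Fredholm pair of strictly isotropic closed subspaces has index strictly less than $0$, contradicting selfadjoint Fredholm). Either is a genuine improvement on the presentation.

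The one genuine gap in your proposal is that you never verify that $\mathcal{Q}_\lambda$ is a \emph{closed} operator when $\Lambda_0(\lambda)$ and $\Lambda_1(\lambda)$ are closed subspaces of $E$. This cannot be skipped: Fredholmness is by definition a property of closed operators, and Lemma~\ref{symmetric-selfadjoint} requires $T\in\mathcal{CF}_0(H)$, which presupposes closedness. Density of $\mathcal{D}(\mathcal{Q}_\lambda)$, which you do check, does not help here. The paper devotes a full paragraph to this: taking $u_n\to u$ in $L^2$ with $\mathcal{Q}_\lambda u_n\to v$, one uses the representation $u_n(t)=u_n(a)+\int_a^t u_n'(s)\,ds$ to show $\{u_n(a)\}$ is Cauchy, deduces that $u_n\to w$ uniformly where $w(t)=x-\int_a^t Jv(s)\,ds$, and then concludes $u=w\in\mathcal{D}(\mathcal{Q}_\lambda)$ with $\mathcal{Q}_\lambda u=v$ using that $\Lambda_0(\lambda),\Lambda_1(\lambda)$ are closed. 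You should include this step before applying Lemma~\ref{symmetric-selfadjoint}.
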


\begin{proof}
We begin by examining the kernel and cokernel of $\mathcal{Q}_\lambda$. Obviously, the kernel of $\mathcal{Q}_\lambda$ is isomorphic to $\Lambda_0(\lambda)\cap\Lambda_1(\lambda)$. For studying the cokernel, we note at first that for $w\in L^2([a,b],E)$ the functions

\[u(t)=-J\int^t_a{w(s)\,ds}+c,\, t\in [a,b],\quad c\in E,\]
are in $H^1([a,b],E)$ and satisfy $Ju'(t)=w(t)$, $t\in [a,b]$. Now $u$ belongs to $\mathcal{D}(\mathcal{Q}_\lambda)$ if and only if

\begin{align}\label{BC}
\begin{split}
u(a)&=c\in\Lambda_0(\lambda),\\
u(b)&=-J\int^b_a{w(s)\,ds}+c\in\Lambda_1(\lambda).
\end{split}
\end{align} 
By writing 

\[w(t)=\int^b_a{w(s)\,ds}+(w(t)-\int^b_a{w(s)\,ds}),\]
it is clear that we have a decomposition

\begin{align*}
L^2([a,b],E)=U\oplus V
\end{align*}
into closed subspaces, where $U\cong E$ denotes the space of constant functions and 

\[V=\left\{w\in L^2([a,b],E):\,\int^b_a{w(s)\,ds}=0\right\}.\]
We first note that \eqref{BC} holds for any $w\in V$ just by setting $c=0$. If, however, $w\in U$, then \eqref{BC} holds if and only if there exists $c\in\Lambda_0(\lambda)$ such that $v+c\in\Lambda_1(\lambda)$, where $v:=-J(b-a)w$. Now the latter assertion is true if and only if $v\in\Lambda_0(\lambda)+\Lambda_1(\lambda)$, and so the range of $\mathcal{Q}_\lambda$ is $J(\Lambda_0(\lambda)+\Lambda_1(\lambda))\oplus V$. In summary, the kernel and cokernel of $\mathcal{Q}_\lambda$ are of finite dimension if and only if $(\Lambda_0(\lambda),\Lambda_1(\lambda))$ is a Fredholm pair.\\ 
Next, we note that for $u,v\in\mathcal{D}(\mathcal{Q}_\lambda)$

\begin{align*}
\langle\mathcal{Q}_\lambda u,v\rangle_{L^2([a,b],E)}&=\int^b_a{\langle Ju'(t),v(t)\rangle dt}=\langle Ju(b),v(b)\rangle-\langle Ju(a),v(a)\rangle+\int^b_a\langle u(t),Jv'(t)\rangle dt.
\end{align*}
The right hand side of this equation is equal to $\langle u,\mathcal{Q}_\lambda v\rangle_{L^2([a,b],E)}$ for all $u,v\in\mathcal{D}(\mathcal{Q}_\lambda)$ if and only if

\[\langle Jx,y\rangle=\langle J\tilde{x},\tilde{y}\rangle=0,\text{ for all } x,y\in\Lambda_0(\lambda),\, \tilde{x},\tilde{y}\in\Lambda_1(\lambda),\]
which means that $J\Lambda_1(\lambda)=\Lambda_1(\lambda)^\perp$ and $J\Lambda_0(\lambda)=\Lambda_0(\lambda)^\perp$. Hence, by \eqref{Lperp=JL}, $\mathcal{Q}_\lambda$ is symmetric if and only if $\Lambda_0(\lambda)$ and $\Lambda_1(\lambda)$ are Lagrangian.\\
Let us point out that we now have already shown that $(\Lambda_0(\lambda),\Lambda_1(\lambda))\in\mathcal{FL}^2(E,\omega)$ if $\mathcal{Q}_\lambda\in\mathcal{CF}^\textup{sa}(L^2([a,b],E))$.\\
As next step, let us briefly explain why $\mathcal{Q}_\lambda$ is closed if $\Lambda_0(\lambda),\Lambda_1(\lambda)\in\mathcal{G}(E)$. We assume that $\{u_n\}_{n\in\mathbb{N}}\subset\mathcal{D}(\mathcal{Q}_\lambda)$ is a sequence and $u,v\in L^2([a,b],E)$ are such that $u_n\rightarrow u$ and $\mathcal{Q}_\lambda u_n\rightarrow v$. We need to show that $u\in\mathcal{D}(\mathcal{Q}_\lambda)$ and $\mathcal{Q}_\lambda u=v$. Using that

\begin{align}\label{urep}
u_n(t)=u_n(a)+\int^t_a{u'_n(s)\,ds},\quad t\in[a,b],\, n\in\mathbb{N},
\end{align} 
it is easily seen that

\[\|u_n(a)-u_m(a)\|\leq\|u_n-u_m\|_{L^2([a,b],E)}+\|u'_n-u'_m\|_{L^2([a,b],E)},\]
and so $\{u_n(a)\}_{n\in\mathbb{N}}$ is a Cauchy sequence in $E$ converging to some $x\in E$. We set

\begin{align}\label{w}
w(t)=x-\int^t_a{Jv(s)\, ds},\quad t\in[a,b],
\end{align}
and see from \eqref{urep} that 

\begin{align}\label{uniforminequ}
\|u_n(t)-w(t)\|\leq\|u_n(a)-x\|+\|u'_n-v\|_{L^2([a,b],E)},\, t\in[a,b],
\end{align}
which clearly shows that $\{u_n\}_{n\in\mathbb{N}}$ converges to $w$ in $L^2([a,b],E)$, and so $u$ is given by \eqref{w}.\\
Now we note at first that, by \eqref{uniforminequ}, $u(a)\in\Lambda_0(\lambda)$ and $u(b)\in\Lambda_1(\lambda)$ as $u_n(a)\in\Lambda_0(\lambda)$, $u_n(b)\in\Lambda_1(\lambda)$, $n\in\mathbb{N}$, and these spaces are closed. This shows that $u\in\mathcal{D}(\mathcal{Q}_\lambda)$. Secondly, we see from \eqref{w} that $\mathcal{Q}_\lambda u=v$. Hence the operator $\mathcal{Q}_\lambda$ is closed, i.e. $\mathcal{Q}_\lambda\in\mathcal{C}(L^2([a,b],E)))$.\\  
Finally, we just need to recall that we computed the kernel and range of $\mathcal{Q}_\lambda$ in the first step of the proof, which now shows that $\mathcal{Q}_\lambda$ is Fredholm if $(\Lambda_0(\lambda),\Lambda_1(\lambda))$ is a Fredholm pair. Moreover, if $(\Lambda_0(\lambda),\Lambda_1(\lambda))\in\mathcal{FL}^2(E,\omega)$, then, by \eqref{ind=0Lagrangian},

\begin{align*}
\ind(\mathcal{Q}_\lambda)=\dim(\Lambda_0(\lambda)\cap\Lambda_1(\lambda))-\codim(\Lambda_0(\lambda)+\Lambda_1(\lambda))=0.
\end{align*}
Therefore, $\mathcal{Q}_\lambda\in\mathcal{CF}^\textup{sa}(L^2([a,b],E))$ by Lemma \ref{symmetric-selfadjoint}.
\end{proof}
\noindent
By the previous lemma, the operators $\mathcal{Q}_\lambda$ are in $\mathcal{CF}^\textup{sa}(L^2([a,b],E))$. Our Theorem B now shows that these operators actually define a path in $\mathcal{CF}^\textup{sa}(L^2([a,b],E))$ whose spectral flow can be computed by the Maslov index. Let us recall from the introduction that we call a path $\{(\Lambda_0(\lambda),\Lambda_1(\lambda))\}_{\lambda\in I}$ in $\mathcal{FL}^2(E,\omega)$ admissible if $\Lambda_0(0)\cap\Lambda_1(0)=\Lambda_0(1)\cap\Lambda_1(1)=\{0\}$.

\begin{theoremB}\label{main}
Let $\{(\Lambda_0(\lambda),\Lambda_1(\lambda))\}_{\lambda\in I}$ be an admissible path in $\mathcal{FL}^2(E,\omega)$. Then the path $\mathcal{Q}$ in \eqref{Q} is continuous in $\mathcal{CF}^\textup{sa}(L^2([a,b],E))$ and

\[\sfl(\mathcal{Q})=\mu_{Mas}(\Lambda_0(\cdot),\Lambda_1(\cdot)).\]    
\end{theoremB}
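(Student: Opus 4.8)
The plan is to reduce Theorem B to a statement that can be verified by a combination of homotopy invariance of both sides, the uniqueness characterization of the spectral flow (Theorem \ref{LeschUniqueness}), and a direct computation on a normalizing family. First I would establish the continuity claim: that $\lambda\mapsto\mathcal{Q}_\lambda$ is continuous in the gap metric on $\mathcal{CF}^\textup{sa}(L^2([a,b],E))$. Since the domains $\mathcal{D}(\mathcal{Q}_\lambda)$ vary with $\lambda$, this is not automatic. The natural approach is to trivialize the family of domains: using that $\lambda\mapsto(\Lambda_0(\lambda),\Lambda_1(\lambda))$ is gap-continuous in $E\times E$, one can build (locally in $\lambda$, by the usual argument with orthogonal projections onto closed subspaces that are close in gap) a continuous family $M_\lambda\in GL(L^2([a,b],E))$ — for instance multiplication by a path of invertible operators on $E$ interpolating boundary conditions, or more robustly a family of the form "$u\mapsto$ (solution operator conjugation)" — such that $M_\lambda$ carries $\mathcal{D}(\mathcal{Q}_{\lambda_0})$ onto $\mathcal{D}(\mathcal{Q}_\lambda)$. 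Then $M_\lambda^{-1}\mathcal{Q}_\lambda M_\lambda$ is a family of operators with fixed domain $\mathcal{D}(\mathcal{Q}_{\lambda_0})=:W$, and one checks it is norm-continuous as a family in $\mathcal{L}(W,L^2)$; by Theorem \ref{Leschincl} it is then gap-continuous, and since conjugation by the gap-continuous family $M_\lambda$ preserves gap-continuity, $\mathcal{Q}_\lambda$ itself is gap-continuous. (Here Corollary \ref{cor-orthequ} guarantees we stay inside $\mathcal{CF}^\textup{sa}$.)

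For the equality $\sfl(\mathcal{Q})=\mu_{Mas}(\Lambda_0,\Lambda_1)$, I would argue as follows. Both $\sfl$ and $\mu_{Mas}$ are additive under concatenation (property (ii) of $\sfl$ and (ii') of $\mu_{Mas}$), vanish on paths with no intersections / no kernel (property (i), (i')), and are homotopy invariant rel endpoints with the intersection/kernel dimension fixed at the endpoints (property (iii), (iii')); moreover by Lemma \ref{lemma-selfFredIFF} the kernel of $\mathcal{Q}_\lambda$ is isomorphic to $\Lambda_0(\lambda)\cap\Lambda_1(\lambda)$, so "no kernel" corresponds exactly to "no intersection." Therefore the map that assigns to an admissible path $(\Lambda_0,\Lambda_1)$ the integer $\sfl(\mathcal{Q})-\mu_{Mas}(\Lambda_0,\Lambda_1)$ is a well-defined homomorphism on the relevant homotopy classes of admissible paths. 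Since by Theorem \ref{Furutani-iso} (for $E$ infinite-dimensional) the group of homotopy classes of admissible paths, identified via $\mu_{Mas}$, is cyclic and generated by any single loop of Maslov index $1$, it suffices to check the identity $\sfl(\mathcal{Q})=1$ on one explicitly chosen admissible path realizing $\mu_{Mas}=1$ (for the finite-dimensional case one splits off $\mathbb{R}^{2n}$ and invokes Cappell–Lee–Miller, or handles it separately as in \cite{WaterstraatHomoclinics}). For the generator I would take $\Lambda_0(\lambda)$ a fixed Lagrangian $W$ and $\Lambda_1(\lambda)=\exp(\lambda J_0\varphi)W$ a rotation in a single symplectic $2$-plane, reducing the spectral-flow computation for $\mathcal{Q}_\lambda$ to a $2\times2$ (or finite-dimensional) ODE eigenvalue problem, where one counts directly, via \eqref{sfl}, the eigenvalue of $J u' = \mu u$ crossing zero; this is essentially the finite-dimensional computation and gives $+1$.

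The main obstacle I expect is the continuity step — specifically, constructing the trivializing family $M_\lambda$ of invertible operators that simultaneously conjugates the varying boundary-condition domains to a fixed one while remaining continuous and invertible, and then verifying norm-continuity of the conjugated family in $\mathcal{L}(W,L^2)$. The gap-continuity of $(\Lambda_0(\lambda),\Lambda_1(\lambda))$ in $E\times E$ gives, for each $\lambda_0$, local near-identity isomorphisms of the relevant closed subspaces (via the standard estimate that subspaces close in gap are isomorphic through $P_{V}|_{U}$), but assembling these into one conjugating operator on $L^2([a,b],E)$ that respects both endpoint conditions — and checking it lands in $GL$ so that Corollary \ref{cor-orthequ} applies — requires care; an alternative is to use the solution operator of $Ju'=0$ (i.e. the constant-extension identification built in the proof of Lemma \ref{lemma-selfFredIFF}) to transport everything to the space of pairs $(u(a),u(b))$ and argue there. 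Once continuity is in hand, the homotopy/uniqueness reduction and the model computation are routine.
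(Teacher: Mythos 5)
Your argument for the spectral flow formula (the second half of Theorem B) follows the same route as the paper: both $\sfl(\mathcal{Q})$ and $\mu_{Mas}$ satisfy the concatenation, vanishing and homotopy axioms, so one reduces to verifying the identity on a single normalizing path, and your proposed generator (rotation of a fixed Lagrangian through a single symplectic $2$-plane) is precisely the paper's $\gamma_{nor}$ built from $\Psi_\lambda=\cos(\pi\lambda)I_E+\sin(\pi\lambda)J$ on a one-dimensional subspace. One step you gloss over, however, is the reduction from arbitrary admissible paths to loops: to invoke Theorem \ref{Furutani-iso} one must close up an admissible path by appending paths that stay inside the set of transversal pairs $\mathcal{FL}^2_0(E,\omega)$, and that this is possible requires the path-connectedness of $\mathcal{FL}^2_0(E,\omega)$, which the paper establishes separately (Lemma \ref{lema-FL0pathconnected}, via Piccione--Tausk) before proving the uniqueness statement (Proposition \ref{uniqueness}). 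You would need to supply this.

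For the continuity of $\mathcal{Q}$, your approach is genuinely different from the paper's. You propose conjugating by a family $M_\lambda\in GL(L^2([a,b],E))$ that carries the fixed domain $\mathcal{D}(\mathcal{Q}_{\lambda_0})$ onto $\mathcal{D}(\mathcal{Q}_\lambda)$, then combining Theorem \ref{Leschincl} with Lemma \ref{gap-product}. That can be made to work locally (e.g.\ $M_\lambda$ acting pointwise by the interpolant $(1-s(t))T_\lambda+s(t)S_\lambda$ for near-identity $T_\lambda,S_\lambda\in GL(E)$ carrying $\Lambda_0(\lambda_0)$ and $\Lambda_1(\lambda_0)$ to $\Lambda_0(\lambda)$ and $\Lambda_1(\lambda)$), but you must check invertibility of the interpolant, replace $M_\lambda^{-1}\mathcal{Q}_\lambda M_\lambda$ by $M_\lambda^T\mathcal{Q}_\lambda M_\lambda$ so that Corollary \ref{cor-orthequ} and Theorem \ref{Leschincl} apply, and then undo the conjugation via Lemma \ref{gap-product}. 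The paper instead estimates $d_G(\mathcal{Q}_\lambda,\mathcal{Q}_{\lambda_0})$ directly, bypassing invertibility altogether: it uses the \emph{non}-invertible map $P_{\lambda_0}$ that corrects a function only at its endpoint values via the orthogonal projections onto $\Lambda_0(\lambda_0)$ and $\Lambda_1(\lambda_0)$, bounds $\delta(\gra\mathcal{Q}_\lambda,\gra\mathcal{Q}_{\lambda_0})$ via the trace inequality for $H^1$, and controls the resulting quantity by $\|\hat P_\lambda-\hat P_{\lambda_0}\|+\|\tilde P_\lambda-\tilde P_{\lambda_0}\|$ using Kato's identity for gap distances of projections. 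The paper's route is simpler and avoids the invertibility and domain-mapping questions that are the main technical risk in yours; your route, if pushed through, has the conceptual advantage of reducing the varying-domain problem to the fixed-domain case where continuity is a standard norm estimate.
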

\noindent
Let us point out that the proof of the first assertion in the previous theorem does not make use of the assumption that the path in $\mathcal{FL}^2(E,\omega)$ is admissible. Finally, let us mention once again that Theorem B generalises Theorem 0.4 of \cite{Cappell} for admissible paths from $\mathbb{R}^{2n}$ to Hilbert spaces $E$.


\subsection{Proof of Theorem B}
We divide the proof into two parts and show at first that the path $\mathcal{Q}=\{\mathcal{Q}_\lambda\}_{\lambda\in I}$ is gap-continuous. In the second step, we prove the equality of the spectral flow and the Maslov index. Throughout the proof, we abbreviate $L^2([a,b],E)$ by $L^2$ and $H^1([a,b],E)$ by $H^1$. Moreover, we simplify the presentation of the proof by setting $a=0$ and $b=1$, which clearly is no loss of generality.

\subsubsection*{Step 1: $\mathcal{Q}$ is Gap-Continuous}
We summarise at first some facts about the gap-metric $d_G$ from \cite{Kato}, which we introduced in \eqref{gap-subspaces}. An exhaustive treatment can also be found in \cite[\S 2.4]{thesis}. Let $M,N\subset E$ be two closed subspaces such that $M,N\neq\{0\}$. We set

\[\delta(M,N)=\sup_{u\in S_M}{d(u,N)},\]
where $S_M$ denotes the unit sphere in $M$ and $d(u,N)=\inf_{v\in N}\|u-v\|$. By \cite[p.198]{Kato},

\begin{align}\label{dG}
d_G(M,N)=\|P_M-P_N\|=\max\{\delta(M,N),\delta(N,M)\},
\end{align}
where $P_M$ and $P_N$ denote the orthogonal projections onto $M$ and $N$, respectively.\\
Let us now consider $d_G(\mathcal{Q}_\lambda,\mathcal{Q}_{\lambda_0})$ for some $\lambda,\lambda_0\in I$ (see \eqref{gap operators}). Following \eqref{dG}, we focus on 

\[\delta(\gra(\mathcal{Q}_\lambda),\gra(\mathcal{Q}_{\lambda_0}))\]
and note at first that for $u\in\mathcal{D}(\mathcal{Q}_\lambda)$ and $v\in\mathcal{D}(\mathcal{Q}_{\lambda_0})$

\begin{align}\label{gapcontI}
\begin{split}
\|(u,\mathcal{Q}_\lambda u)-(v,\mathcal{Q}_{\lambda_0}v)\|_{L^2\oplus L^2}&=\|(u-v,J(u'-v'))\|_{L^2\oplus L^2}\\
&\leq \left(\|u-v\|^2_{L^2}+\|J\|\|u'-v'\|^2_{L^2}\right)^\frac{1}{2}\\
&=\|u-v\|_{H^1},
\end{split}
\end{align}
where we have used that $\|J\|=1$. As we assume the continuity of the pair $\{(\Lambda_0(\lambda),\Lambda_1(\lambda))\}_{\lambda\in I}$ in $\mathcal{FL}^2(E,\omega)$, there are two families of orthogonal projections $\hat{P},\tilde{P}:I\rightarrow\mathcal{L}(E)$ such that 

\[\im(\hat{P}_\lambda)=\Lambda_0(\lambda),\quad \im(\tilde{P}_\lambda)=\Lambda_1(\lambda),\quad\lambda\in I.\]
We now set for $w\in H^1$

\[(P_\lambda w)(t)=w(t)-(1-t)(I_E-\hat{P}_\lambda)w(0)-t(I_E-\tilde{P}_\lambda)w(1),\]
and note that it is readily seen that $P^2_\lambda w=P_\lambda w$, and $P_\lambda w\in\mathcal{D}(\mathcal{Q}_\lambda)$ for all $w\in H^1$ and $\lambda\in I$. Hence

\begin{align}\label{gapcontII}
\inf_{v\in\mathcal{D}(\mathcal{Q}_{\lambda_0})}\|u-v\|_{H^1}\leq\|u-P_{\lambda_0}u\|_{H^1}.
\end{align}
As 

\[u(t)-(P_{\lambda_0}u)(t)=(1-t)(I_E-\hat{P}_{\lambda_0})(u(0))+t(I_E-\tilde{P}_{\lambda_0})(u(1)),\]
we obtain for $u\in\mathcal{D}(\mathcal{Q}_\lambda)$

\begin{align}\label{gapcontIII}
\begin{split}
\|u-P_{\lambda_0}u\|_{H^1}&\leq 2(\|(I_E-\hat{P}_{\lambda_0})(u(0))\|+\|(I_E-\tilde{P}_{\lambda_0})(u(1))\|)\\
&= 2(\|(I_E-\hat{P}_{\lambda_0})\hat{P}_\lambda(u(0))\|+\|(I_E-\tilde{P}_{\lambda_0})\tilde{P}_\lambda(u(1))\|)\\
&\leq 2(\|(I_E-\hat{P}_{\lambda_0})\hat{P}_\lambda\|\|u(0)\|+\|(I_E-\tilde{P}_{\lambda_0})\tilde{P}_\lambda\|\|u(1)\|),
\end{split}
\end{align}
where we have used that $\hat{P}_\lambda(u(0))=u(0)$ and $\tilde{P}_\lambda(u(1))=u(1)$ as $u\in\mathcal{D}(\mathcal{Q}_\lambda)$. Since the point evaluation is continuous in $H^1$, there exists a constant $c>0$ such that for $t=0$ and $t=1$

\begin{align}\label{gapcontIV}
\|u(t)\|\leq c\|u\|_{H^1}=c\left(\|u\|^2_{L^2}+\|u'\|^2_{L^2}\right)^\frac{1}{2}=c\left(\|u\|^2_{L^2}+\|Ju'\|^2_{L^2}\right)^\frac{1}{2},
\end{align}
where we use that $\|Jx\|=\|x\|$ for every $x\in E$. Hence, by \eqref{gapcontI}--\eqref{gapcontIV},

\begin{align*}
d((u,\mathcal{Q}_\lambda u),\gra(\mathcal{Q}_{\lambda_0}))&=\inf_{v\in\mathcal{D}(\mathcal{Q}_{\lambda_0})}\|(u,\mathcal{Q}_\lambda u)-(v,\mathcal{Q}_{\lambda_0}v)\|_{L^2\oplus L^2}\\
&\leq \inf_{v\in\mathcal{D}(\mathcal{Q}_{\lambda_0})}\|u-v\|_{H^1}\leq\|u-P_{\lambda_0}u\|_{H^1}\\
&\leq 2(\|(I_E-\hat{P}_{\lambda_0})\hat{P}_\lambda\|\|u(0)\|+\|(I_E-\tilde{P}_{\lambda_0})\tilde{P}_\lambda\|\|u(1)\|)\\
&\leq 2c(\|(I_E-\hat{P}_{\lambda_0})\hat{P}_\lambda\|+\|(I_E-\tilde{P}_{\lambda_0})\tilde{P}_\lambda\|)(\|u\|^2_{L^2}+\|Ju'\|^2_{L^2})^\frac{1}{2}. 
\end{align*}
As the unit sphere in $\gra(\mathcal{Q}_\lambda)$ is given by 

\[\{(u,\mathcal{Q}_\lambda u):\, u\in\mathcal{D}(\mathcal{Q}_\lambda),\, \|u\|^2_{L^2}+\|Ju'\|^2_{L^2}=1\},\]
we finally get

\begin{align}\label{gapfinalI}
\begin{split}
\delta(\gra(\mathcal{Q}_\lambda),\gra(\mathcal{Q}_{\lambda_0}))&=\sup_{{{u\in\mathcal{D}(\mathcal{Q}_\lambda)}\atop{\|u\|^2+\|Ju'\|^2=1}}}d((u,\mathcal{Q}_\lambda u),\gra(\mathcal{Q}_{\lambda_0}))\\
&\leq 2c(\|(I_E-\hat{P}_{\lambda_0})\hat{P}_\lambda\|+\|(I_E-\tilde{P}_{\lambda_0})\tilde{P}_\lambda\|).
\end{split}
\end{align}
Note that if we swap $\lambda$ and $\lambda_0$ and repeat the above argument, we likewise have

\begin{align}\label{gapfinalII}
\delta(\gra(\mathcal{Q}_{\lambda_0}),\gra(\mathcal{Q}_{\lambda}))\leq 2c(\|(I_E-\hat{P}_{\lambda})\hat{P}_{\lambda_0}\|+\|(I_E-\tilde{P}_{\lambda})\tilde{P}_{\lambda_0}\|).
\end{align}
By \cite[I.6.34]{Kato}, given two orthogonal projections $P,Q$ in $E$, if 

\[\|(I_E-P)Q\|<1\,\,\text{and  } \|(I_E-Q)P\|<1,\]
then 

\[\|(I_E-P)Q\|=\|(I_E-Q)P\|=\|P-Q\|.\]
Hence, as $(I_E-\hat{P}_{\lambda})\hat{P}_{\lambda_0}=(I_E-\tilde{P}_{\lambda})\tilde{P}_{\lambda_0}=0$ for $\lambda=\lambda_0$, we have for all $\lambda$ in a neighbourhood of $\lambda_0$

\[\|(I_E-\hat{P}_{\lambda})\hat{P}_{\lambda_0}\|=\|(I_E-\hat{P}_{\lambda_0})\hat{P}_{\lambda}\|=\|\hat{P}_{\lambda}-\hat{P}_{\lambda_0}\|\]
and
\[\|(I_E-\tilde{P}_{\lambda})\tilde{P}_{\lambda_0}\|=\|(I_E-\tilde{P}_{\lambda_0})\tilde{P}_{\lambda}\|=\|\tilde{P}_\lambda-\tilde{P}_{\lambda_0}\|.\]
Consequently, we obtain from \eqref{dG}, \eqref{gapfinalI} and \eqref{gapfinalII} for all $\lambda$ sufficiently close to $\lambda_0$

\begin{align*}
d_G(\mathcal{Q}_\lambda,\mathcal{Q}_{\lambda_0})&=\max\{\delta(\gra(\mathcal{Q}_\lambda),\gra(\mathcal{Q}_{\lambda_0})),\delta(\gra(\mathcal{Q}_{\lambda_0}),\gra(\mathcal{Q}_{\lambda}))\}\\
&\leq 2c(\|\hat{P}_\lambda-\hat{P}_{\lambda_0}\|+\|\tilde{P}_\lambda-\tilde{P}_{\lambda_0}\|),
\end{align*}
which shows that $\mathcal{Q}=\{\mathcal{Q}_\lambda\}_{\lambda\in I}$ is indeed continuous in $\mathcal{CF}(L^2([a,b],E))$.


\subsubsection*{Step 2: The Spectral Flow Formula}
We assume in this step of our proof that $E$ is of infinite dimension so that we can apply Theorem \ref{Furutani-iso}. As the finite dimensional case was shown by Cappell, Lee and Miller in \cite{Cappell}, this is no restriction of the generality. Actually, a simple modification of the below argument shows the assertion in finite dimensions, and the reader is invited to work out the details.\\   
In what follows, we let $\mathcal{FL}^2_0(E,\omega)$ be the set of all pairs $(\Lambda_0,\Lambda_1)\in\mathcal{FL}^2(E,\omega)$ which are transversal, i.e. $\Lambda_0\cap\Lambda_1=\{0\}$. 

\begin{lemma}\label{lema-FL0pathconnected}
The set $\mathcal{FL}^2_0(E,\omega)$ of transversal pairs is path-connected in $\mathcal{FL}^2(E,\omega)$. 
\end{lemma}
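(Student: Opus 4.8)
The plan is to reduce the connectedness of $\mathcal{FL}^2_0(E,\omega)$ to the connectedness of a more familiar object by moving everything into the diagonal picture. Recall that a pair $(\Lambda_0,\Lambda_1)$ lies in $\mathcal{FL}^2_0(E,\omega)$ exactly when $\Lambda_0\times\Lambda_1$ is a Lagrangian subspace of $(E\times E,\omega_E\times(-\omega_E))$ that is \emph{transversal} to the diagonal $\Delta$ and forms a Fredholm pair with it. So it suffices to show that the set $\mathcal{FL}_{\Delta,0}(E\times E,\omega_{E\times E})$ of Lagrangians transversal to $\Delta$ (inside the Fredholm Lagrangian Grassmannian $\mathcal{FL}_\Delta$) is path-connected, and then check that any two transversal \emph{product} pairs can be joined by a path that stays among product pairs — or, more cheaply, simply connect them through $\mathcal{FL}_{\Delta,0}$ and then note that path-connectedness of the ambient set is all that is literally asserted. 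Actually the cleanest route avoids the product subtlety: I would directly produce, for any given transversal pair $(\Lambda_0,\Lambda_1)$, an explicit path in $\mathcal{FL}^2_0(E,\omega)$ joining it to a fixed model pair.

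The concrete construction goes as follows. Fix $(\Lambda_0,\Lambda_1)\in\mathcal{FL}^2_0(E,\omega)$. Since $\Lambda_0\cap\Lambda_1=\{0\}$ and $\operatorname{ind}(\Lambda_0,\Lambda_1)=0$ by \eqref{ind=0Lagrangian}, transversality in fact forces $E=\Lambda_0\oplus\Lambda_1$ as a (not necessarily orthogonal) topological direct sum. The first move is to deform $(\Lambda_0,\Lambda_1)$ to a pair of the form $(\Lambda_0,\Lambda_1')$ where $\Lambda_1'=J\Lambda_0=\Lambda_0^\perp$: concretely, interpolate the graph of the bounded invertible operator $A:\Lambda_0\to\Lambda_0^\perp$ describing $\Lambda_1$ as a graph over $\Lambda_0$ (using $E=\Lambda_0\oplus\Lambda_0^\perp$). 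One checks that $\Lambda_1$ is Lagrangian iff the operator $JA:\Lambda_0\to\Lambda_0$ is selfadjoint with respect to $\langle\cdot,\cdot\rangle$ (a standard computation with $\omega(x,y)=\langle Jx,y\rangle$), so the path $s\mapsto\operatorname{graph}(sA)$ through graphs of $sA$, $s\in[0,1]$, consists of Lagrangians, is transversal to $\Lambda_0$ throughout (a graph over $\Lambda_0$ never meets $\Lambda_0$ except at $0$), and forms a Fredholm pair with $\Lambda_0$ (index $0$, and continuity of the pair plus openness of $\mathcal{FL}_{\Lambda_0}$ keeps us in $\mathcal{FL}^2$). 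At $s=0$ this lands on $(\Lambda_0,\Lambda_0^\perp)=(\Lambda_0,J\Lambda_0)$. The second move uses that the Lagrangian Grassmannian $\Lambda(E,\omega)$ itself is path-connected when $E$ is infinite dimensional (it is contractible by Kuiper, as recalled in the excerpt) — actually for the finite-dimensional case path-connectedness of $\Lambda(\mathbb{R}^{2n})$ is classical. So choose a path $\gamma$ in $\Lambda(E,\omega)$ from $\Lambda_0$ to a fixed model Lagrangian $L_0$; then $\lambda\mapsto(\gamma(\lambda),J\gamma(\lambda))$ is a path of pairs, and since $(L,JL)=(L,L^\perp)$ is always transversal and a Fredholm pair of index $0$, this whole path stays in $\mathcal{FL}^2_0(E,\omega)$. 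Concatenating, we have joined $(\Lambda_0,\Lambda_1)$ to the fixed pair $(L_0,JL_0)$, which proves path-connectedness.

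I expect the main obstacle to be the continuity and Fredholmness bookkeeping in the first move: one must verify that $s\mapsto\operatorname{graph}(sA)$ is continuous in the gap-metric (straightforward, since $\operatorname{graph}$ of a continuously varying bounded operator is gap-continuous) and, more importantly, that the pair $(\Lambda_0,\operatorname{graph}(sA))$ remains in $\mathcal{FL}^2$ for every $s$ — i.e. stays a Fredholm pair. This follows because $\mathcal{FL}_{\Lambda_0}(E,\omega)$ is open in $\Lambda(E,\omega)$ (quoted from Furutani in the excerpt) together with the fact that transversality to $\Lambda_0$ is an even stronger condition that is manifestly preserved along graphs over $\Lambda_0$; the index is then automatically $0$ by \eqref{ind=0Lagrangian}. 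A second, milder, point to be careful about is invoking the right fact for path-connectedness of $\Lambda(E,\omega)$ in the infinite-dimensional case (contractibility via Kuiper, already cited) and separately recalling the elementary classical statement in finite dimensions; but since Theorem B's Step 2 explicitly reduces to the infinite-dimensional case, the Kuiper input is all that is strictly needed here.
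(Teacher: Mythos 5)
Your route is genuinely different from the paper's. The paper first cites Furutani's homeomorphism of $\mathcal{FL}^0_W(E,\omega)$ with the bounded selfadjoint operators to get that this set is contractible, then joins two arbitrary transversal pairs by invoking a result of Piccione and Tausk that any two Lagrangians admit a common Lagrangian complement, deforming one component at a time through such contractible slices. You instead linearly contract the second component inside the graph picture down to $J\Lambda_0$, and then move both components simultaneously by dragging $\Lambda_0^\perp$ along a path of Lagrangians furnished by Kuiper's theorem; the latter replaces the Piccione--Tausk input, and both proofs rest on comparable infinite-dimensional facts. With the correction below your argument is valid and arguably more self-contained, since it re-derives the graph description rather than black-boxing it.

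The correction: you have the graph decomposition backwards. Transversality $\Lambda_0\cap\Lambda_1=\{0\}$ together with the index-zero Fredholm condition makes $\Lambda_1$ a topological complement of $\Lambda_0$, and via the orthogonal splitting $E=\Lambda_0\oplus\Lambda_0^\perp$ this means $\Lambda_1$ is a graph over $\Lambda_0^\perp$, not over $\Lambda_0$. A subspace of the form $\{x+Ax:x\in\Lambda_0\}$ with $A:\Lambda_0\to\Lambda_0^\perp$ is automatically disjoint from $\Lambda_0^\perp$ but meets $\Lambda_0$ precisely in $\ker A$, so your assertion that ``a graph over $\Lambda_0$ never meets $\Lambda_0$ except at $0$'' is only true when $A$ is injective; and, more seriously, the graph-over-$\Lambda_0$ description is simply unavailable for a Lagrangian $\Lambda_1$ that is transversal to $\Lambda_0$ but \emph{not} to $\Lambda_0^\perp$ (such pairs already exist in $\mathbb{R}^4$: take $\Lambda_0=\spa\{e_1,e_2\}$ and $\Lambda_1=\spa\{e_1+e_3,e_4\}$ with the standard $J$). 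The fix is immediate: write $\Lambda_1=\{v+Bv:v\in\Lambda_0^\perp\}$ for the bounded operator $B:\Lambda_0^\perp\to\Lambda_0$ obtained from the fact that $\Lambda_1$ projects isomorphically onto $\Lambda_0^\perp$ (no invertibility of $B$ is needed or available). The Lagrangian condition on $\Lambda_1$ translates into a linear symmetry condition on $B$, so $s\mapsto\{v+sBv:v\in\Lambda_0^\perp\}$ is a gap-continuous path of Lagrangians; each of them meets $\Lambda_0$ only in $0$ (any intersection point has zero $\Lambda_0^\perp$-component), so each pair is a transversal Fredholm pair of index $0$. This path runs from $\Lambda_0^\perp=J\Lambda_0$ at $s=0$ to $\Lambda_1$ at $s=1$, which is exactly what your first move needs. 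Your second move, $\lambda\mapsto(\gamma(\lambda),J\gamma(\lambda))$, is fine as stated: $J$ preserves $\Lambda(E,\omega)$, $P_{J\gamma(\lambda)}=I_E-P_{\gamma(\lambda)}$ so the pair varies continuously, and $(L,JL)=(L,L^\perp)$ is always transversal.
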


\begin{proof}
We note at first that by \cite[Rem. 1.33]{Furutani}, for every $W\in\Lambda(E,\omega)$ the set 

\[\mathcal{FL}^0_W(E,\omega)=\{L\in\mathcal{FL}_W(E,\omega):\, \dim(L\cap W)=0\}\subset\mathcal{FL}_W(E,\omega)\]
is homeomorphic to the space of bounded selfadjoint operators on a Hilbert space. Hence $\mathcal{FL}^0_W(E,\omega)$ is contractible and so in particular path-connected.\\
Let $(\Lambda_1,\Lambda_2)$ and $(\Lambda_3,\Lambda_4)$ be two transversal pairs. By \cite{Piccione}, there is $\Lambda'_1\in\Lambda(E,\omega)$ such that $\Lambda'_1\oplus\Lambda_2=\Lambda'_1\oplus\Lambda_4=E$. In particular, we obtain a path connecting $(\Lambda_1,\Lambda_2)$ and $(\Lambda'_1,\Lambda_2)$ inside $\mathcal{FL}^0_{\Lambda_2}(E,\omega)\subset\mathcal{FL}^2_0(E,\omega)$. Also, as $\mathcal{FL}^0_{\Lambda'_1}(E,\omega)$ is path-connected, there is a path connecting $(\Lambda'_1,\Lambda_2)$ and $(\Lambda'_1,\Lambda_4)$ inside $\mathcal{FL}^2_0(E,\omega)$. Finally, there is a path from $(\Lambda'_1,\Lambda_4)$ to $(\Lambda_3,\Lambda_4)$ inside $\mathcal{FL}^2_0(E,\omega)$ as $\mathcal{FL}^0_{\Lambda_4}(E,\omega)$ is path-connected.   
\end{proof}
\noindent
Our proof of the spectral flow formula in Theorem B is based on the following proposition. In its statement and proof we simplify our notation by denoting paths $\{(\Lambda_0(\lambda),\Lambda_1(\lambda))\}_{\lambda\in I}$ in $\mathcal{FL}^2(E,\omega)$ by $\gamma$.

\begin{prop}\label{uniqueness}
Let $\mu:\Omega(\mathcal{FL}^2(E,\omega),\mathcal{FL}^2_0(E,\omega))\rightarrow\mathbb{Z}$ be a map such that

\begin{itemize}
\item[(i)] $\mu(\gamma)=0$ if $\gamma(\lambda)\in\mathcal{FL}^2_0(E,\omega)$ for all $\lambda\in I$.
\item[(ii)] $\mu(\gamma_1\ast\gamma_2)=\mu(\gamma_1)+\mu(\gamma_2)$ whenever the concatenation $\gamma_1\ast\gamma_2$ is defined.
\item[(iii)] $\mu(\gamma_1)=\mu(\gamma_2)$ if the paths $\gamma_1$ and $\gamma_2$ are homotopic with fixed endpoints.
\item[(iv)] There is a path $\gamma_{nor}$ such that $\mu(\gamma_{nor})=\mu_{Mas}(\gamma_{nor})=1$.
\end{itemize}
Then \[
\mu=\mu_{Mas}:\Omega(\mathcal{FL}^2(E,\omega),\mathcal{FL}^2_0(E,\omega))\rightarrow\mathbb{Z}.\]
\end{prop}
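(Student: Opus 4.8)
The plan is to prove Proposition~\ref{uniqueness} by a standard uniqueness-of-index argument: show that any $\mu$ satisfying (i)--(iv) is forced to agree with $\mu_{Mas}$ on every path, by reducing an arbitrary path to a concatenation of a loop and a path lying entirely in $\mathcal{FL}^2_0(E,\omega)$, and then using that $\mu_{Mas}$ generates $\pi_1(\mathcal{FL}^2(E,\omega))\cong\mathbb{Z}$ (Theorem~\ref{Furutani-iso}). First I would use Lemma~\ref{lema-FL0pathconnected}: since $\mathcal{FL}^2_0(E,\omega)$ is path-connected, for an arbitrary $\gamma\in\Omega(\mathcal{FL}^2(E,\omega),\mathcal{FL}^2_0(E,\omega))$ I can choose an auxiliary path $\sigma$ inside $\mathcal{FL}^2_0(E,\omega)$ running from $\gamma(1)$ back to $\gamma(0)$. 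Then $\gamma\ast\sigma$ is a loop based at $\gamma(0)$, and by (i) and (ii) we have $\mu(\gamma)=\mu(\gamma\ast\sigma)$; the same identity holds for $\mu_{Mas}$ by properties (i') and (ii') of the Maslov index. So it suffices to prove $\mu=\mu_{Mas}$ on \emph{loops}.

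Next I would reduce loops to integer multiples of a single generator. By Theorem~\ref{Furutani-iso}, $\mu_{Mas}:\pi_1(\mathcal{FL}^2(E,\omega))\to\mathbb{Z}$ is an isomorphism, so there is a loop $\ell$ (which I would take, up to homotopy with fixed endpoints, to be $\gamma_{nor}$ closed up by a path in $\mathcal{FL}^2_0$, or more carefully a loop whose class generates $\pi_1$) with $\mu_{Mas}([\ell])=1$, and every loop $L$ is homotopic with fixed endpoints to a concatenation $\ell^{\ast k}$ (or $(\ell^{-1})^{\ast|k|}$ if $k<0$) where $k=\mu_{Mas}([L])$. Here I must be slightly careful about base points: a homotopy of loops in $\pi_1$ allows the base point to move, so I would first shift all loops to a common base point by conjugating with a path in $\mathcal{FL}^2_0(E,\omega)$, again using Lemma~\ref{lema-FL0pathconnected} and properties (i), (ii) (for $\mu$) and (i'), (ii') (for $\mu_{Mas}$) to see that this conjugation does not change either invariant. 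Then, using (ii) for additivity, (iii) for homotopy invariance with fixed endpoints, and (iv) together with the relation $\mu(\ell^{-1})=-\mu(\ell)$ (which follows from (i) and (ii) applied to $\ell\ast\ell^{-1}$, a null-homotopic loop, hence by (iii) and (i) has $\mu$-value $0$), I conclude $\mu([L])=k\cdot\mu(\gamma_{nor})=k=\mu_{Mas}([L])$.

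Finally I would assemble these pieces: for arbitrary $\gamma$, write $\mu(\gamma)=\mu(\gamma\ast\sigma)$ with $\gamma\ast\sigma$ a loop, shift to the base point of $\gamma_{nor}$, express the resulting loop class as a multiple of $[\gamma_{nor}]$ using Theorem~\ref{Furutani-iso}, and invoke (ii),(iii),(iv) to evaluate $\mu$; the same chain of equalities computes $\mu_{Mas}(\gamma)$, so the two agree. The main obstacle I anticipate is the bookkeeping with base points and with the choice of the auxiliary paths in $\mathcal{FL}^2_0(E,\omega)$: one has to make sure that closing up a path to a loop, conjugating to move the base point, and decomposing into copies of $\gamma_{nor}$ are all operations under which \emph{both} $\mu$ and $\mu_{Mas}$ transform identically — this uses only (i)--(iii) for $\mu$ and the analogous (i')--(iii') for $\mu_{Mas}$, plus the crucial normalisation (iv) and the surjectivity/injectivity in Theorem~\ref{Furutani-iso}. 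The actual estimates are trivial; the care is entirely in the combinatorics of concatenation and homotopy.
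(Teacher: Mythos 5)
Your proposal is correct and follows essentially the same strategy as the paper: close an arbitrary admissible path up to a loop by concatenating with paths in $\mathcal{FL}^2_0(E,\omega)$ (Lemma~\ref{lema-FL0pathconnected}), observe via (i)--(ii) that this does not change $\mu$ or $\mu_{Mas}$, and then use that both induce homomorphisms on $\pi_1(\mathcal{FL}^2(E,\omega))\cong\mathbb{Z}$ (Theorem~\ref{Furutani-iso}) which agree on a generator furnished by $\gamma_{nor}$ via (iv). The paper compresses the base-point bookkeeping you spell out by directly normalising $\gamma_{nor}$ into a generator loop $\gamma_1\ast\gamma_{nor}\ast\gamma_2$ based at a fixed transversal pair $p_0$, but the content is the same.
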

\noindent
Note that the Maslov index indeed has the properties (i) - (iii) in Proposition \ref{uniqueness} as we have recalled in Section \ref{subsection-Maslov}, (i') - (iii').

\begin{proof}
Let $p_0=(\Lambda_0,\Lambda'_0)\in\mathcal{FL}^2_0(E,\omega)$ be any transversal pair. Note that, by (ii) and (iii), $\mu$ induces a homomorphism $\pi_1(\mathcal{FL}^2(E,\omega),p_0)\rightarrow\mathbb{Z}$. Moreover, let us recall from Theorem \ref{Furutani-iso} that $\mu_{Mas}:\pi_1(\mathcal{FL}^2(E,\omega),p_0)\rightarrow\mathbb{Z}$ is an isomorphism.\\
By Lemma \ref{lema-FL0pathconnected}, we can connect $p_0$ to $\gamma_{nor}(0)$ and $\gamma_{nor}(1)$ to $p_0$ by paths $\gamma_1,\gamma_2$ in $\mathcal{FL}^2_0(E,\omega)$. Then the concatenation $\gamma_1\ast\gamma_{nor}\ast\gamma_2$ is a closed path based at $p_0$ and consequently defines an element of the infinite cyclic group $\pi_1(\mathcal{FL}^2(E,\omega),p_0)$. We obtain by $(i)$, $(ii)$ and $(iv)$

\begin{align*}
\mu_{Mas}(\gamma_1\ast\gamma_{nor}\ast\gamma_2)&=\mu_{Mas}(\gamma_1)+\mu_{Mas}(\gamma_{nor})+\mu_{Mas}(\gamma_2)=\mu_{Mas}(\gamma_{nor})=1
\end{align*}
and see that $\gamma_1\ast\gamma_{nor}\ast\gamma_2$ is a generator of $\pi_1(\mathcal{FL}^2(E,\omega),p_0)$. As, by the same argument, $\mu(\gamma_1\ast\gamma_{nor}\ast\gamma_2)=1$, we have shown that $\mu=\mu_{Mas}:\pi_1(\mathcal{FL}^2(E,\omega),p_0)\rightarrow\mathbb{Z}$.\\
We now consider the general case of an arbitrary element $\gamma\in\Omega(\mathcal{FL}^2(E,\omega),\mathcal{FL}^2_0(E,\omega))$. As the endpoints of $\gamma$ are in $\mathcal{FL}^2_0(E,\omega)$, we can again find two paths $\gamma_1$, $\gamma_2$ such that $\gamma_1$ connects $p_0$ and $\gamma(0)$ inside $\mathcal{FL}^2_0(E,\omega)$, and $\gamma_2$ connects $\gamma(1)$ and $p_0$ inside $\mathcal{FL}^2_0(E,\omega)$. Then $\gamma_1\ast\gamma\ast\gamma_2$ defines an element of $\pi_1(\mathcal{FL}^2(E,\omega),p_0)$ and we obtain from the previous paragraph, (i) and (ii)

\begin{align*}
\mu(\gamma)&=\mu(\gamma_1)+\mu(\gamma)+\mu(\gamma_2)=\mu(\gamma_1\ast\gamma\ast\gamma_2)=\mu_{Mas}(\gamma_1\ast\gamma\ast\gamma_2)\\
&=\mu_{Mas}(\gamma_1)+\mu_{Mas}(\gamma)+\mu_{Mas}(\gamma_2)=\mu_{Mas}(\gamma).
\end{align*}  
\end{proof}
\noindent

\begin{rem}
Let us point out that a similar argument as in the proof of Proposition \ref{uniqueness} can be used to characterise the Maslov index axiomatically by just three axioms. Indeed, it follows from the construction in \cite{Furutani} that the Maslov index is invariant under homotopies having endpoints in $\mathcal{FL}^2_0(E,\omega)$. It is readily seen that $(i)$ is redundant when using this homotopy invariance in $(iii)$.   
\end{rem}
\noindent
We now define a map 

\[\mu:\Omega(\mathcal{FL}^2(E,\omega),\mathcal{FL}^2_0(E,\omega))\rightarrow\mathbb{Z},\quad \mu(\{(\Lambda_0(\lambda),\Lambda_1(\lambda))\}_{\lambda\in I})=\sfl(\mathcal{Q}),\]
where $\mathcal{Q}$ is the path of differential operators in \eqref{Q}. By the properties of the spectral flow from Section \ref{section-sfl}, we see at once that $\mu$ satisfies (ii) and (iii) in Proposition \ref{uniqueness}. Moreover, if $(\Lambda_0(\lambda),\Lambda_1(\lambda))\in\mathcal{FL}^2_0(E,\omega)$ for all $\lambda\in I$, then

\[\dim\ker(\mathcal{Q}_\lambda)=\dim(\Lambda_0(\lambda)\cap\Lambda_1(\lambda))=0,\quad\lambda\in I.\]
Therefore, $\mathcal{Q}_\lambda$ is invertible for all $\lambda\in I$, which shows that $\mu(\{(\Lambda_0(\lambda),\Lambda_1(\lambda))\}_{\lambda\in I})=\sfl(\mathcal{Q})=0$.\\
Hence in order to deduce Theorem B from Proposition \ref{uniqueness} we only need to find a path $\gamma_{nor}$ in $\mathcal{FL}^2(E,\omega)$ with transversal endpoints for which $\mu_{Mas}(\gamma_{nor})=\mu(\gamma_{nor})=1$. This will now keep us busy for the remainder of this section.\\
At first, to obtain a path of Lagrangian subspaces having Maslov index $1$, we argue similar as in Example 2.31 in \cite{Furutani}. Let $\Lambda$ be an arbitrary Lagrangian subspace of $(E,\omega)$, and $V\subset\Lambda^\perp$ a one-dimensional subspace. We consider the bounded linear operators $\Psi_\lambda$ defined by

\begin{align}\label{psi}
\Psi_\lambda u=\begin{cases}
(\cos(\pi \lambda)I_E+\sin(\pi \lambda)J)u,&u\in V\\
u, &u\in V^\perp\cap\Lambda^\perp,
\end{cases}
\end{align}
and set $\Lambda(\lambda)=\Psi_\lambda\left(\Lambda^\perp\right)$ for $\lambda\in I$. It is readily seen that $(\Lambda(\lambda),\Lambda)\in\mathcal{FL}_\Lambda(E,\omega)$. Hence $\gamma_{nor}(\lambda):=(\Lambda(\lambda),\Lambda)\in\mathcal{FL}^2(E,\omega)$ for all $\lambda\in I$. Note that $\Lambda(\lambda)\cap\Lambda\neq\{0\}$ if and only if $\lambda=\frac{1}{2}$, where the intersection is the one-dimensional space $J(V)\subset\Lambda$.\\
Furutani explained in \cite[\S 3.4]{Furutani} how the Maslov index for paths $(\Lambda(\lambda),\Lambda)\in\mathcal{FL}^2(E,\omega)$ can be computed in cases when there is only one single parameter value $\lambda_0$ for which $\Lambda(\lambda_0)\cap\Lambda\neq\{0\}$. For all $\lambda$ sufficiently close to $\lambda_0$, there is a bounded linear operator $A_\lambda:\Lambda(\lambda_0)\rightarrow\Lambda(\lambda_0)$ such that 

\[\Lambda(\lambda)=\{u+JA_\lambda u:\,u\in\Lambda(\lambda_0)\}.\]
If $\Lambda(\cdot)$ is differentiable in $\Lambda(E,\omega)$, then $A$ is a differentiable path in $\mathcal{L}(\Lambda(\lambda_0))$. The Maslov index of $\{(\Lambda(\lambda),\Lambda)\}_{\lambda\in I}$ is given by the signature of the quadratic form

\begin{align}\label{crossing}
\Gamma:\Lambda(\lambda_0)\cap\Lambda\rightarrow\mathbb{R},\quad \Gamma[u]=\frac{d}{d\lambda}\mid_{\lambda=\lambda_0}\omega(u,J A_\lambda u).
\end{align}
For our particular path $\Lambda(\cdot)$ defined by \eqref{psi}, we see that

\[\Lambda(\lambda_0)=(V^\perp\cap\Lambda^\perp)\oplus J(V),\]
where $\lambda_0=\frac{1}{2}$. Let us now consider for $\lambda\in(0,1)$ the operators $A_\lambda$ on $\Lambda(\lambda_0)$ which are defined by 

\begin{align*}
A_\lambda u=\begin{cases}
-\frac{\cos(\pi\lambda)}{\sin(\pi\lambda)}\,u, &u\in J(V)\\
0, &u\in V^\perp\cap\Lambda^\perp.
\end{cases}
\end{align*}
We claim that 

\begin{align}\label{graphA}
\Lambda(\lambda)=\{u+JA_\lambda u:\,u\in\Lambda(\lambda_0)\}.
\end{align}
Indeed, we note at first that trivially $V^\perp\cap\Lambda^\perp=\{u+JA_\lambda u:\, u\in V^\perp\cap\Lambda^\perp\}$. Moreover, as $\lambda\in(0,1)$, we see that $u\in J(V)$ if and only if there is $v\in V$ such that $u=\sin(\pi\lambda)Jv$. Since 

\begin{align*}
u+JA_\lambda u=\sin(\pi\lambda)Jv-J^2\cos(\pi\lambda)v=\cos(\pi\lambda)v+\sin(\pi\lambda)Jv,
\end{align*}
this shows \eqref{graphA}. The crossing form \eqref{crossing} is defined on $\Lambda(\lambda_0)\cap\Lambda$, which is the one-dimensional space $J(V)$ on which $A_\lambda$ acts by 

\[A_\lambda u=-\frac{\cos(\pi\lambda)}{\sin(\pi\lambda)}u.\]
Hence

\[\frac{d}{d\lambda}\omega(u,JA_\lambda u)=\frac{d}{d\lambda}\langle u,A_\lambda u\rangle =\frac{\pi}{\sin^2(\pi\lambda)} \langle u,u\rangle,\]
and so $\Gamma[u]=\pi \langle u,u\rangle $, which has signature $1$. Consequently, we have shown that 

\[\mu_{Mas}(\gamma_{nor})=1.\]
Let us now consider our other invariant $\mu$, which is by definition the spectral flow of the path of operators $\mathcal{Q}$ associated to $\gamma_{nor}$ as in \eqref{Q}. Let us recall that the kernel of $\mathcal{Q}_\lambda$ is the intersection $\Lambda(\lambda)\cap\Lambda$. As $\mathcal{Q}_\lambda$ is Fredholm of index $0$ for all $\lambda\in I$, this means that $\mathcal{Q}_{\lambda}$ is not invertible if and only if $\lambda=\lambda_0=\frac{1}{2}$, and $\mathcal{Q}_{\lambda_0}$ has a one-dimensional kernel. Therefore, the spectral flow of $\mathcal{Q}$ can only be $-1$, $0$ or $1$. The final step of our proof is the following lemma about the eigenvalues of $\mathcal{Q}_\lambda$. Let us point out once again that we denote by $\lambda$ our parameter in $I$ and not elements of spectra of operators.

\begin{lemma}\label{eigenvaluesQ}
The eigenvalues of $\mathcal{Q}_\lambda$ are given by

\[\sigma_p(\mathcal{Q}_\lambda)=\left\{\pi \lambda-\frac{\pi}{2}+\pi k:\,k\in\mathbb{Z}\right\}\cup\left\{\frac{\pi}{2}+k\pi:\, k\in\mathbb{Z}\right\}.\]
Moreover, the elements of the first set on the right hand side are simple eigenvalues as long as $\lambda\in(0,1)$.
\end{lemma}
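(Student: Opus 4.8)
The plan is to solve the eigenvalue equation $\mathcal{Q}_\lambda u=\mu u$ explicitly. Since every $u\in\mathcal{D}(\mathcal{Q}_\lambda)\subset H^1([0,1],E)$ is continuous, the identity $Ju'(t)=\mu u(t)$ is equivalent to $u'(t)=-\mu Ju(t)$, whose solutions are $u(t)=\exp(-\mu tJ)u(0)=\cos(\mu t)u(0)-\sin(\mu t)Ju(0)$, using $J^2=-I_E$. Hence $\mu\in\sigma_p(\mathcal{Q}_\lambda)$ precisely when there is a nonzero $x:=u(0)\in E$ with $x\in\Lambda(\lambda)$ and $\cos(\mu)x-\sin(\mu)Jx\in\Lambda$, and the corresponding eigenspace in $L^2$ is isomorphic to the set of such $x$ together with $0$. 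So the whole lemma reduces to a linear-algebra problem on $E$.

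To organise that problem I would pass to a suitable orthogonal decomposition of $E$. Fix a unit vector $e$ spanning $V\subset\Lambda^\perp$. From $J\Lambda^\perp=J^2\Lambda=\Lambda$ one gets $Je\in\Lambda$, and $\langle e,Je\rangle=0$ because $J^T=-J$; thus $P:=\spa\{e,Je\}$ is a two-dimensional $J$-invariant subspace, and so is $P^\perp$. One then records the orthogonal splittings $\Lambda=\mathbb{R}Je\oplus(\Lambda\cap P^\perp)$, $\Lambda^\perp=\mathbb{R}e\oplus(\Lambda^\perp\cap P^\perp)$, and -- the one identity that needs a word of care -- $V^\perp\cap\Lambda^\perp=\Lambda^\perp\cap P^\perp$, since every element of $\Lambda^\perp$ is automatically orthogonal to $Je\in\Lambda$. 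By the definition \eqref{psi} this yields $\Lambda(\lambda)=\mathbb{R}\bigl(\cos(\pi\lambda)e+\sin(\pi\lambda)Je\bigr)\oplus(\Lambda^\perp\cap P^\perp)$. Writing $x=p+q$ with $p\in P$ and $q\in P^\perp$, and using that $\exp(-\mu J)=\cos(\mu)I_E-\sin(\mu)J$ preserves both $P$ and $P^\perp$, the two boundary conditions decouple into one condition on $p$ and one on $q$.

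For the $P^\perp$-part: $q\in\Lambda^\perp\cap P^\perp$ and $\cos(\mu)q-\sin(\mu)Jq\in\Lambda$; since $Jq\in\Lambda\cap P^\perp$, the $\Lambda^\perp$-component of $\cos(\mu)q-\sin(\mu)Jq$ is $\cos(\mu)q$, so $q\neq0$ forces $\cos\mu=0$, that is $\mu\in\{\tfrac{\pi}{2}+k\pi\}$; conversely, whenever $\cos\mu=0$ every $q$ is admissible, and $\Lambda^\perp\cap P^\perp$ is infinite-dimensional because $E$ is, so all these $\mu$ are genuine eigenvalues. For the $P$-part: with $p=c\bigl(\cos(\pi\lambda)e+\sin(\pi\lambda)Je\bigr)$ a short computation using $Je$ and $J^2e=-e$ gives $\cos(\mu)p-\sin(\mu)Jp=c\cos(\mu-\pi\lambda)e-c\sin(\mu-\pi\lambda)Je$, which lies in $\mathbb{R}Je$ iff $c\cos(\mu-\pi\lambda)=0$; hence $p\neq0$ forces $\cos(\mu-\pi\lambda)=0$, i.e. $\mu\in\{\pi\lambda-\tfrac{\pi}{2}+k\pi\}$, and conversely each such $\mu$ is an eigenvalue with eigenvector built from $p$. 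Taking the union of the two cases gives the stated description of $\sigma_p(\mathcal{Q}_\lambda)$.

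Finally, for simplicity: if $\lambda\in(0,1)$ the two progressions $\{\pi\lambda-\tfrac{\pi}{2}+k\pi\}$ and $\{\tfrac{\pi}{2}+k\pi\}$ are disjoint (an equality would force $\lambda\in\mathbb{Z}$), so for $\mu=\pi\lambda-\tfrac{\pi}{2}+k\pi$ the $P^\perp$-component $q$ of any eigenvector must vanish while the $P$-component is confined to the one-dimensional line $\mathbb{R}\bigl(\cos(\pi\lambda)e+\sin(\pi\lambda)Je\bigr)$; hence the eigenspace is one-dimensional. I do not expect a serious obstacle: the argument is essentially the explicit ODE solution together with the $J$-invariant orthogonal splitting, and the only points that need genuine attention are the identification $V^\perp\cap\Lambda^\perp=\Lambda^\perp\cap P^\perp$, the appeal to $\dim E=\infty$ to realise the second family of eigenvalues, and keeping the trigonometric bookkeeping correct.
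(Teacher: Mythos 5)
Your proof is correct and follows essentially the same route as the paper: solve $Ju'=\mu u$ explicitly, reduce to an intersection condition on $E$, and split $E$ into the $J$-invariant plane $P=\spa\{e,Je\}$ and its orthogonal complement, from which the two families of eigenvalues emerge. The only (cosmetic) differences are that the paper packages the $P$-part computation with an auxiliary scalar $\alpha$ and a complex exponential identity, whereas you work componentwise in the basis $\{e,Je\}$, and you make the disjointness of the two arithmetic progressions for $\lambda\in(0,1)$ explicit when justifying simplicity, a point the paper leaves to the reader.
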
 

\begin{proof} 
We first note that, for $\mu\in\mathbb{R}$, all solutions of $Ju'=\mu u$ are given by

\[u(t)=\exp(-\mu tJ)c,\, t\in[0,1],\,c\in E.\]
Such a function belongs to $\mathcal{D}(\mathcal{Q}_\lambda)$ if and only if

\[u(0)\in\Lambda(\lambda),\qquad u(1)=\exp(-\mu J)u(0)\in\Lambda,\]
or, in other words,

\begin{align*}
\exp(\mu J)(\Lambda)\cap\Lambda(\lambda)\neq\{0\}.
\end{align*}
It is readily seen from $J^2=-I_E$ that

\begin{align}\label{EulerJ}
\exp(\mu J)=\cos(\mu)I_E+\sin(\mu)J.
\end{align}
If we compare \eqref{EulerJ} and \eqref{psi}, we see that $\mu$ is an eigenvalue of $\mathcal{Q}_\lambda$ if and only if either

\begin{itemize}
\item[(i)] $(\cos(\mu)I_E+\sin(\mu)J)\Lambda\cap(\Lambda^\perp\cap V^\perp)\neq\{0\}$, or
\item[(ii)] $(\cos(\mu)I_E+\sin(\mu)J)\Lambda\cap (\cos(\pi \lambda)I_E+\sin(\pi \lambda)J)V\neq\{0\}$.
\end{itemize}
From \eqref{Lperp=JL}, we see that the first case happens if and only if $\cos(\mu)=0$, i.e. $\lambda=\frac{\pi}{2}+k\pi$ for $k\in\mathbb{Z}$. Note that the intersections are of infinite dimension and so we have eigenvalues of infinite multiplicity. This reflects the fact that $\mathcal{Q}_\lambda$ does not have a compact resolvent.\\
In the second case, there are non-zero elements $x\in V\subset\Lambda^\perp$ and $y\in\Lambda$ such that

\[\begin{cases}
\sin(\pi \lambda)Jx=\cos(\mu)y\\
\cos(\pi \lambda)x=\sin(\mu)Jy.
\end{cases}\]
These equations can hold if and only if there is $\alpha\in\mathbb{R}$ such that $Jy=\alpha x$, which yields

\[\begin{cases}
\sin(\pi \lambda)=-\alpha\cos(\mu)\\
\cos(\pi \lambda)=\alpha\sin(\mu)
\end{cases}\]
or, in other words, $e^{-i(\pi \lambda-\frac{\pi}{2})}=-\alpha e^{-i\mu}$. This clearly implies that $|\alpha|=1$ and $\mu=\pi \lambda-\frac{\pi}{2}+k\pi$, $k\in\mathbb{Z}$. Note that these are eigenvalues of multiplicity one as $V$ is one-dimensional. 
\end{proof}
\noindent
It is now easy to obtain the spectral flow formula in Theorem B from the previous lemma. Indeed, we know from the construction of the spectral flow that there is $0<\varepsilon<\frac{\pi}{2}$ such that, for all $\lambda\in I$, $\mu\in(-\varepsilon,\varepsilon)$ is either in the resolvent set of $\mathcal{Q}_\lambda$ or it is an eigenvalue of finite multiplicity. Therefore, by the properties (i) and (ii) of the spectral flow from Section \ref{section-sfl} and Lemma \ref{eigenvaluesQ}, we only need to consider the eigenvalue 

\[\mu(\lambda)=\pi\lambda-\frac{\pi}{2}\]
for $\lambda$ in a neighbourhood of $\lambda_0=\frac{1}{2}$. It is a straightforward consequence of the definition of the spectral flow \eqref{sfl} that $\sfl(\mathcal{Q})=1$. Hence $\mu(\gamma_{nor})=1$ and Theorem B is proved.

\section{Theorem C}
In this section, we let $H$ be a complex Hilbert space unless otherwise stated. Let us point out that, however, most of our results carry over to real Hilbert spaces when $KO$-theory is used instead of $K$-theory.\\ 
The index bundle for families of bounded Fredholm operators in a Hilbert space was independently introduced by Atiyah and J\"anich in the sixties (see \cite{Atiyah} and \cite{Janich}). It assigns to any family $L:X\rightarrow\mathcal{BF}(H)$ of bounded Fredholm operators on $H$ a $K$-theory class

\begin{align}\label{AtiyahJanich}
\ind(L)\in K(X)
\end{align}
which has several properties that are similar to the Fredholm index of a single operator. One of the main applications of this generalisations to families is that it shows that the space of Fredholm operators on a separable Hilbert space is a classifying space for $K$-theory. The index bundle was later generalised to families of Fredholm operators in Banach spaces (see \cite{Russian}), and to morphisms between Banach bundles in \cite{indbundleIch}. Let us briefly recall the latter construction, as we will need it below in the definition of the index bundle for gap-continuous families.\\
Let $\mathcal{E}$ and $\mathcal{F}$ be Banach bundles over a compact and connected base space $X$ and let $L:\mathcal{E}\rightarrow\mathcal{F}$ be a bundle morphism which is Fredholm in every fibre. It was shown in \cite{indbundleIch} that there is a finite dimensional subbundle $\mathcal{V}\subset\mathcal{F}$ such that

\begin{align}\label{transversal}
\im(L_\lambda)+\mathcal{V}_\lambda=\mathcal{F}_\lambda,\quad\lambda\in X.
\end{align} 
As $\mathcal{V}$ is finite dimensional, there is a bundle morphism $P:\mathcal{F}\rightarrow\mathcal{F}$ such that $P^2=P$ and $\im(P_\lambda)=\mathcal{V}_\lambda$, $\lambda\in X$. Hence the composition

\[\mathcal{E}\xrightarrow{L}\mathcal{F}\xrightarrow{I_\mathcal{F}-P}\mathcal{V}'\]
is a surjective Banach bundle morphism onto $\mathcal{V}'=\im(I_\mathcal{F}-P)$. As the fibrewise kernels of surjective Banach bundle morphisms are Banach bundles (see \cite{Lang}), and $\ker((I_{\mathcal{F}_\lambda}- P_\lambda)\circ L_\lambda)=L^{-1}_\lambda(\mathcal{V}_\lambda)$, we obtain a subbundle 

\[E(L,\mathcal{V}):=L^{-1}(\mathcal{V})\subset\mathcal{E}.\]
It is readily seen that 

\begin{align}\label{dimE}
\dim(E(L,\mathcal{V}))=\ind(L_\lambda)+\dim(\mathcal{V}),\quad \lambda\in X,
\end{align}
where $\ind(L_\lambda)$ denotes the Fredholm index of the operator $L_\lambda$.\\ 
Let us now assume that $Y\subset X$ is a closed subset of $X$ such that $L_\lambda$ is invertible for all $\lambda\in Y$. Then $L$ induces a morphism $L:E(L,\mathcal{V})\rightarrow\mathcal{V}$ between finite dimensional vector bundles, which is an isomorphism over $Y$. Hence we obtain a $K$-theory class (see Appendix \ref{app-K})

\[\ind(L)=[E(L,\mathcal{V}),\mathcal{V},L]\in K(X,Y),\]
which we call the \textit{index bundle} of $L$. This definition is sensible as it can be shown that $\ind(L)$ does not depend on the choice of the bundle $\mathcal{V}$ in \eqref{transversal}. Note that we do not exclude the case that $Y=\emptyset$ in the definition of $\ind(L)$, however, if $Y\neq\emptyset$, then the operators $L_\lambda$ are necessarily of Fredholm index $0$ by \eqref{dimE}.\\
In the following list of properties of the index bundle, we assume throughout that $L:\mathcal{E}\rightarrow\mathcal{F}$ is a Fredholm morphism such that $L_\lambda$ is an isomorphism for every $\lambda\in Y$.

\begin{itemize}
\item If $L$ is a bundle isomorphism, then $\ind(L)=0\in K(X,Y)$.
\item If $M:\tilde{\mathcal{E}}\rightarrow\tilde{\mathcal{F}}$ is a further Fredholm morphisms such that $M_\lambda$ is invertible for all $\lambda\in Y$, then

\[\ind(L\oplus M)=\ind(L)+\ind(M)\in K(X,Y).\]
\item Let $\mathcal{E}$ and $\mathcal{F}$ be Banach bundles over $X\times I$ and $h:\mathcal{E}\rightarrow\mathcal{F}$ a Fredholm morphism. If $h_{(\lambda,s)}$ is invertible for all $(\lambda,s)\in Y\times I$, then

\[\ind(h\mid_{X\times\{0\}})=\ind(h\mid_{X\times\{1\}})\in K(X,Y).\]
\item Let $K:\mathcal{E}\rightarrow\mathcal{F}$ be a morphism which is compact in every fibre, and let $L_\lambda+sK_\lambda$ be invertible for all $\lambda\in Y$, $s\in I$. Then

\[\ind(L+K)=\ind(L)\in K(X,Y).\]
\end{itemize}
\noindent
We now assume that $(X',Y')$ is a further compact pair.

\begin{itemize}
\item If $f:(X',Y')\rightarrow(X,Y)$ is continuous, then 

\[(f^\ast L)_\lambda:=L_{f(\lambda)}:\mathcal{E}_{f(\lambda)}\rightarrow\mathcal{F}_{f(\lambda)}\]
defines a morphism $f^\ast L:f^\ast\mathcal{E}\rightarrow f^\ast\mathcal{F}$ which is Fredholm. Moreover,

\[\ind(f^\ast L)=f^\ast\ind(L)\in K(X',Y').\]
\end{itemize}
\noindent
Finally, let $\mathcal{G}$ be a further Banach bundle over $X$. The following rule is usually called the \textit{logarithmic property} of the index bundle.

\begin{itemize}
\item If $M:\mathcal{F}\rightarrow\mathcal{G}$ is a further Fredholm morphism which is invertible over $Y$, then

\[\ind(M\circ L)=\ind(M)+\ind(L)\in K(X,Y).\]
\end{itemize}


\subsection{The Index Bundle for Gap-Continuous Families}
Let us now consider a gap-continuous family $\mathcal{A}:X\rightarrow\mathcal{CF}(H)$ of Fredholm operators on $H$ which are parametrised by a compact space $X$. The aim of this section is to generalise the index bundle \eqref{AtiyahJanich} of Atiyah and J\"anich to this setting of unbounded Fredholm operators, where we follow \cite{thesis} (see also \cite{Mike}). Note that the domains $\mathcal{D}(\mathcal{A}_\lambda)$ are not constant and so the classical construction cannot be adapted straight away just by using graph norms. The key step of our approach is the construction of the \textit{domain bundle}, for which we want to recall at first the following well known theorem that can be found, e.g.,  in \cite[Thm. 3.2]{Steenrod}.

\begin{theorem}\label{bundleconstructiontheorem}
Let $p:\mathcal{E}\rightarrow X$ be a surjective map from some set $\mathcal{E}$ to a topological space
$X$, and let $\mathcal{J}$ be an index set. Let $\{U_j\}_{j\in\mathcal{J}}$ be an open cover of $X$, and suppose that we are given for each $U_j$ a Banach space $E_j$ and a bijection 

\[\varphi_j:p^{-1}(U_j)\rightarrow U_j\times E_j\]
such that $p=p_1\circ\varphi_j$ on $p^{-1}(U_j)$, where $p_1:U_j\times E_j\rightarrow U_j$ denotes the projection onto the first component. Moreover, we assume that, for each pair $U_i,U_j$ such that $U_i\cap U_j\neq\emptyset$, the map

\[U_i\cap U_j\rightarrow GL(E_j,E_i),\quad \lambda\mapsto(\varphi_i\circ\varphi^{-1}_j)_\lambda\]
is continuous with respect to the norm topology.\\
Then there exists a unique topology on $\mathcal{E}$ making it into the total space of a Banach bundle
with projection $p$ and trivialising covering $\{U_j\}_{j\in\mathcal{J}}$.
\end{theorem}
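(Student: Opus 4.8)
The plan is to manufacture the topology on $\mathcal{E}$ directly out of the trivialising bijections and then check, in turn, that it is a topology, that the $\varphi_j$ become homeomorphisms, that $p$ is continuous with Banach fibres, and that no other topology works. Concretely, I would declare a set $\mathcal{O}\subseteq\mathcal{E}$ to be open precisely when $\varphi_j\big(\mathcal{O}\cap p^{-1}(U_j)\big)$ is open in $U_j\times E_j$ for every $j\in\mathcal{J}$, and call the resulting collection $\tau$. Since the $\varphi_j$ are bijections, images and preimages commute with arbitrary unions and finite intersections, so verifying that $\tau$ is a topology is a routine check. The one preliminary point worth recording is that each $p^{-1}(U_j)$ lies in $\tau$: for any $i$ one has $\varphi_i\big(p^{-1}(U_j)\cap p^{-1}(U_i)\big)=\varphi_i\big(p^{-1}(U_i\cap U_j)\big)=(U_i\cap U_j)\times E_i$ because $p=p_1\circ\varphi_i$, and $U_i\cap U_j$ is open in $U_i$. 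Thus $\{p^{-1}(U_j)\}_{j\in\mathcal{J}}$ is an open cover of $(\mathcal{E},\tau)$.

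Next I would prove that each $\varphi_j\colon p^{-1}(U_j)\to U_j\times E_j$ is a homeomorphism for $\tau$. Openness is immediate: $p^{-1}(U_j)$ is $\tau$-open, so a typical open set of the subspace $p^{-1}(U_j)$ is a $\tau$-open $\mathcal{O}\subseteq p^{-1}(U_j)$, and $\varphi_j(\mathcal{O})=\varphi_j(\mathcal{O}\cap p^{-1}(U_j))$ is open by the very definition of $\tau$. Continuity of $\varphi_j$ is the step with genuine content and rests on the transition maps: over $U_i\cap U_j$ the composite $\varphi_i\circ\varphi_j^{-1}$ is $(\lambda,e)\mapsto\big(\lambda,(\varphi_i\circ\varphi_j^{-1})_\lambda e\big)$, and I would show this is a homeomorphism of $(U_i\cap U_j)\times E_j$ onto $(U_i\cap U_j)\times E_i$. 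Here the hypothesis that $\lambda\mapsto(\varphi_i\circ\varphi_j^{-1})_\lambda$ is norm-continuous into $GL(E_j,E_i)$ gives, via local boundedness, joint continuity of $(\lambda,e)\mapsto(\varphi_i\circ\varphi_j^{-1})_\lambda e$; and since inversion is norm-continuous on the open set $GL(E_j,E_i)$, the map $\lambda\mapsto(\varphi_i\circ\varphi_j^{-1})_\lambda^{-1}$ is norm-continuous as well, so the inverse map is jointly continuous by the same estimate. Granting this, for open $W\subseteq U_j\times E_j$ the set $\varphi_i\big(\varphi_j^{-1}(W)\cap p^{-1}(U_i)\big)$ equals $(\varphi_i\circ\varphi_j^{-1})\big(W\cap((U_i\cap U_j)\times E_j)\big)$, which is open in $U_i\times E_i$; hence $\varphi_j^{-1}(W)\in\tau$ and $\varphi_j$ is continuous.

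It then follows formally that $p$ is continuous, since $p=p_1\circ\varphi_j$ on the members of the open cover $\{p^{-1}(U_j)\}$. For the fibre structure I would transport the norm and linear operations from $E_j$ to $\mathcal{E}_x=p^{-1}(x)$ via $\varphi_j$ for any $j$ with $x\in U_j$; a different choice $i$ changes this data only by the linear topological isomorphism $(\varphi_i\circ\varphi_j^{-1})_x\in GL(E_j,E_i)$, so $\mathcal{E}_x$ is a well defined Banach space and each $\varphi_j$ restricts to a linear homeomorphism on fibres. This exhibits $(\mathcal{E},p,X)$ as a Banach bundle with trivialising covering $\{U_j\}_{j\in\mathcal{J}}$ and local trivialisations $\varphi_j$.

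Finally, for uniqueness, suppose $\tau'$ is any topology making $\mathcal{E}$ a Banach bundle with these same data. Then each $p^{-1}(U_j)$ is $\tau'$-open and each $\varphi_j$ is a $\tau'$-homeomorphism onto $U_j\times E_j$; since $\{p^{-1}(U_j)\}$ covers $\mathcal{E}$, a set $\mathcal{O}$ is $\tau'$-open if and only if $\mathcal{O}\cap p^{-1}(U_j)$ is $\tau'$-open for every $j$, if and only if $\varphi_j(\mathcal{O}\cap p^{-1}(U_j))$ is open in $U_j\times E_j$ for every $j$, i.e. if and only if $\mathcal{O}\in\tau$. Hence $\tau'=\tau$. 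The main obstacle in the whole argument is the homeomorphism property of the transition maps $\varphi_i\circ\varphi_j^{-1}$ on products; once that is settled, everything else is bookkeeping with the definition of $\tau$.
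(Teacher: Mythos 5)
Your proof is correct and complete. Note, however, that the paper does not prove this theorem at all: it is cited verbatim as a classical result (Steenrod, \emph{The Topology of Fibre Bundles}, Thm.~3.2; a Banach-bundle version appears in Lang's \emph{Differential and Riemannian Manifolds}), so there is no proof in the paper to compare against. Your argument is the standard one — declaring $\mathcal{O}$ open iff each $\varphi_j(\mathcal{O}\cap p^{-1}(U_j))$ is open, reducing the content to showing that $(\lambda,e)\mapsto(\lambda,(\varphi_i\circ\varphi_j^{-1})_\lambda e)$ is a homeomorphism of $(U_i\cap U_j)\times E_j$ onto $(U_i\cap U_j)\times E_i$ via norm-continuity plus local boundedness of operator families — and you have correctly identified and carried out the one step with genuine content. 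All the bookkeeping (that $\tau$ is a topology, that $p^{-1}(U_j)\in\tau$ via $p=p_1\circ\varphi_j$, openness and continuity of $\varphi_j$, continuity of $p$, transport of the Banach-space structure on fibres with well-definedness modulo $GL(E_j,E_i)$, and uniqueness by running the definition of $\tau$ backwards from any competing $\tau'$) is carried out correctly.
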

\noindent
If $\mathcal{A}:X\rightarrow\mathcal{CF}(H)$ is continuous with respect to the gap-topology on $\mathcal{CF}(H)$, then there is a family of projections $P:X\rightarrow\mathcal{L}(H\times H)$ such that $\im(P_\lambda)=\gra(\mathcal{A}_\lambda)$. We fix some $\lambda_0\in X$ and consider the open set 

\[U_{\lambda_0}:=\left\{\lambda\in X:d_G(\mathcal{A}_\lambda,\mathcal{A}_{\lambda_0})<\frac{1}{3}\right\}\subset X.\]
It can be shown by the Neumann series that

\[P_{\gra(\mathcal{A}_{\lambda_0})}\mid_{\gra(\mathcal{A}_\lambda)}:\gra(\mathcal{A}_\lambda)\rightarrow\gra(\mathcal{A}_{\lambda_0})\]
is an isomorphism for all $\lambda\in U_{\lambda_0}$, and that the map

\begin{align}\label{bundle1}
U_{\lambda_0}\ni \lambda\mapsto (P_{\gra(\mathcal{A}_{\lambda_0})}\mid_{\gra(\mathcal{A}_\lambda)})^{-1}\in\mathcal{L}(\gra(\mathcal{A}_{\lambda_0}),H\times H)
\end{align}
is continuous with respect to the norm topology on the latter space (see \cite{AbbondandoloMajerGrass} or \cite[\S 6.1]{thesis}). We now consider the disjoint union

\[\mathfrak{D}(\mathcal{A}):=\coprod_{\lambda\in X}\mathcal{D}(\mathcal{A}_\lambda)\]
and in what follows we denote by $\pi:\mathfrak{D}(\mathcal{A})\rightarrow X$ the canonical surjection. We define

\[\tau_{\lambda_0}:\pi^{-1}(U_{\lambda_0})\rightarrow U_{\lambda_0}\times\gra(\mathcal{A}_{\lambda_0}),\quad \tau_{\lambda_0}(\lambda,u)=(\lambda,P_{\gra(\mathcal{A}_{\lambda_0})}(u,\mathcal{A}_{\lambda}u)),\]
and note that $\gra(\mathcal{A}_{\lambda_0})\subset H\times H$ is a Banach space as $\mathcal{A}_{\lambda_0}$ is a closed operator. It is readily seen that this map has an inverse $\tau^{-1}_{\lambda_0}:U_{\lambda_0}\times\gra(\mathcal{A}_{\lambda_0})\rightarrow\pi^{-1}(U_{\lambda_0})$ given by

\[\tau^{-1}_{\lambda_0}(\lambda,u)=P_1(P_{\gra(\mathcal{A}_{\lambda_0})}\mid_{\gra(\mathcal{A}_\lambda)})^{-1},\]
where $P_1$ denotes the projection onto $H\times\{0\}$ in $H\times H$. Moreover, if $\lambda_1\in X$ is a point in our parameter space such that $U_{\lambda_0}\cap U_{\lambda_1}\neq\emptyset$, then

\[(\tau_{\lambda_1}\circ\tau^{-1}_{\lambda_0})_\lambda=P_{\gra(\mathcal{A}_{\lambda_1})}(P_{\gra(\mathcal{A}_{\lambda_0})}\mid_{\gra(\mathcal{A}_\lambda)})^{-1}\in \mathcal{L}(\gra(\mathcal{A}_{\lambda_0}),\gra(\mathcal{A}_{\lambda_1})),\] 
and, by \eqref{bundle1}, this depends continuously on $\lambda$. As the maps

\begin{align*}
P_{\gra(\mathcal{A}_{\lambda_0})}\mid_{\gra(\mathcal{A}_\lambda)}&:\gra(\mathcal{A}_\lambda)\rightarrow\gra(\mathcal{A}_{\lambda_0}),\\
P_{\gra(\mathcal{A}_{\lambda_1})}\mid_{\gra(\mathcal{A}_\lambda)}&:\gra(\mathcal{A}_\lambda)\rightarrow\gra(\mathcal{A}_{\lambda_1})
\end{align*} 
are isomorphisms, we finally see that 

\[(\tau_{\lambda_1}\circ\tau^{-1}_{\lambda_0})_\lambda\in GL(\gra(\mathcal{A}_{\lambda_0}),\gra(\mathcal{A}_{\lambda_1})),\quad \lambda\in U_{\lambda_0}\cap U_{\lambda_1}.\]
Hence, by Theorem \ref{bundleconstructiontheorem}, $\pi:\mathfrak{D}(\mathcal{A})\rightarrow X$ is a Hilbert bundle, which we we call the \textit{domain bundle} of the family $\mathcal{A}$.\\
Note that, by the definition of our trivialisations, the map $P_1\mid_{\gra(\mathcal{A}_\lambda)}:\gra(\mathcal{A}_\lambda)\rightarrow\mathfrak{D}(\mathcal{A})_\lambda$ is a topological isomorphism, where $\mathfrak{D}(\mathcal{A})_\lambda$ denotes the fibre of $\mathfrak{D}(\mathcal{A})$ over $\lambda\in X$. Hence the topology of $\mathfrak{D}(\mathcal{A})_\lambda$ is the same as the one induced by the graph norm of $\mathcal{A}_\lambda$. In particular, $\mathcal{A}_\lambda$ induces a bounded operator between $\mathfrak{D}(\mathcal{A})_\lambda$ and $H$. Actually, the family $\mathcal{A}$ is a bundle morphism between $\mathfrak{D}(\mathcal{A})$ and the product bundle $X\times H$, which can be seen as follows. If $U_{\lambda_0}$ is a trivialising neighbourhood about some $\lambda_0$ and $\tau_{\lambda_0}:\pi^{-1}(U_{\lambda_0})\rightarrow U_{\lambda_0}\times \gra(\mathcal{A}_{\lambda_0})$ a trivialisation as above, then

\[\mathcal{A}_\lambda(\tau^{-1}_{\lambda_0}(\lambda,\cdot))=P_2(P_{\gra(\mathcal{A}_{\lambda_0})}\mid_{\gra(\mathcal{A}_\lambda)})^{-1}\in\mathcal{L}(\gra(\mathcal{A}_{\lambda_0}),H),\]  
where $P_2$ is the projection onto $\{0\}\times H$ in $H\times H$. As these bounded operators depend continuously on $\lambda\in U_{\lambda_0}$ by \eqref{bundle1}, we see that $\mathcal{A}$ is indeed a bundle morphisms.\\
Let us now assume that $\mathcal{A}:X\rightarrow\mathcal{CF}(H)$ is a family of Fredholm operators and $Y\subset X$ a closed subset such that $\mathcal{A}_\lambda$ is invertible for all $\lambda\in Y$. As $\mathfrak{D}(\mathcal{A})_\lambda$ has the topology induced by the graph norm, we see that $\mathcal{A}_\lambda$ is a bounded Fredholm operator between this space and $H$. Hence we obtain a Fredholm morphism $\mathcal{A}:\mathfrak{D}(\mathcal{A})\rightarrow X\times H$, which is invertible for $\lambda\in X$ if and only if the unbounded operator $\mathcal{A}_\lambda$ is invertible.\\
We can now apply the index bundle construction for Fredholm morphisms between Banach bundles to obtain a $K$-theory class

\[\ind(\mathcal{A})\in K(X,Y),\]  
which we henceforth call the \textit{index bundle} of the family $\mathcal{A}:X\rightarrow\mathcal{CF}(H)$.\\
The following properties of the index bundle are straightforward consequences of the corresponding rules for Fredholm morphisms from the final part of the previous section.

\begin{itemize}
\item If $\mathcal{A}_\lambda\in\mathcal{CF}(H)$ is invertible for every $\lambda\in X$, then

\[\ind(\mathcal{A})=0\in K(X,Y).\]
\item Let $\mathcal{A}_1,\mathcal{A}_2:X\rightarrow\mathcal{CF}(H)$ be such that $\mathcal{A}_{1,\lambda}$ and $\mathcal{A}_{2,\lambda}$ are invertible for all $\lambda\in Y$. Then

\[\ind(\mathcal{A}_1\oplus\mathcal{A}_2)=\ind(\mathcal{A}_1)\oplus\ind(\mathcal{A}_2)\in K(X,Y).\]
\item If $h:I\times X\rightarrow\mathcal{CF}(H)$ is continuous and $h(s,\lambda)$ is invertible for all $s\in I$ and $\lambda\in Y$, then

\[\ind(h(0,\cdot))=\ind(h(1,\cdot))\in K(X,Y).\]
\item If $(X',Y')$ is another compact pair and $f:(X',Y')\rightarrow(X,Y)$ continuous, then $(f^\ast\mathcal{A})_\lambda=\mathcal{A}_{f(\lambda)}$ defines a gap-continuous family $f^\ast\mathcal{A}:X'\rightarrow\mathcal{CF}(H)$ such that $(f^\ast\mathcal{A})_\lambda$ is invertible for all $\lambda\in Y'$. Moreover,

\[\ind(f^\ast\mathcal{A})=f^\ast\ind(\mathcal{A})\in K(X',Y').\]  
\end{itemize}
\noindent
Note that the last property requires to show that $\mathfrak{D}(f^\ast\mathcal{A})=f^\ast\mathfrak{D}(\mathcal{A})$ which, however, readily follows from the definition of the domain bundle.\\
We will need in the proof of Theorem A the following important property of the index bundle, which was not shown in \cite{thesis} in this generality.

\begin{lemma}\label{gap-product}
Let $\mathcal{A}:X\rightarrow\mathcal{CF}(H)$ be gap-continuous and such that $\mathcal{A}_\lambda$ is invertible for all $\lambda\in Y\subset X$. If $M,N:X\rightarrow GL(H)$ are continuous families of invertible operators on $H$, then $M\mathcal{A}N$ is gap-continuous, and 

\[\ind(M\mathcal{A}N)=\ind(\mathcal{A})\in K(X,Y).\]
\end{lemma}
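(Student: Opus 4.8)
The plan is to reduce the statement to known facts about the index bundle of Fredholm morphisms between Banach bundles, using the logarithmic property together with the obvious triviality of the index bundle of an isomorphism. First I would address gap-continuity of $M\mathcal{A}N$. Since $M$ and $N$ are norm-continuous families of invertible \emph{bounded} operators, they are in particular gap-continuous; moreover, conjugation by such families is a homeomorphism of $\mathcal{C}(H)$ with respect to the gap-metric, because multiplication by a fixed invertible bounded operator on either side moves graphs by a fixed bounded invertible operator on $H\times H$ and hence depends continuously on the data. Concretely, $\gra(M_\lambda \mathcal{A}_\lambda N_\lambda)$ is the image of $\gra(\mathcal{A}_\lambda)$ under the map $(x,y)\mapsto(N_\lambda^{-1}x, M_\lambda y)$ of $H\times H$, and since $\lambda\mapsto(N_\lambda^{-1}, M_\lambda)$ is norm-continuous with values in $GL(H\times H)$, the associated orthogonal projections onto the graphs vary continuously. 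This also shows that $M_\lambda\mathcal{A}_\lambda N_\lambda$ is again closed and Fredholm (products of densely defined Fredholm operators with invertible bounded ones are Fredholm), and invertible precisely when $\mathcal{A}_\lambda$ is, so the new family has invertible restriction over $Y$ and $\ind(M\mathcal{A}N)\in K(X,Y)$ is defined.

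Next I would identify the domain bundle of $M\mathcal{A}N$ with that of $\mathcal{A}$ up to the bundle isomorphism induced by $N$. Since $\mathcal{D}(M_\lambda\mathcal{A}_\lambda N_\lambda)=N_\lambda^{-1}(\mathcal{D}(\mathcal{A}_\lambda))$ and $N_\lambda$ is bounded invertible, the fibrewise map $N_\lambda:\mathfrak{D}(M\mathcal{A}N)_\lambda\to\mathfrak{D}(\mathcal{A})_\lambda$ is a topological isomorphism in the graph norms, and one checks using the explicit trivialisations $\tau_{\lambda_0}$ from the construction of the domain bundle that it assembles into a bundle isomorphism $N:\mathfrak{D}(M\mathcal{A}N)\to\mathfrak{D}(\mathcal{A})$. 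Under this identification, the Fredholm morphism $M\mathcal{A}N:\mathfrak{D}(M\mathcal{A}N)\to X\times H$ factors as the composition $\mathfrak{D}(M\mathcal{A}N)\xrightarrow{N}\mathfrak{D}(\mathcal{A})\xrightarrow{\mathcal{A}}X\times H\xrightarrow{M}X\times H$ of bundle morphisms, where the first and last are bundle isomorphisms and the middle is the Fredholm morphism whose index bundle is $\ind(\mathcal{A})$ by definition.

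Finally I would invoke the logarithmic property and the normalisation property of the index bundle for Fredholm morphisms between Banach bundles, both recalled in the preceding section. Composing on the right by the bundle isomorphism $N$ and on the left by the bundle isomorphism $M$ does not change the index bundle: indeed $\ind(M\circ(\mathcal{A}\circ N))=\ind(M)+\ind(\mathcal{A}\circ N)=\ind(\mathcal{A}\circ N)$ since $\ind$ of an isomorphism vanishes, and similarly $\ind(\mathcal{A}\circ N)=\ind(\mathcal{A})$ because $N$ is an isomorphism (this last step uses the logarithmic rule in the form $\ind(L\circ T)=\ind(L)+\ind(T)=\ind(L)$ for $T$ an isomorphism, or equivalently the invariance under fibrewise composition with isomorphisms). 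Chaining these gives $\ind(M\mathcal{A}N)=\ind(\mathcal{A})\in K(X,Y)$.

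The main obstacle I expect is the bookkeeping in the second step: one must verify that the fibrewise isomorphism induced by $N$ is genuinely continuous as a bundle map, i.e. that it transforms the trivialising charts $\tau_{\lambda_0}$ of $\mathfrak{D}(M\mathcal{A}N)$ into those of $\mathfrak{D}(\mathcal{A})$ with norm-continuous transition, and that the factorisation of $M\mathcal{A}N$ through $\mathcal{A}$ holds as a diagram of \emph{bundle} morphisms (not just fibrewise). This amounts to unwinding the definitions of the graph projections $P_{\gra(\mathcal{A}_\lambda)}$ and the associated trivialisations and using the norm-continuity of $\lambda\mapsto (N_\lambda, M_\lambda)$; it is routine but is the only place where the gap-topology machinery must be handled with care rather than cited outright. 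Everything else is a direct application of properties already established for the index bundle of Fredholm morphisms.
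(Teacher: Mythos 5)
Your proof of gap-continuity is essentially the same as the paper's: both realise $\gra(M_\lambda\mathcal{A}_\lambda N_\lambda)$ as the image of $\gra(\mathcal{A}_\lambda)$ under the invertible block-diagonal map $\diag(N_\lambda^{-1},M_\lambda)$ on $H\times H$, and conclude that the orthogonal graph projections vary continuously (the paper makes this quantitative via \cite[Thm.\ I.6.35]{Kato}, comparing orthogonal projections to the continuously varying oblique projections $U_\lambda P_{\gra(\mathcal{A}_\lambda)}U_\lambda^{-1}$, but the idea is identical).

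For the index-bundle equality you take a genuinely different route. The paper simply invokes Kuiper's theorem to homotope $M$ and $N$ to the constant family $I_H$; the homotopy $M_s\mathcal{A}N_s$ is gap-continuous by the first part of the proof, and then homotopy invariance of the index bundle gives $\ind(M\mathcal{A}N)=\ind(\mathcal{A})$ directly, with no need to analyse the domain bundle of $M\mathcal{A}N$ at all. You instead identify $\mathfrak{D}(M\mathcal{A}N)\cong\mathfrak{D}(\mathcal{A})$ via the fibrewise isomorphism $N_\lambda$, factor $M\mathcal{A}N$ as $M\circ\mathcal{A}\circ N$ at the level of Banach bundle morphisms, and chain the logarithmic and normalisation rules. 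This is correct in principle and perhaps conceptually more transparent — it explains \emph{why} the index bundle is unchanged rather than merely deforming the problem away — but it pushes the burden onto verifying that $N$ really is a morphism of Banach bundles in the trivialisations $\tau_{\lambda_0}$ (you flag this yourself, and it is not quite as routine as the rest, since it requires chasing graph projections for two different families simultaneously). The paper's Kuiper argument sidesteps this entirely, at the price of using the rather heavy fact that $GL(H)$ is contractible; yours works with no contractibility input and would even survive in settings where Kuiper's theorem fails, provided the bundle-morphism bookkeeping is carried out. Both arguments are valid; if you keep your version, you should actually supply the verification that $N:\mathfrak{D}(M\mathcal{A}N)\to\mathfrak{D}(\mathcal{A})$ is continuous in the charts, since that is the only non-formal step.
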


\begin{proof}
Note that

\begin{align*}
\gra(M_\lambda\mathcal{A}_\lambda N_\lambda)&=\{(u,M_\lambda\mathcal{A}_\lambda N_\lambda u):\, u\in N^{-1}_\lambda(\mathcal{D}(\mathcal{A}_\lambda))\}=\{(N^{-1}_\lambda v,M_\lambda\mathcal{A}_\lambda v):\,v\in\mathcal{D}(\mathcal{A}_\lambda)\}\\
&=\begin{pmatrix}
N^{-1}_\lambda&0\\
0&M_\lambda
\end{pmatrix}\,\gra(\mathcal{A}_\lambda)=:U_\lambda\gra(\mathcal{A}_\lambda)\subset H\times H,
\end{align*}
and so $\{U_\lambda P_{\gra(\mathcal{A}_\lambda)}U^{-1}_\lambda\}_{\lambda\in X}$ is a continuous family of oblique projections onto\linebreak $\{\gra(M_\lambda\mathcal{A}_\lambda N_\lambda)\}_{\lambda\in X}$ in $\mathcal{L}(H\times H)$. By \cite[Thm. I.6.35]{Kato}, the corresponding orthogonal projections $P_{\gra(M_\lambda\mathcal{A}_\lambda N_\lambda)}$ onto $\gra(M_\lambda\mathcal{A}_\lambda N_\lambda)$ satisfy

\[\|P_{\gra(M_\mu\mathcal{A}_\mu N_\mu)}-P_{\gra(M_\lambda\mathcal{A}_\lambda N_\lambda)}\|\leq \|U_\mu P_{\gra(\mathcal{A}_\mu)}U^{-1}_\mu-U_\lambda P_{\gra(\mathcal{A}_\lambda)}U^{-1}_\lambda\|,\quad \mu,\lambda\in X,\]
and consequently $\{P_{\gra(M_\lambda\mathcal{A}_\lambda N_\lambda)}\}_{\lambda\in X}$ is continuous. This shows that $M\mathcal{A}N$ is gap-continuous.\\ 
For the second claim, we just need to note that, by Kuiper's Theorem, $M$ and $N$ are homotopic to the constant family $G_\lambda=I_H$, $\lambda\in X$. Hence we obtain by the homotopy invariance

\begin{align*}
\ind(M\mathcal{A}N)=\ind(\mathcal{A})\in K(X,Y),
\end{align*}
where the continuity of the homotopy follows as in the first part of this proof.
\end{proof}


\subsection{Spectral Flow and the Index Bundle}\label{section-sflbundle}
We now consider families $\mathcal{A}:X\rightarrow\mathcal{CF}^\textup{sa}(H)$, and we assume again that $Y\subset X$ is a closed subset such that $\mathcal{A}_\lambda$ is invertible for $\lambda\in Y$. We note at first that $\ind(\mathcal{A})=0\in K(X,Y)$ in this case, which follows by deforming $\mathcal{A}$ to the family $\mathcal{A}+iI_H$. As the operators $\mathcal{A}_\lambda$ are selfadjoint, $\mathcal{A}_\lambda+iI_H$ is invertible for any $\lambda$ and so this family has indeed a trivial index bundle.\\
We now define a family

\[\hat{\mathcal{A}}:X\times\mathbb{R}\rightarrow\mathcal{CF}(H)\]
where $\mathcal{D}(\hat{\mathcal{A}}_{(\lambda,s)})=\mathcal{D}(\mathcal{A}_\lambda)$ for $(\lambda,s)\in X\times\mathbb{R}$ and

\[\hat{\mathcal{A}}_{(\lambda,s)}=\mathcal{A}_\lambda+i\,s\,I_H.\]
Note that $\hat{\mathcal{A}}_{(\lambda,s)}$ is invertible if $s\neq 0$ as $\mathcal{A}_\lambda$ is selfadjoint. Hence, $\hat{\mathcal{A}}_{(\lambda,s)}$ is in $\mathcal{CF}(H)$ for all $(\lambda,s)\in X\times\mathbb{R}$. 

\begin{lemma}
The family $\hat{\mathcal{A}}:X\times\mathbb{R}\rightarrow\mathcal{CF}(H)$ is gap-continuous.
\end{lemma}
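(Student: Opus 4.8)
The plan is to represent the graph of $\hat{\mathcal{A}}_{(\lambda,s)}$ as the image of $\gra(\mathcal{A}_\lambda)$ under a bounded invertible operator on $H\times H$ that depends norm-continuously on $s$, and then to copy verbatim the argument from the proof of Lemma \ref{gap-product}. Concretely, for $s\in\mathbb{R}$ I would introduce
\[\Phi_s=\begin{pmatrix} I_H & 0\\ isI_H & I_H\end{pmatrix}\in GL(H\times H),\qquad \Phi^{-1}_s=\begin{pmatrix} I_H & 0\\ -isI_H & I_H\end{pmatrix},\]
which satisfies $\Phi_s(u,v)=(u,isu+v)$, so that $\gra(\hat{\mathcal{A}}_{(\lambda,s)})=\Phi_s(\gra(\mathcal{A}_\lambda))$ for every $\lambda\in X$. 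First I would note that this already re-proves that $\hat{\mathcal{A}}_{(\lambda,s)}\in\mathcal{C}(H)$, since $\gra(\mathcal{A}_\lambda)$ is closed ($\mathcal{A}_\lambda$ being selfadjoint) and $\Phi_s$ is a homeomorphism of $H\times H$; and that the first component of $\Phi_s$ being the identity forces $\mathcal{D}(\hat{\mathcal{A}}_{(\lambda,s)})=\mathcal{D}(\mathcal{A}_\lambda)$, as required in the definition of $\hat{\mathcal{A}}$.

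Next I would observe that $\Phi_s P_{\gra(\mathcal{A}_\lambda)}\Phi^{-1}_s$ is an (in general oblique) projection in $\mathcal{L}(H\times H)$ onto $\gra(\hat{\mathcal{A}}_{(\lambda,s)})$, and that the family $\{\Phi_s P_{\gra(\mathcal{A}_\lambda)}\Phi^{-1}_s\}_{(\lambda,s)\in X\times\mathbb{R}}$ is jointly continuous in $(\lambda,s)$. Indeed, the gap-continuity of $\mathcal{A}$ gives norm-continuity of $\lambda\mapsto P_{\gra(\mathcal{A}_\lambda)}$, the maps $s\mapsto\Phi_s$ and $s\mapsto\Phi^{-1}_s$ are norm-continuous, and $\|P_{\gra(\mathcal{A}_\lambda)}\|= 1$, so a single triangle inequality
\[\|\Phi_s P_{\gra(\mathcal{A}_\lambda)}\Phi^{-1}_s-\Phi_{s_0}P_{\gra(\mathcal{A}_{\lambda_0})}\Phi^{-1}_{s_0}\|\leq\|\Phi_s P_{\gra(\mathcal{A}_\lambda)}\Phi^{-1}_s-\Phi_{s_0}P_{\gra(\mathcal{A}_\lambda)}\Phi^{-1}_{s_0}\|+\|\Phi_{s_0}\|\,\|P_{\gra(\mathcal{A}_\lambda)}-P_{\gra(\mathcal{A}_{\lambda_0})}\|\,\|\Phi^{-1}_{s_0}\|\]
splits the estimate into an $s$-part and a $\lambda$-part, each tending to $0$. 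Finally, exactly as in the proof of Lemma \ref{gap-product}, I would invoke \cite[Thm. I.6.35]{Kato} to pass from these oblique projections to the orthogonal ones, obtaining
\[d_G(\hat{\mathcal{A}}_{(\lambda,s)},\hat{\mathcal{A}}_{(\lambda_0,s_0)})\leq\|\Phi_s P_{\gra(\mathcal{A}_\lambda)}\Phi^{-1}_s-\Phi_{s_0}P_{\gra(\mathcal{A}_{\lambda_0})}\Phi^{-1}_{s_0}\|\longrightarrow 0\]
as $(\lambda,s)\to(\lambda_0,s_0)$, which is the assertion.

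I do not expect a serious obstacle: this is a routine reduction to material already established. The only point that needs a little care is obtaining joint continuity in $(\lambda,s)$ rather than just separate continuity in each variable, and that is precisely what the displayed triangle inequality delivers, since $\Phi_{s_0},\Phi^{-1}_{s_0}$ are bounded independently of $\lambda$ and $\|P_{\gra(\mathcal{A}_\lambda)}\|=1$. A slightly softer alternative would be to combine the gap-continuity of $\mathcal{A}$ with the fact, already used in the proof of Lemma \ref{sflperturbation} via \cite[Thm. IV.2.17]{Kato}, that adding a norm-continuous family of bounded operators to a gap-continuous family of closed operators preserves gap-continuity; but extracting joint continuity from that would require a quantitative version, so the $\Phi_s$-representation is the cleaner route.
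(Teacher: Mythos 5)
Your proposal is correct and uses the same essential ingredient as the paper's proof — an invertible map on $H\times H$ that implements the shift by $isI_H$ on graphs, together with \cite[Thm. I.6.35]{Kato} to pass from oblique to orthogonal projections — but it organises the argument more economically. The paper first splits $d_G(\hat{\mathcal{A}}_{(\lambda,s)},\hat{\mathcal{A}}_{(\lambda_0,s_0)})$ by the triangle inequality into a $\lambda$-increment and an $s$-increment, bounds the $\lambda$-increment via \cite[Thm. IV.2.17]{Kato} (introducing the factor $2(1+s^2)$), and only then brings in the graph isomorphism $U_s(u,v)=(u,v-i(s-s_0)u)$, which is anchored at a fixed $\lambda_0$, to handle the $s$-increment with \cite[Thm. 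I.6.35]{Kato}. You instead observe that $\gra(\hat{\mathcal{A}}_{(\lambda,s)})=\Phi_s(\gra(\mathcal{A}_\lambda))$ with a $\lambda$-independent $\Phi_s$, form the jointly continuous family of oblique projections $\Phi_s P_{\gra(\mathcal{A}_\lambda)}\Phi_s^{-1}$, dominate the gap metric by a single application of \cite[Thm. I.6.35]{Kato}, and split the resulting operator norm by the triangle inequality afterwards — so you never need \cite[Thm. IV.2.17]{Kato} at all. This is a genuine, if modest, streamlining: one appeal to Kato rather than two, a unified treatment of both variables, and the uniformity in $\lambda$ of the $s$-part comes for free from $\|P_{\gra(\mathcal{A}_\lambda)}\|=1$, as you note. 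In effect your argument is closer in structure to the paper's own proof of Lemma \ref{gap-product} than to its proof of this lemma, and could equally well be phrased as a corollary of that lemma applied to the families $M_s=\Phi_s$-conjugated graphs, which makes the parallel transparent.
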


\begin{proof}
We let $\lambda_0\in X$, $s_0\in\mathbb{R}$, and obtain from the triangle inequality

\begin{align}\label{dGtriangle}
\begin{split}
d_G(\mathcal{A}_\lambda+is\, I_H,\mathcal{A}_{\lambda_0}+is_0\, I_H)\leq&  d_G(\mathcal{A}_\lambda+is\, I_H,\mathcal{A}_{\lambda_0}+is\, I_H)\\
+&d_G(\mathcal{A}_{\lambda_0}+is\, I_H,\mathcal{A}_{\lambda_0}+is_0\, I_H).
\end{split}
\end{align}
By \cite[Thm. IV.2.17]{Kato}, we have  

\begin{align}\label{gap-estimate}
d_G(\mathcal{A}_\lambda+is\, I_H,\mathcal{A}_{\lambda_0}+is\, I_H)\leq 2(1+s^2)\,d_G(\mathcal{A}_\lambda,\mathcal{A}_{\lambda_0}).
\end{align}
For the remaining term, we note that the family of isomorphisms

\[U_s:H\times H\rightarrow H\times H,\quad U_s(u,v)=(u,v-i(s-s_0)u)\]
maps $\gra(\mathcal{A}_{\lambda_0}+is\, I_H)$ to $\gra(\mathcal{A}_{\lambda_0}+is_0\, I_H)$. Hence $U^{-1}_s P_{\gra(\mathcal{A}_{\lambda_0}+is_0\, I_H)}U_s$ is an oblique projection onto $\gra(\mathcal{A}_{\lambda_0}+is\, I_H)$. The rest of the proof is similar to Lemma \ref{gap-product}. By \cite[Thm. I.6.35]{Kato}, the corresponding orthogonal projections $P_{\gra(\mathcal{A}_{\lambda_0}+is\, I_H)}$ onto $\gra(\mathcal{A}_{\lambda_0}+is\, I_H)$ satisfy

\[\|P_{\gra(\mathcal{A}_{\lambda_0}+is\, I_H)}-P_{\gra(\mathcal{A}_{\lambda_0}+is_0\, I_H)}\|\leq \|U^{-1}_s P_{\gra(\mathcal{A}_{\lambda_0}+is_0\, I_H)}U_s- P_{\gra(\mathcal{A}_{\lambda_0}+is_0\, I_H)}\|,\]
where we use that $U_{s_0}=I_{H\oplus H}$. This shows the continuity of $\{P_{\gra(\mathcal{A}_{\lambda_0}+is\, I_H)}\}_{s\in\mathbb{R}}$ at $s_0$, and so we obtain the continuity of $\hat{\mathcal{A}}$ from \eqref{dGtriangle}.
\end{proof}
\noindent
Let us point out that our family $\hat{\mathcal{A}}$ is parametrised by the non-compact topological space $X\times\mathbb{R}$ and so we cannot apply the index bundle construction from the previous section. However, using that $\hat{\mathcal{A}}_{(\lambda,s)}$ is invertible for all $(\lambda,s)$ that are outside of the compact space $X\times\{0\}$, it is readily seen that the previous construction carries over to this slightly more general setting. Actually, we only need to use that there is a bundle as in \eqref{transversal} for the restricted family $\hat{\mathcal{A}}\mid_{X\times\{0\}}=\mathcal{A}$, which is parametrised by a compact space. However, for later reference, we note the following refined version of \eqref{transversal}.

\begin{lemma}\label{kerneltransversal}
Let $\mathcal{A}=\{\mathcal{A}_\lambda\}_{\lambda\in X}$ be a gap-continuous family in $\mathcal{CF}^\textup{sa}(H)$. Then there are $\lambda_1,\ldots,\lambda_m\in X$ such that

\begin{align}\label{transversalselfadjointspace}
V:=\ker(\mathcal{A}_{\lambda_1})+\cdots +\ker(\mathcal{A}_{\lambda_m})
\end{align}
satisfies

\begin{align}\label{transversalselfadjoint}
\im(\mathcal{A}_\lambda)+V=H,\quad \lambda\in X.
\end{align}
\end{lemma}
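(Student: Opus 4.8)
The plan is to combine compactness of $X$ with the fact that, for a \emph{fixed} finite-dimensional subspace, transversality to the image of a Fredholm operator is an open condition along the family; the finitely many kernels needed for \eqref{transversalselfadjointspace} are then harvested from a finite subcover.

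\textbf{Step 1.} I would first fix $\mu\in X$ and set $K_\mu:=\ker(\mathcal{A}_\mu)$, which is finite-dimensional. Since $\mathcal{A}_\mu\in\mathcal{CF}^\textup{sa}(H)\subset\mathcal{CF}_0(H)$, the range $\im(\mathcal{A}_\mu)$ is closed and $\im(\mathcal{A}_\mu)^\perp=\ker(\mathcal{A}_\mu^\ast)=\ker(\mathcal{A}_\mu)$, so that $H=\im(\mathcal{A}_\mu)\oplus K_\mu$; in particular $\im(\mathcal{A}_\mu)+K_\mu=H$.

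\textbf{Step 2.} Next I would show that $U_\mu:=\{\lambda\in X:\,\im(\mathcal{A}_\lambda)+K_\mu=H\}$ is open. Recall from the construction just carried out that $\mathcal{A}$ is a continuous Fredholm morphism $\mathcal{A}:\mathfrak{D}(\mathcal{A})\to X\times H$. Adjoining the inclusion $\iota:X\times K_\mu\hookrightarrow X\times H$ of the product bundle with fibre $K_\mu$, one obtains a continuous Fredholm morphism $\mathcal{A}\oplus\iota:\mathfrak{D}(\mathcal{A})\oplus(X\times K_\mu)\to X\times H$, whose fibre $(u,k)\mapsto\mathcal{A}_\lambda u+k$ over $\lambda$ is surjective exactly when $\im(\mathcal{A}_\lambda)+K_\mu=H$. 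A surjective bounded Fredholm operator $L$ admits a bounded right inverse $R$, and $L'R=I+(L'-L)R$ is invertible once $L'$ is close to $L$ in norm, so surjectivity is an open condition; reading this in a local trivialisation of $\mathfrak{D}(\mathcal{A})$ around $\mu$, in which $\mathcal{A}\oplus\iota$ is a norm-continuous family of bounded Fredholm operators between fixed spaces, shows that $U_\mu$ is open. By Step 1, $\mu\in U_\mu$.

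\textbf{Step 3.} Since $X$ is compact and $\{U_\mu\}_{\mu\in X}$ is an open cover, I would choose a finite subcover $U_{\lambda_1},\dots,U_{\lambda_m}$ and set $V:=\ker(\mathcal{A}_{\lambda_1})+\cdots+\ker(\mathcal{A}_{\lambda_m})$ as in \eqref{transversalselfadjointspace}; this is finite-dimensional as a finite sum of finite-dimensional subspaces. Given $\lambda\in X$, pick $j$ with $\lambda\in U_{\lambda_j}$; then $H=\im(\mathcal{A}_\lambda)+\ker(\mathcal{A}_{\lambda_j})\subset\im(\mathcal{A}_\lambda)+V$, which is exactly \eqref{transversalselfadjoint}.

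The only delicate point is Step 2: because the domains $\mathcal{D}(\mathcal{A}_\lambda)$ vary, one cannot directly speak of norm-continuity of $\lambda\mapsto\mathcal{A}_\lambda$, and one must instead use that gap-continuity of $\mathcal{A}$ is, by the very construction of the domain bundle $\mathfrak{D}(\mathcal{A})$, the same as continuity of $\mathcal{A}$ as a bundle morphism into $X\times H$ — this is what makes the right-inverse perturbation argument legitimate in local trivialisations. Once that is granted, the rest of the argument is soft.
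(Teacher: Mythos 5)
Your proof is correct and takes essentially the same route as the paper: trivialise the domain bundle to reduce to a norm-continuous family of bounded Fredholm operators, use a bounded right inverse plus a Neumann-series perturbation to show that surjectivity (equivalently, transversality of the range to a fixed finite-dimensional subspace) is locally stable, and then invoke compactness of $X$ to extract finitely many kernels. The only cosmetic differences are that the paper tests surjectivity by post-composing with the orthogonal projection onto $\im(\mathcal{A}_{\lambda_0})$ whereas you augment the domain by $K_\mu$, and the paper restricts the covering argument to the closed set of non-invertible parameters $\Sigma$ whereas you cover all of $X$ directly, absorbing the invertible case automatically since $K_\mu=\{0\}$ there.
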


\begin{proof}
We note at first that the assertion is obviously true if $\mathcal{A}_\lambda$ is invertible for all $\lambda\in X$. Let us now assume that there is $\lambda_0\in X$ such that $\ker(\mathcal{A}_{\lambda_0})\neq\{0\}$. Let 

\[\psi:\mathfrak{D}(\mathcal{A})\mid_U\rightarrow U\times\gra(\mathcal{A}_{\lambda_0})\]
be a trivialisation of the domain bundle $\mathfrak{D}(\mathcal{A})$ in a neighbourhood $U$ of $\lambda_0$, and let us consider the bounded Fredholm operators $L_\lambda:=\mathcal{A}_\lambda\circ\psi^{-1}_\lambda:\gra(\mathcal{A}_{\lambda_0})\rightarrow H$ for $\lambda\in U$. If $P$ denotes the orthogonal projection onto the closed subspace $\im(\mathcal{A}_{\lambda_0})\subset H$, then the composition

\[\gra(\mathcal{A}_{\lambda_0})\xrightarrow{L_{\lambda_0}} H\xrightarrow{P}\im(\mathcal{A}_{\lambda_0})\]
is surjective. By \cite[Thm. XI.6.1]{Gohberg}, there exists a bounded right inverse $M$, i.e. $(P\circ L_{\lambda_0})\circ M=I_{\im(\mathcal{A}_{\lambda_0})}$. As $GL(\im(\mathcal{A}_{\lambda_0}))$ is open in $\mathcal{L}(\im(\mathcal{A}_{\lambda_0}))$ in the norm topology, we see that there is a neighbourhood $U_{\lambda_0}\subset U$ such that $(P\circ L_\lambda)\circ M\in GL(\im(\mathcal{A}_{\lambda_0}))$ for all $\lambda\in U_{\lambda_0}$. Consequently, $P\circ L_\lambda$ is surjective or, equivalently,

\[\im(\mathcal{A}_\lambda) +\ker(\mathcal{A}_{\lambda_0})=\im(L_\lambda)+\ker(\mathcal{A}_{\lambda_0})=H,\quad\lambda\in U_{\lambda_0},\]
where we have used that $\im(\mathcal{A}_{\lambda_0})^\perp=\ker(\mathcal{A}_{\lambda_0})$.\\
Let us now denote by $\Sigma\subset X$ the set of all $\lambda\in X$ such that $\ker(\mathcal{A}_\lambda)$ is non-invertible. As the set of invertible elements in $\mathcal{C}(H)$ is open by \cite[Thm. IV.5.2.21]{Kato}, we see that $\Sigma$ is closed as preimage of a closed set under the continuous map $\mathcal{A}:X\rightarrow\mathcal{CF}^\textup{sa}(H)$. Hence, as $X$ is compact, we can find $\lambda_1,\ldots,\lambda_m$ such that the corresponding neighbourhoods $\{U_{\lambda_i}\}_{i=1,\ldots,m}$ from the first step of the proof are a finite open cover of $\Sigma$. Finally, we set as in \eqref{transversalselfadjointspace}

\[V:=\ker(\mathcal{A}_{\lambda_1})+\cdots+\ker(\mathcal{A}_{\lambda_m})\]
and note that this space indeed satisfies \eqref{transversalselfadjoint}.
\end{proof}
\noindent 
Let us now continue with the construction of the index bundle for our family $\mathcal{A}:X\rightarrow\mathcal{CF}^\textup{sa}(H)$. By the previous Lemma \ref{kerneltransversal}, there is $V\subset H$ such that

\begin{align}\label{transversalselfadjointV}
\im(\hat{\mathcal{A}}_{(\lambda,s)})+V=H,\quad (\lambda,s)\in X\times\mathbb{R},
\end{align}
where we use that $\hat{\mathcal{A}}_{(\lambda,s)}$ is invertible if $s\neq 0$. Hence the domain bundle $E(\hat{\mathcal{A}},V)$ is defined and $\hat{\mathcal{A}}$ induces a Fredholm morphism $E(\hat{\mathcal{A}},V)\rightarrow\Theta(V)$, where $\Theta(V)$ now denotes the product bundle with fibre $V$ over $X\times\mathbb{R}$. Note that $\hat{\mathcal{A}}$ is invertible outside the compact set $X\times\{0\}$ and for all $(\lambda,0)\in Y\times\mathbb{R}$. Consequently, we obtain an odd $K$-theory class

\[\sind(\mathcal{A}):=[E(\hat{\mathcal{A}},V),\Theta(V),\hat{\mathcal{A}}]\in K(X\times\mathbb{R},Y\times\mathbb{R})=K^{-1}(X,Y),\]
which we call the \textit{index bundle} of the selfadjoint family $\mathcal{A}$. As in the non-selfadjoint case, it is not very difficult to see that $\sind(\mathcal{A})$ is well defined, i.e. it does not depend on the choice of the space $V$ in \eqref{transversalselfadjointV}. Moreover, we note the following properties:

\begin{itemize}
\item If $\mathcal{A}_\lambda$ is invertible for all $\lambda\in I$, then $\sind(\mathcal{A})=0\in K^{-1}(X,Y)$.
\item Let $\mathcal{A}_1,\mathcal{A}_2:X\rightarrow\mathcal{CF}^\textup{sa}(H)$ be two gap-continuous families such that $\mathcal{A}_{1,\lambda}$ and $\mathcal{A}_{2,\lambda}$ are invertible for all $\lambda\in Y$, then

\[\sind(\mathcal{A}_1\oplus\mathcal{A}_2)=\sind(\mathcal{A}_1)+\sind(\mathcal{A}_2)\in K^{-1}(X,Y).\]
\item If $h:I\times X\rightarrow\mathcal{CF}^\textup{sa}(H)$ is gap-continuous and $h(s,\lambda)$ is invertible for all $s\in I$ and $\lambda\in Y$, then

\[\sind(h(0,\cdot))=\sind(h(1,\cdot))\in K^{-1}(X,Y).\]  
\end{itemize}
\noindent
We discussed in the previous section the index bundle for gap-continuous families under pullbacks. For the corresponding result in the selfadjoint case, we need to introduce a further notation. If $(X',Y')$ is another compact pair and $f:(X',Y')\rightarrow(X,Y)$ continuous, then we set

\[\overline{f}:X'\times\mathbb{R}\rightarrow X\times\mathbb{R},\quad \overline{f}(\lambda,s)=(f(\lambda),s),\]
and note that

\begin{itemize}
\item $\sind(f^\ast\mathcal{A})=\overline{f}^\ast\sind(\mathcal{A})\in K^{-1}(X,Y)$, where, as before, $(f^\ast\mathcal{A})_\lambda=\mathcal{A}_{f(\lambda)}$.
\end{itemize}
\noindent
Finally, we obtain from Corollary \ref{cor-orthequ} and Lemma \ref{gap-product} the following result.

\begin{lemma}\label{MANself}
Assume that $\mathcal{A}:X\rightarrow\mathcal{CF}^\textup{sa}(H)$ is such that $\mathcal{A}_\lambda$ is invertible for all $\lambda\in Y$. Let $M:X\rightarrow GL(H)$ be a family of bounded invertible operators and let us denote by $M^\ast_\lambda$ the adjoint of $M_\lambda$. Then

\[\sind(M^\ast \mathcal{A}M)=\sind(\mathcal{A})\in K^{-1}(X,Y).\]
\end{lemma}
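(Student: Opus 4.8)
The plan is to exhibit $M^\ast\mathcal{A}M$ and $\mathcal{A}$ as the two endpoints of a gap-continuous homotopy of selfadjoint families which is invertible over $Y$, and then to quote the homotopy invariance of $\sind$ recorded above. As a preliminary step I would check that $\sind(M^\ast\mathcal{A}M)$ is even defined. By Corollary \ref{cor-orthequ} we have $M_\lambda^\ast\mathcal{A}_\lambda M_\lambda\in\mathcal{CF}^\textup{sa}(H)$ for every $\lambda\in X$, and for $\lambda\in Y$ this operator is invertible, being the composition of the three invertible operators $M_\lambda^\ast$, $\mathcal{A}_\lambda$, $M_\lambda$. Moreover $M^\ast\mathcal{A}M$ is gap-continuous: this is the first assertion of Lemma \ref{gap-product}, applied to the invertible families $M^\ast$ and $M$, where one uses that $\lambda\mapsto M_\lambda^\ast$ is a continuous $GL(H)$-valued family since taking adjoints is an isometry of $\mathcal{L}(H)$ and inversion is continuous on $GL(H)$. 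Hence $\sind(M^\ast\mathcal{A}M)\in K^{-1}(X,Y)$ makes sense.

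Next, exactly as in the proof of Lemma \ref{gap-product}, Kuiper's Theorem provides a homotopy $\mathcal{M}:I\times X\rightarrow GL(H)$ with $\mathcal{M}(0,\lambda)=M_\lambda$ and $\mathcal{M}(1,\lambda)=I_H$ for all $\lambda\in X$. I would then set
\[
h:I\times X\rightarrow\mathcal{CF}^\textup{sa}(H),\qquad h(r,\lambda)=\mathcal{M}(r,\lambda)^\ast\,\mathcal{A}_\lambda\,\mathcal{M}(r,\lambda),
\]
which indeed takes values in $\mathcal{CF}^\textup{sa}(H)$ by Corollary \ref{cor-orthequ}. Applying Lemma \ref{gap-product} over the compact base $I\times X$, to the (trivially gap-continuous) family $(r,\lambda)\mapsto\mathcal{A}_\lambda$ and the continuous $GL(H)$-valued families $\mathcal{M}^\ast$ and $\mathcal{M}$, shows that $h$ is gap-continuous. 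For every $r\in I$ and $\lambda\in Y$ the operator $h(r,\lambda)$ is invertible, again as a composition of invertible operators. Since $h(0,\cdot)=M^\ast\mathcal{A}M$ and $h(1,\cdot)=\mathcal{A}$, the homotopy invariance of the selfadjoint index bundle yields
\[
\sind(M^\ast\mathcal{A}M)=\sind(h(0,\cdot))=\sind(h(1,\cdot))=\sind(\mathcal{A})\in K^{-1}(X,Y),
\]
which is the claim.

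I do not expect a genuine obstacle here: conceptually all the content has already been packaged into Corollary \ref{cor-orthequ}, Lemma \ref{gap-product}, Kuiper's Theorem and the homotopy invariance of $\sind$. The only points needing a moment's care are bookkeeping: that $\lambda\mapsto M_\lambda^\ast$ is a legitimate continuous $GL(H)$-valued family, so that Lemma \ref{gap-product} applies, and that the gap-continuity half of Lemma \ref{gap-product} (whose proof makes no use of invertibility over $Y$) may be invoked over the enlarged base $I\times X$; both are immediate from the constructions already in place.
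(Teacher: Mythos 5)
Your proof is correct and is precisely the argument the paper has in mind: the paper itself gives no written proof and simply states that the lemma follows from Corollary \ref{cor-orthequ} and Lemma \ref{gap-product}, which are exactly the two ingredients you assemble, together with Kuiper's Theorem and the homotopy invariance of $\sind$ (the same strategy already used in the proof of the second assertion of Lemma \ref{gap-product}). Your bookkeeping remarks --- that $\lambda\mapsto M_\lambda^\ast$ is a continuous $GL(H)$-valued family and that the gap-continuity half of Lemma \ref{gap-product} is independent of invertibility over $Y$ and applies over the enlarged base $I\times X$ --- are exactly the points one needs to check.
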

\noindent
In order to discuss Theorem C, we now consider the case that $(X,Y)=(I,\partial I)$. Note that there is an isomorphism $c_1:K^{-1}(I,\partial I)\rightarrow\mathbb{Z}$ which we recall in Appendix \ref{app-K}. Hence we can assign to any path in $\mathcal{CF}^\textup{sa}(H)$ having invertible endpoints an integer as first Chern number of its index bundle.

\begin{theoremC}
Let $\mathcal{A}=\{\mathcal{A}_\lambda\}_{\lambda\in I}$ be a path in $\mathcal{CF}^\textup{sa}(H)$ such that $\mathcal{A}_0$ and $\mathcal{A}_1$ are invertible. Then

\[\sfl(\mathcal{A})=c_1(\sind(\mathcal{A}))\in\mathbb{Z}.\]
\end{theoremC}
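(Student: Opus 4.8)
The plan is to identify $c_1\circ\sind$ with the spectral flow by means of Lesch's axiomatic characterisation, Theorem \ref{LeschUniqueness}. Thus I would introduce
\[
\mu:\Omega(\mathcal{CF}^\textup{sa}(H),G\mathcal{C}^\textup{sa}(H))\rightarrow\mathbb{Z},\qquad \mu(\mathcal{A})=c_1(\sind(\mathcal{A})),
\]
which is well defined because $\sind(\mathcal{A})\in K^{-1}(I,\partial I)$ is defined for any gap-continuous path with invertible endpoints and $c_1:K^{-1}(I,\partial I)\rightarrow\mathbb{Z}$ is an isomorphism. It then remains to check that $\mu$ has the properties (ii) and (iii) of the spectral flow and that $\mu(\mathcal{A}_{nor})=1$. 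Property (i) need not be verified separately: Theorem \ref{LeschUniqueness} shows it follows from (ii) and (iii), and it is in any case immediate from the listed properties of $\sind$, since $\sind(\mathcal{A})=0$ when every $\mathcal{A}_\lambda$ is invertible.

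Homotopy invariance (iii) is immediate. If $h:I\times I\rightarrow\mathcal{CF}^\textup{sa}(H)$ is gap-continuous with $h(s,0)$ and $h(s,1)$ invertible for all $s\in I$, then this is exactly the setting of the homotopy invariance property of $\sind$ recorded in Section \ref{section-sflbundle} with $(X,Y)=(I,\partial I)$, so $\sind(h(0,\cdot))=\sind(h(1,\cdot))$, and applying $c_1$ gives $\mu(h(0,\cdot))=\mu(h(1,\cdot))$.

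The additivity under concatenation (ii) is the step I expect to demand the most care, and I would obtain it by showing that $\sind$ itself is additive, i.e. $\sind(\mathcal{A}^1\ast\mathcal{A}^2)=\sind(\mathcal{A}^1)+\sind(\mathcal{A}^2)$ in $K^{-1}(I,\partial I)$ whenever $\mathcal{A}^1_1=\mathcal{A}^2_0$ is invertible; applying the homomorphism $c_1$ then yields (ii). For this I would regard the concatenated path $\mathcal{A}=\mathcal{A}^1\ast\mathcal{A}^2$, which is invertible at $0,\tfrac12,1$, and run the index bundle construction of Section \ref{section-sflbundle} relative to the three-point subset $\{0,\tfrac12,1\}$, obtaining a class in $K^{-1}(I,\{0,\tfrac12,1\})$ whose image under the natural map $K^{-1}(I,\{0,\tfrac12,1\})\rightarrow K^{-1}(I,\partial I)$ is $\sind(\mathcal{A})$. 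Since $\{0,\tfrac12,1\}$ cuts $I$ into the two arcs $[0,\tfrac12]$ and $[\tfrac12,1]$ meeting only at the interior point $\tfrac12$ of the subspace, one has the wedge decomposition $I/\{0,\tfrac12,1\}\cong\big([0,\tfrac12]/\{0,\tfrac12\}\big)\vee\big([\tfrac12,1]/\{\tfrac12,1\}\big)$ and hence $K^{-1}(I,\{0,\tfrac12,1\})\cong K^{-1}([0,\tfrac12],\{0,\tfrac12\})\oplus K^{-1}([\tfrac12,1],\{\tfrac12,1\})$, under which, by compatibility of the domain bundle and the transversal subspace $V$ with restriction to a subinterval, the above class corresponds to $(\sind(\mathcal{A}^1),\sind(\mathcal{A}^2))$ after the obvious reparametrisation isomorphisms. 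Finally, the map $K^{-1}(I,\{0,\tfrac12,1\})\rightarrow K^{-1}(I,\partial I)$ is induced by the pinch map $S^1\rightarrow S^1\vee S^1$, and pull-back along a pinch map is addition of the two wedge summands; this gives $\sind(\mathcal{A})=\sind(\mathcal{A}^1)+\sind(\mathcal{A}^2)$. The two points that genuinely need proof here are the restriction-compatibility of the index bundle with a subinterval and the elementary but fiddly identification of the relevant relative $K$-theory maps with the pinch comultiplication.

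It remains to compute $\mu(\mathcal{A}_{nor})$ for the normalisation path \eqref{normalisationpath}, which I would do directly. Writing $H=\im(P_-)\oplus\im(P_0)\oplus\im(P_+)$, the auxiliary family $\hat{\mathcal{A}}_{(\lambda,s)}=\mathcal{A}_\lambda+isI_H$ from Section \ref{section-sflbundle} is block-diagonal and equals $\diag\big((1+is)I,\,(\lambda-\tfrac12)+is,\,(1+is)I\big)$, hence invertible for all $(\lambda,s)\in I\times\mathbb{R}$ except at $(\tfrac12,0)$. Taking $V=\im(P_0)=\ker\mathcal{A}_{1/2}$ in \eqref{transversalselfadjointV}, the finite-dimensional bundle $E(\hat{\mathcal{A}},V)=\hat{\mathcal{A}}^{-1}(V)$ is the trivial line bundle $(I\times\mathbb{R})\times\im(P_0)$, on which $\hat{\mathcal{A}}$ acts by multiplication with $z=(\lambda-\tfrac12)+is$. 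Thus $\sind(\mathcal{A}_{nor})$ is represented by the triple $[\Theta(\im P_0),\Theta(\im P_0),z]$, which is the standard generator of $K^{-1}(I,\partial I)\cong\widetilde{K}^{-1}(S^1)$ (equivalently, the Bott class in $\widetilde{K}(S^2)$ obtained by clutching over $I\times\mathbb{R}$), and $c_1$ takes the value $1$ on it by the normalisation of the Chern number recalled in Appendix \ref{app-K}. Hence $\mu(\mathcal{A}_{nor})=1$, and Theorem \ref{LeschUniqueness} gives $\mu=\sfl$, that is, $c_1(\sind(\mathcal{A}))=\sfl(\mathcal{A})$ for every $\mathcal{A}\in\Omega(\mathcal{CF}^\textup{sa}(H),G\mathcal{C}^\textup{sa}(H))$. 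The only delicate point in this last step is orientation: the sign in the definition of $\hat{\mathcal{A}}$ (the choice of $+i$) and the normalisation of $c_1$ must be fixed compatibly so that the generator pairs to $+1$ rather than $-1$, and the conventions adopted earlier in the paper should be arranged precisely for this.
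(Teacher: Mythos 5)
Your overall strategy is the same as the paper's: reduce to Lesch's uniqueness Theorem~\ref{LeschUniqueness} by verifying homotopy invariance, additivity under concatenation, and the normalisation $\mu(\mathcal{A}_{nor})=1$. The homotopy invariance and the normalisation computation are handled as in the paper, the latter coming down to the same explicit identification of $\sind(\mathcal{A}_{nor})$ with the class $[\Theta(\mathbb{C}),\Theta(\mathbb{C}),(\lambda-\tfrac12)+is]$ and an application of~\eqref{Chern}.

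Where you genuinely diverge is in the proof of additivity. You propose to run the index bundle construction for the concatenated path $\mathcal{A}=\mathcal{A}^1\ast\mathcal{A}^2$ relative to the larger invertibility set $\{0,\tfrac12,1\}$, then pass to the wedge decomposition $K^{-1}(I,\{0,\tfrac12,1\})\cong K^{-1}([0,\tfrac12],\{0,\tfrac12\})\oplus K^{-1}([\tfrac12,1],\{\tfrac12,1\})$ via restriction (naturality of $\sind$ under pullback identifies the two summands with $\sind(\mathcal{A}^1)$ and $\sind(\mathcal{A}^2)$ after affine reparametrisation), and finally observe that the map $K^{-1}(I,\{0,\tfrac12,1\})\to K^{-1}(I,\partial I)$ is the pinch comultiplication, i.e.\ addition of the two summands — which can also be read off directly from~\eqref{Chern}, since the support of the representing triple splits into two disjoint compact pieces and the winding number around a loop enclosing both is the sum of the winding numbers around each. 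The paper instead proves additivity by an entirely different mechanism: it trivialises the domain bundle over $I\times\mathbb{R}$ to reduce to a bounded family $\tilde{\mathcal{A}}=\hat{\mathcal{A}}\circ\psi$, reparametrises the two halves by maps $g_1,g_2$ constant on the complementary half-interval, and runs an explicit rotation homotopy $h_\Theta$ in $\mathcal{L}(H\oplus H)$ to deform the block-diagonal family $g_1^*\tilde{\mathcal{A}}\oplus g_2^*\tilde{\mathcal{A}}$ to $(\overline{f_1\ast f_2}^*\tilde{\mathcal{A}})\oplus\tilde{\mathcal{A}}_{\overline{f}_1(1,\cdot)}$, from which additivity falls out of the direct-sum and homotopy properties of the index bundle for Fredholm morphisms. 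Your route is more conceptual and avoids the explicit homotopy, at the cost of having to set up and justify the excision/wedge decomposition of the relative $K$-group and the compatibility of $\sind$ with enlarging the invertibility set $Y$ (which, although immediate from the construction, the paper does not record explicitly); the paper's route is more computational but stays entirely within the properties of the index bundle already listed. Both are correct, and both ultimately rest on the same trivialisation of the domain bundle over $I\times\mathbb{R}$.
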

\noindent
Let us point out that it follows from \eqref{complexification} that if $E$ is a real Hilbert space and $\mathcal{A}=\{\mathcal{A}_\lambda\}_{\lambda\in I}$ a path in $\mathcal{CF}^\textup{sa}(E)$, then

\begin{align}\label{Creal}
\sfl(\mathcal{A})=c_1(\sind(\mathcal{A}^\mathbb{C}))).
\end{align}





\subsection{Proof of Theorem C}
We recall from Section \ref{section-sfl} that $\Omega(\mathcal{CF}^\textup{sa}(H),G\mathcal{C}^\textup{sa}(H))$ denotes the set of all paths in $\mathcal{CF}^\textup{sa}(H)$ having invertible endpoints. We set

\[\mu:\Omega(\mathcal{CF}^\textup{sa}(H),G\mathcal{C}^\textup{sa}(H))\rightarrow\mathbb{Z},\quad \mu(\mathcal{A})=c_1(\sind(\mathcal{A}))\in\mathbb{Z}\]
and now show that $\mu$ satisfies all assumptions of Theorem \ref{LeschUniqueness} in three steps.


\subsubsection*{Step 1: Homotopy Invariance}
It follows from the properties of the index bundle for families in $\mathcal{CF}^\textup{sa}(H)$ that 

\[\sind(\mathcal{A}^1)=\sind(\mathcal{A}^2)\in K^{-1}(I,\partial I)\]
if $\mathcal{A}^1$ and $\mathcal{A}^2$ are homotopic by a homotopy inside $\Omega(\mathcal{CF}^\textup{sa}(H),G\mathcal{C}^\textup{sa}(H))$. Hence the same is true for $\mu$ showing the homotopy invariance in Theorem \ref{LeschUniqueness}. 


\subsubsection*{Step 2: Additivity under Concatenation}
This step is based on the following rather technical lemma.

\begin{lemma} 
Let $\mathcal{A}\in\Omega(\mathcal{CF}^\textup{sa}(H),G\mathcal{C}^\textup{sa}(H))$ and $f_1,f_2:I\rightarrow I$ continuous functions such that $f_1(0)=0$, $f_2(1)=1$ and $f_1(1)=f_2(0)$. If $\mathcal{A}_{f_1(1)}\in G\mathcal{C}^\textup{sa}(H)$, then

\[\sind((f_1\ast f_2)^\ast\mathcal{A})=\sind(f^\ast_1\mathcal{A})+\sind(f^\ast_2\mathcal{A})\in K^{-1}(I,\partial I).\]
\end{lemma}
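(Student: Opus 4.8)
The plan is to reduce the concatenation formula to the additivity of the index bundle under gluing of $K$-theory classes over a decomposition of the interval, exploiting the fact that $\sind$ is pulled back along the reparametrising functions $f_1, f_2$ and $f_1\ast f_2$. First I would introduce the homeomorphism that realises $I$ as the concatenation $I\ast I$: let $g_1,g_2:I\to I$ be the standard affine embeddings onto $[0,\tfrac12]$ and $[\tfrac12,1]$ respectively, so that $f_1\ast f_2 = (f_1\circ g_1^{-1})$ on $[0,\tfrac12]$ and $=(f_2\circ g_2^{-1})$ on $[\tfrac12,1]$ in the usual sense. The key hypothesis $\mathcal{A}_{f_1(1)}\in G\mathcal{C}^\textup{sa}(H)$ guarantees that $\bigl((f_1\ast f_2)^\ast\mathcal{A}\bigr)_{1/2}=\mathcal{A}_{f_1(1)}$ is invertible, so the restriction of $(f_1\ast f_2)^\ast\mathcal{A}$ to $[0,\tfrac12]$ lies in $\Omega(\mathcal{CF}^\textup{sa}(H),G\mathcal{C}^\textup{sa}(H))$ (after rescaling), and likewise on $[\tfrac12,1]$; this is exactly what makes the relative $K$-theory decomposition available.

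Next I would invoke the excision/additivity property of relative $K$-theory: for the compact pair $(I,\partial I)$ with the further invertibility point at $\tfrac12$, there is a Mayer--Vietoris-type direct sum decomposition
\[
K^{-1}(I,\partial I)\;=\;K^{-1}([0,\tfrac12],\{0,\tfrac12\})\;\oplus\;K^{-1}([\tfrac12,1],\{\tfrac12,1\}),
\]
which follows from the long exact sequence of the triple $\partial I \cup \{\tfrac12\}\subset I$ (details recalled in Appendix~\ref{app-K}). Under this decomposition, the construction of $\sind$ through the domain bundle $E(\hat{\mathcal{A}},V)$ and the Fredholm morphism $\hat{\mathcal{A}}:E(\hat{\mathcal{A}},V)\to\Theta(V)$ is local in the base: choosing a single transversal $V$ via Lemma~\ref{kerneltransversal} simultaneously for the whole family $(f_1\ast f_2)^\ast\mathcal{A}$, the class $\sind((f_1\ast f_2)^\ast\mathcal{A})$ restricts on $[0,\tfrac12]$ to the class built from the same $V$ and the restricted morphism, which is $\sind$ of the rescaled restriction, and this rescaled restriction is $g_1^\ast$ of $f_1^\ast\mathcal{A}$; using the pullback property $\sind(g_1^\ast(\cdot))=\overline{g_1}^\ast\sind(\cdot)$ and the fact that $\overline{g_1}$ induces the excision isomorphism on the relevant $K$-groups, this summand is precisely the image of $\sind(f_1^\ast\mathcal{A})$. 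The same argument on $[\tfrac12,1]$ handles $\sind(f_2^\ast\mathcal{A})$, and adding the two summands gives the claim.

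The main obstacle I anticipate is the bookkeeping around the transversal space $V$: one must check that a single $V$ obtained from Lemma~\ref{kerneltransversal} applied to $(f_1\ast f_2)^\ast\mathcal{A}$ also serves as a valid transversal for each restricted (rescaled) family, so that the three index-bundle classes are genuinely built from compatible data and their $K$-theoretic comparison is legitimate; equivalently, one could instead prove the lemma by a homotopy argument, deforming $f_1$ and $f_2$ within the constraints $f_1(0)=0$, $f_2(1)=1$, $f_1(1)=f_2(0)$ and $\mathcal{A}_{f_1(1)}$ invertible to the standard affine maps $g_1,g_2$, and then reading off additivity directly from the block-diagonal structure of $\hat{\mathcal{A}}$ over $[0,\tfrac12]\sqcup[\tfrac12,1]$ together with the homotopy invariance and direct-sum properties of $\sind$. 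I would present the homotopy version, as it avoids the most delicate transversality checks and uses only the already-established properties of $\sind$ listed before Lemma~\ref{MANself}.
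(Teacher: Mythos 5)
Your proposal does not contain the key idea of the paper's proof, and both of the alternatives you sketch have genuine gaps.

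In approach (A), the claimed direct-sum decomposition
\[
K^{-1}(I,\partial I)\;=\;K^{-1}([0,\tfrac12],\{0,\tfrac12\})\;\oplus\;K^{-1}([\tfrac12,1],\{\tfrac12,1\})
\]
is false: each of the three groups is isomorphic to $\mathbb{Z}$, so the equation would read $\mathbb{Z}=\mathbb{Z}\oplus\mathbb{Z}$. The group that actually splits as $\mathbb{Z}\oplus\mathbb{Z}$ is $K^{-1}(I,\partial I\cup\{\tfrac12\})$, and the map to $K^{-1}(I,\partial I)$ is the pinch (addition) map, not an isomorphism; one would then still have to show that $\sind((f_1\ast f_2)^\ast\mathcal{A})$ lifts compatibly to that group, which is not addressed. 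In approach (B), which you say you would present, the deformation of $f_1$ and $f_2$ to the affine embeddings onto $[0,\tfrac12]$ and $[\tfrac12,1]$ moves the common endpoint from $f_1(1)$ to $\tfrac12$; for the homotopy invariance of $\sind$ to apply, the endpoints must stay invertible throughout, so you would need $\mathcal{A}_\mu\in G\mathcal{C}^{\textup{sa}}(H)$ for every $\mu$ between $f_1(1)$ and $\tfrac12$. This is not part of the hypotheses and can fail. Moreover, the appeal to a ``block-diagonal structure of $\hat{\mathcal{A}}$ over $[0,\tfrac12]\sqcup[\tfrac12,1]$'' and the direct-sum property of $\sind$ does not make sense here: a concatenation of paths is not a direct sum of families, so the additivity rule $\sind(\mathcal{A}_1\oplus\mathcal{A}_2)=\sind(\mathcal{A}_1)+\sind(\mathcal{A}_2)$ does not apply directly.

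The paper's actual proof works differently. It first uses that the domain bundle over the contractible space $I\times\mathbb{R}$ is trivial, reducing $\sind(f^\ast\mathcal{A})$ to $\ind(\overline{f}^\ast\tilde{\mathcal{A}})$ for a fixed bounded Fredholm family $\tilde{\mathcal{A}}=\hat{\mathcal{A}}\circ\psi$. It then introduces maps $g_1,g_2:I\times\mathbb{R}\rightarrow I\times\mathbb{R}$ which are reparametrisations of $\overline{f_1},\overline{f_2}$ compressing their variation into $[0,\tfrac12]$ and $[\tfrac12,1]$ respectively, so that $g_i$ is endpoint-preservingly homotopic to $\overline{f_i}$ (there is no need to move $f_1(1)$). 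The missing idea in your proposal is the rotation homotopy on $H\oplus H$ over the second half of $I$, which connects $g_1^\ast\tilde{\mathcal{A}}\oplus g_2^\ast\tilde{\mathcal{A}}$ to the family that equals $g_1^\ast\tilde{\mathcal{A}}\oplus g_2^\ast\tilde{\mathcal{A}}$ on $[0,\tfrac12]$ and has the two factors swapped on $[\tfrac12,1]$; the latter is precisely $(\overline{f_1\ast f_2}^\ast\tilde{\mathcal{A}})\oplus\tilde{\mathcal{A}}_{\overline{f}_1(1,\cdot)}$. The hypothesis $\mathcal{A}_{f_1(1)}\in G\mathcal{C}^{\textup{sa}}(H)$ is used to guarantee invertibility along this rotation homotopy at the relevant parameters and to discard the constant invertible summand. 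Without this doubling-and-rotation trick neither of your approaches closes.
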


\begin{proof}
We note at first that the domain bundle $\mathfrak{D}(\hat{\mathcal{A}})$ is trivial as it is a bundle over the contractible space $I\times\mathbb{R}$.  Consequently, there is a global trivialisation $\psi:(I\times\mathbb{R})\times H\rightarrow\mathfrak{D}(\hat{\mathcal{A}})$ and we obtain from the basic properties of the index bundle for Fredholm morphisms for any $f:I\rightarrow I$

\begin{align}\label{proofconcatenation}
\begin{split}
\sind(f^\ast\mathcal{A})&=\overline{f}^\ast\ind(\hat{\mathcal{A}})=\overline{f}^\ast(\ind(\hat{\mathcal{A}})+\ind(\psi))\\
&=\overline{f}^\ast\ind(\hat{\mathcal{A}}\circ\psi)=\ind(\overline{f}^\ast(\hat{\mathcal{A}}\circ\psi)).
\end{split}
\end{align}
We set $\tilde{\mathcal{A}}:=\hat{\mathcal{A}}\circ\psi$ which is a family of bounded Fredholm operators on $H$ parametrised by $I\times\mathbb{R}$.\\
Let $g_1,g_2:I\times\mathbb{R}\rightarrow I\times\mathbb{R}$ be defined by

\begin{align*}
g_1(\lambda,s)&=\begin{cases}
\overline{f_1}(2\lambda,s),&\quad 0\leq \lambda\leq\frac{1}{2}\\
\overline{f_1}(1,s),&\quad \frac{1}{2}\leq \lambda\leq 1
\end{cases}\\
g_2(\lambda,s)&=\begin{cases}
\overline{f_2}(0,s),&\quad 0\leq \lambda\leq\frac{1}{2}\\
\overline{f_2}(2\lambda-1,s),&\quad \frac{1}{2}\leq \lambda\leq 1
\end{cases}.
\end{align*}
We consider the homotopy $h:I\times (I\times\mathbb{R})\rightarrow\mathcal{L}(H\oplus H)$ defined by

\begin{align*}
h_\Theta(\lambda,s)= (g^\ast_1\tilde{\mathcal{A}})_{(\lambda,s)}\oplus (g^\ast_2\tilde{\mathcal{A}})_{(\lambda,s)},\quad 0\leq \lambda\leq \frac{1}{2},
\end{align*}
and

\begin{align*}
h_\Theta(\lambda,s)=
\begin{pmatrix}
\cos\left(\frac{\pi\Theta}{2}\right)&\sin\left(\frac{\pi\Theta}{2}\right)\\
-\sin\left(\frac{\Theta\lambda}{2}\right)&\cos\left(\frac{\Theta\lambda}{2}\right)
\end{pmatrix}
\begin{pmatrix}
(g^\ast_1\tilde{\mathcal{A}})_{(\lambda,s)}&0\\
0&(g^\ast_2\tilde{\mathcal{A}})_{(\lambda,s)}
\end{pmatrix}
\begin{pmatrix}
\cos\left(\frac{\pi\Theta}{2}\right)&-\sin\left(\frac{\pi\Theta}{2}\right)\\
\sin\left(\frac{\pi\Theta}{2}\right)&\cos\left(\frac{\pi\Theta}{2}\right)
\end{pmatrix}
\end{align*}
for $\frac{1}{2}\leq \lambda\leq 1$. 
Note that $h$ is continuous, as $g_1(\frac{1}{2},s)=g_2(\frac{1}{2},s)$ and in this case the above matrix product is $(g^\ast_1\tilde{\mathcal{A}})_{(\lambda,s)}\oplus(g^\ast_2\tilde{\mathcal{A}})_{(\lambda,s)}$. Hence the definitions of $h_\Theta$ coincide for all $(\Theta,\lambda,s)\in I\times\{\frac{1}{2}\}\times\mathbb{R}$.\\
The homotopy $h$ connects the maps $h_0=g^\ast_1\tilde{\mathcal{A}}\oplus g^\ast_2\tilde{\mathcal{A}}$ and 

\begin{align*}
h_1(\lambda,s)=\begin{cases}
(g^\ast_1\tilde{\mathcal{A}})_{(\lambda,s)}\oplus(g^\ast_2\tilde{\mathcal{A}})_{(\lambda,s)},\quad 0\leq \lambda\leq\frac{1}{2}\\
(g^\ast_2\tilde{\mathcal{A}})_{(\lambda,s)}\oplus (g^\ast_1\tilde{\mathcal{A}})_{(\lambda,s)},\quad\frac{1}{2}\leq \lambda\leq 1.
\end{cases}
\end{align*} 
Moreover, it is readily seen that 

\[h_1=(\overline{f_1\ast f_2}^\ast\tilde{\mathcal{A}})\oplus\tilde{\mathcal{A}}_{\overline{f}_1(1,\cdot)}.\]
Finally, note that $h_\Theta(\lambda,s)$ is invertible if either $\lambda=0,1$, or $s\neq 0$. This allows us to use the homotopy invariance property of $K$-theory

\begin{align*}
\ind(\overline{f_1\ast f_2}^\ast\tilde{\mathcal{A}})&=\ind(\overline{f_1\ast f_2}^\ast\tilde{\mathcal{A}})+\ind(\tilde{\mathcal{A}}_{\overline{f}_1(1,\cdot)})=\ind(h_1)\\
&=\ind(h_0)=\ind(g^\ast_1\tilde{\mathcal{A}}\oplus g^\ast_2\tilde{\mathcal{A}})=\ind(g^\ast_1\tilde{\mathcal{A}})+\ind(g^\ast_2\tilde{\mathcal{A}})\\
&=\ind(\overline{f}^\ast_1\tilde{\mathcal{A}})+\ind(\overline{f}^\ast_2\tilde{\mathcal{A}}),
\end{align*} 
where we have used in the final step that $g_1$ is homotopic to $\overline{f}_1$ and $g_2$ is homotopic to $\overline{f}_2$ by canonical homotopies. Now the assertion follows from \eqref{proofconcatenation}.
\end{proof}
\noindent 
The rest of the proof of the concatenation property is now easily obtained. Indeed, let\linebreak $\mathcal{A}_1,\mathcal{A}_2\in\Omega(\mathcal{CF}^\textup{sa}(H),G\mathcal{C}^\textup{sa}(H))$ be such that the concatenation $\mathcal{A}_1\ast\mathcal{A}_2$ is defined. We define two functions $f_1,f_2:I\rightarrow I$ by 

\[f_1(t)=\frac{1}{2}t,\quad f_2(t)=\frac{1}{2}(t+1),\]
and note that $f_1\ast f_2$ is the identity on $I$, as well as $f^\ast_i(\mathcal{A}_1\ast\mathcal{A}_2)=\mathcal{A}_i$ for $i=1,2$. The assertion follows by applying the previous lemma to $f_1, f_2$ and $\mathcal{A}:=\mathcal{A}_1\ast\mathcal{A}_2$.


\subsubsection*{Step 3: Normalisation} 
We consider the path $\mathcal{A}_{nor}=\{\mathcal{A}_\lambda\}_{\lambda\in I}$ in \eqref{normalisationpath}, and recall that $\mathcal{A}_\lambda$ is not invertible if and only if $\lambda=\lambda_0:=\frac{1}{2}$. The kernel and cokernel of $\mathcal{A}_{\lambda_0}$ is the one-dimensional space $V:=\im(P_0)$ and so 

\[\im(\hat{\mathcal{A}}_{(\lambda,s)})+V=H,\quad  (\lambda,s)\in I\times\mathbb{R}.\]
As $H=\im(P_+)\oplus\im(P_0)\oplus\im(P_-)$ is a decomposition of $H$ into invariant subspaces of $\mathcal{A}_\lambda$, we see that 

\[\hat{\mathcal{A}}^{-1}_{(\lambda,s)}(V)=(\mathcal{A}_\lambda+is I_H)^{-1}(V)=V, \quad (\lambda,s)\in I\times\mathbb{R}.\] 
Moreover, 

\[\hat{\mathcal{A}}_{(\lambda,s)}\mid_V=\left(\lambda-\lambda_0\right)P_0+is I_V.\]
Hence we obtain 

\[\sind(\mathcal{A})=[\Theta(V),\Theta(V),\hat{\mathcal{A}}\mid_V]=[\Theta(\mathbb{C}),\Theta(\mathbb{C}),\kappa]\in K^{-1}(I,\partial I),\]
where 

\[\kappa:I\times\mathbb{R}\rightarrow\mathbb{C},\quad \kappa(\lambda,s)=\lambda-\lambda_0+is.\]
By \eqref{Chern}, this yields

\[\mu(\mathcal{A})=c_1(\sind(\mathcal{A}))=\frac{1}{2\pi i}\int_{S^1}{\frac{1}{z-\lambda_0}\,dz}=1\in\mathbb{Z},\]
and so Theorem C is shown.


\section{Theorem A}

\subsection{Setting and Statement}
Let $E$ be a symplectic Hilbert space with symplectic form $\omega(x,y)=\langle Jx,y\rangle_E$, where $J:E\rightarrow E$ is a bounded linear operator such that $J^2=-I_E$ and $J^T=-J$. We let $S:I\times\mathbb{R}\rightarrow\mathcal{S}(E)$ be a family of selfadjoint operators on $E$ and consider the differential operators

\begin{align}\label{Alambda}
\mathcal{A}_\lambda:H^1(\mathbb{R},E)\subset L^2(\mathbb{R},E)\rightarrow L^2(\mathbb{R},E),\quad(\mathcal{A}_\lambda u)(t)=Ju'(t)+S_\lambda(t)u(t).
\end{align}
In what follows, we assume that 

\[S_\lambda(t)=B_\lambda+K_\lambda(t),\quad (\lambda,t)\in I\times\mathbb{R},\]
where

\begin{itemize}
\item[(A1)] $K_\lambda(t)$ is compact for all $(\lambda,t)\in I\times\mathbb{R}$, and the limits

\[K_\lambda(\pm\infty)=\lim_{t\rightarrow\pm\infty}K_\lambda(t)\]
exist uniformly in $\lambda$,
\item[(A2)] the operators $JB_\lambda$ and

\[JS_\lambda(\pm\infty):=J(B_\lambda+K_\lambda(\pm\infty))\]
are hyperbolic, i.e., there are no purely imaginary points in their spectra.
\end{itemize}
The next two lemmas show that the spectral flow and the Maslov index in Theorem A are well defined.

\begin{lemma}\label{lem-AFredholm}
The operators $\mathcal{A}_\lambda$ are selfadjoint Fredholm operators under the assumptions (A1) and (A2).
\end{lemma}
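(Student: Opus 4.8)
The plan is to deduce the statement from the Fredholm theory for ordinary differential operators in Hilbert spaces of Abbondandolo and Majer, together with Lemma~\ref{symmetric-selfadjoint}. First I would put \eqref{Alambda} into the standard form: since $J^2=-I_E$, applying $-J$ to $Ju'(t)+S_\lambda(t)u(t)$ gives $u'(t)-JS_\lambda(t)u(t)$, so that on the common domain $H^1(\mathbb{R},E)$ we have $\mathcal{A}_\lambda=J\circ F_\lambda$, where $F_\lambda u=u'-A_\lambda(t)u$ and $A_\lambda(t)=JS_\lambda(t)=JB_\lambda+JK_\lambda(t)$. As $J$ acts on $L^2(\mathbb{R},E)$ as a unitary, $\mathcal{A}_\lambda$ is closed, resp.\ Fredholm, resp.\ of index $k$, if and only if $F_\lambda$ is. Now $A_\lambda(t)$ is the sum of the constant operator $JB_\lambda$, which is hyperbolic by (A2), and the term $JK_\lambda(t)$, which by (A1) is compact, continuous in $t$, and norm-convergent to $JK_\lambda(\pm\infty)$ as $t\to\pm\infty$; moreover $A_\lambda(\pm\infty)=JS_\lambda(\pm\infty)$ is hyperbolic, again by (A2). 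These are exactly the hypotheses under which the results of \cite{AlbertoODE}, resting on the linear theory developed in \cite{AlbertoMorseHilbert}, show that $F_\lambda$, and hence $\mathcal{A}_\lambda$, is a closed Fredholm operator on $L^2(\mathbb{R},E)$.

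Second I would show that $\ind(\mathcal{A}_\lambda)=0$. The index formula of Abbondandolo and Majer expresses $\ind(F_\lambda)$ as a relative dimension built from the asymptotic stable and unstable subspaces at $\pm\infty$, equivalently as the relative index of the Riesz spectral projections $Q^\pm$ of $JS_\lambda(\pm\infty)$ onto the part of the spectrum with positive real part. Since each $K_\lambda(\pm\infty)$ is a norm-limit of compact operators, it is itself compact, so $JS_\lambda(+\infty)$ and $JS_\lambda(-\infty)$ differ by a compact operator; as both are hyperbolic, a single contour in the right half-plane avoiding both spectra encloses the positive-real-part part of each, and the resolvent identity then shows that $Q^+-Q^-$ is compact. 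Hence the relative index vanishes and $\ind(\mathcal{A}_\lambda)=\ind(F_\lambda)=0$. This vanishing is the point clarified for us by H.\ Schulz-Baldes, cf.\ the Acknowledgement.

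Third I would verify symmetry. For $u,v\in H^1(\mathbb{R},E)$, integrating $\langle Ju'(t),v(t)\rangle_E$ by parts and using $J^T=-J$ together with $S_\lambda(t)^T=S_\lambda(t)$ gives $\langle\mathcal{A}_\lambda u,v\rangle_{L^2(\mathbb{R},E)}=\langle u,\mathcal{A}_\lambda v\rangle_{L^2(\mathbb{R},E)}$, the boundary terms at $\pm\infty$ vanishing because $H^1(\mathbb{R},E)$ embeds continuously into the space of continuous $E$-valued maps vanishing at infinity. Thus $\mathcal{A}_\lambda$ is a symmetric element of $\mathcal{CF}_0(L^2(\mathbb{R},E))$, and Lemma~\ref{symmetric-selfadjoint} yields $\mathcal{A}_\lambda\in\mathcal{CF}^\textup{sa}(L^2(\mathbb{R},E))$.

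The step I expect to be the main obstacle is the second one: pinning down the precise version of the Abbondandolo--Majer theorem that applies and checking carefully that assumption (A1) forces the asymptotic relative dimension to vanish (in an earlier version of this work this vanishing appeared as an extra hypothesis). The remaining pieces --- the reduction to standard form, the symmetry computation, and the appeal to Lemma~\ref{symmetric-selfadjoint} --- are routine. I note in passing that selfadjointness of $\mathcal{A}_\lambda$ could also be obtained directly, since after Fourier transform $u\mapsto Ju'$ becomes multiplication by the selfadjoint symbol $i\xi J$ with domain $H^1(\mathbb{R},E)$ and $u\mapsto S_\lambda(\cdot)u$ is a bounded selfadjoint perturbation; but routing through Lemma~\ref{symmetric-selfadjoint} is more economical, as the index computation is needed for Theorem~A in any case.
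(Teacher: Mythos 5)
Your overall architecture (reduction to a first-order system, Fredholmness via Abbondandolo--Majer, an index computation, symmetry, and then Lemma~\ref{symmetric-selfadjoint}) matches the paper's proof, and your first and third steps are fine. But the index computation --- the step you yourself flagged as the likely obstacle --- has a genuine gap, and it is precisely the gap that the Acknowledgement says was closed by Schulz-Baldes.

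You argue: $K_\lambda(\pm\infty)$ compact $\Rightarrow$ $JS_\lambda(+\infty)-JS_\lambda(-\infty)$ compact $\Rightarrow$ (by the resolvent identity) $Q^+-Q^-$ compact $\Rightarrow$ the relative index vanishes. The last implication is false. Compactness of $Q^+-Q^-$ is exactly what makes $V^-(JS_\lambda(+\infty))$ and $V^-(JS_\lambda(-\infty))$ \emph{commensurable}, i.e.\ it is the condition ensuring the relative dimension
\[
\dim\!\bigl(V^-(JS_\lambda(+\infty)),V^-(JS_\lambda(-\infty))\bigr)
=\dim\bigl(W\cap V^\perp\bigr)-\dim\bigl(W^\perp\cap V\bigr)
\]
is a well-defined finite integer; it does not force that integer to be $0$. (A standard counterexample: on $\ell^2(\mathbb{Z})$, the projections onto $\overline{\spa}\{e_n:n\ge 0\}$ and onto $\overline{\spa}\{e_n:n\ge 1\}$ differ by a rank-one operator, yet the relative dimension is $1$.) So commensurability alone, which is all your contour argument delivers, cannot give $\ind(\mathcal{A}_\lambda)=0$.

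What the paper does instead is exploit the Hamiltonian structure: it shows that $V^-(JS_\lambda(\pm\infty))$ are \emph{Lagrangian} subspaces of $(E,\omega)$. One has $V^-(-A^T)=V^-(A)^\perp$ for any hyperbolic $A$, so $V^-(JS_\lambda(\pm\infty))^\perp=V^-(S_\lambda(\pm\infty)J)$, and a conjugation argument with $J$ (using $J^{-1}(\mu-JS_\lambda(\pm\infty))^{-1}J=(\mu-S_\lambda(\pm\infty)J)^{-1}$) gives $J\,V^-(JS_\lambda(\pm\infty))=V^-(S_\lambda(\pm\infty)J)$, hence $J\,V^-=V^{-\perp}$, i.e.\ each $V^-$ is Lagrangian by \eqref{Lperp=JL}. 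Once both are Lagrangian, $J$ maps $W\cap V^\perp=W\cap JV$ isomorphically onto $JW\cap V=W^\perp\cap V$, so the two summands in the relative dimension are equal and it vanishes. You need to insert this Lagrangian argument in place of the unjustified ``hence the relative index vanishes''; your observation about the Fourier-transform route to selfadjointness is fine but does not repair the index step, which Theorem~A needs anyway.
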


\begin{proof}
Let us recall that two closed subspaces $V,W\subset E$ are called commensurable if the difference of their orthogonal projections $P_V-P_W$ is compact. Their relative dimension is defined by 

\[\dim(V,W)=\dim(W\cap V^\perp)-\dim(W^\perp\cap V)\]
which is a finite number (see \cite[\S 2]{AlbertoBuch}). By (A2), the operators $J S_\lambda(\pm\infty)$ have no spectra on the imaginary axis. Hence there are splittings 

\begin{align}\label{splittingstableunstable}
E=V^-(JS_\lambda(+\infty))\oplus V^+(JS_\lambda(+\infty))=V^-(JS_\lambda(-\infty))\oplus V^+(JS_\lambda(-\infty)),
\end{align}
where $V^-(JS_\lambda(\pm\infty))$ and $V^+(JS_\lambda(\pm\infty))$ denote the invariant subspaces of $JS_\lambda(\pm\infty)$ with respect to the negative and positive complex half-plane, respectively. Moreover, by (A1), the operators $JS_\lambda(+\infty)-JS_\lambda(-\infty)$ are compact, which implies that the same is true for the differences of their spectral projections onto $V^-(JS_\lambda(+\infty))$ and $V^-(JS_\lambda(-\infty))$ (see, e.g., \cite[Lemma 3.2]{AlbertoODE}). Hence these spaces are commensurable by \cite[Lemma 3.3]{AlbertoODE} and so their relative dimension is defined.\\
It was proved in \cite[Thm. B]{AlbertoODE} that the operators $\mathcal{A}_\lambda$ are Fredholm under the assumptions (A1)-(A2) and their Fredholm index is given by

\begin{align}\label{Hermann}
\ind(\mathcal{A}_\lambda)=\dim(V^-(JS_\lambda(+\infty)),V^-(JS_\lambda(-\infty))).
\end{align}
We now claim that $\ind(\mathcal{A}_\lambda)=0$. Let us note at first that $V^-(-A^T)=V^-(A)^\perp$ for any hyperbolic operator $A$ on $E$ (see, e.g., \cite[\S 1]{AlbertoODE}). Hence, for $A=JS_\lambda(\pm\infty)$, $V^-(JS_\lambda(\pm\infty))^\perp=V^-(S_\lambda(\pm\infty)J)$. By \eqref{Lperp=JL}, we see that the number \eqref{Hermann} vanishes if 

\begin{align}\label{indA=0}
JV^-(JS_\lambda(\pm\infty))=V^-(S_\lambda(\pm\infty)J),
\end{align}   
i.e. if $V^-(JS_\lambda(\pm\infty))$ are Lagrangian subspaces of $E$. To show this equality, we only need to note that 

\[J^{-1}(\mu-JS_\lambda(\pm\infty))^{-1}J=(\mu-S_\lambda(\pm\infty)J)^{-1}\]
for any $\mu\notin\sigma(JS_\lambda(\pm\infty))$. Therefore, if $P_1$ and $P_2$ denote the spectral projections onto \linebreak $V^-(JS_\lambda(\pm\infty))$ and $V^-(S_\lambda(\pm\infty)J)$, respectively, we get that $J^{-1}P_1J=P_2$. Hence, as $J^{-1}=-J$, $P_2$ projects onto 

\[J^{-1}\im(P_1)=J\im(P_1)=JV^-(JS_\lambda(\pm\infty))\]
and \eqref{indA=0} and so \eqref{Hermann} is shown.\\
Finally, it is readily seen that $\mathcal{A}_\lambda$ is symmetric by integration by parts. Hence, it follows from Lemma \ref{symmetric-selfadjoint} that these operators are selfadjoint Fredholm operators. 
\end{proof}
\noindent
Note that each $\mathcal{A}_\lambda$ has the same domain $H^1(\mathbb{R},E)$ which makes it easy to show $\mathcal{A}=\{\mathcal{A}_\lambda\}_{\lambda\in I}$ is a continuous path in $\mathcal{BF}^\textup{sa}(H^1(\mathbb{R},E),L^2(\mathbb{R},E))$. Hence, by Theorem \ref{Leschincl}, we see that $\mathcal{A}$ is continuous in $\mathcal{CF}^\textup{sa}(L^2(\mathbb{R},E))$, and so the spectral flow $\sfl(\mathcal{A})$ is defined.

\begin{lemma}\label{lemma-FL2}
If (A1) and (A2) hold, then $(E^u_\lambda(t_0),E^s_\lambda(t_0))\in\mathcal{FL}^2(E,\omega)$ for any $\lambda\in I$ and $t_0\in\mathbb{R}$.
\end{lemma}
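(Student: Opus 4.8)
The plan is to combine Lemma \ref{lem-AFredholm} with the results of Abbondandolo and Majer for the Fredholm part and with a symplectic reduction for the Lagrangian part. First I would settle the Fredholm-pair property: by Lemma \ref{lem-AFredholm} the operator $\mathcal{A}_\lambda$ is a selfadjoint Fredholm operator, hence of index $0$, and by \cite{AlbertoODE} the subspaces $E^u_\lambda(t_0),E^s_\lambda(t_0)$ are closed and $(E^u_\lambda(t_0),E^s_\lambda(t_0))$ is a Fredholm pair with $\ind(E^u_\lambda(t_0),E^s_\lambda(t_0))=\ind(\mathcal{A}_\lambda)=0$. In particular, setting $N:=E^u_\lambda(t_0)\cap E^s_\lambda(t_0)$, we have $\dim N=\codim(E^u_\lambda(t_0)+E^s_\lambda(t_0))<\infty$; it then remains to prove that both subspaces are Lagrangian.

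The next step is to observe that $E^u_\lambda(t_0)$ and $E^s_\lambda(t_0)$ are isotropic. This rests on the fact that the symplectic form is a first integral of the linear Hamiltonian flow: if $u,v$ solve $Ju'+S_\lambda(t)u=0$, then, using $Ju'=-S_\lambda(t)u$, $J^T=-J$ and $S_\lambda(t)^T=S_\lambda(t)$,
\begin{align*}
\frac{d}{dt}\langle Ju(t),v(t)\rangle&=\langle Ju'(t),v(t)\rangle+\langle Ju(t),v'(t)\rangle\\
&=-\langle S_\lambda(t)u(t),v(t)\rangle+\langle u(t),S_\lambda(t)v(t)\rangle=0.
\end{align*}
Hence $\omega(u(t),v(t))$ is constant in $t$; letting $t\to+\infty$ when $u(t_0),v(t_0)\in E^s_\lambda(t_0)$, and $t\to-\infty$ when they lie in $E^u_\lambda(t_0)$, and using the decay of the solutions, gives $\omega(u(t_0),v(t_0))=0$, so both spaces are isotropic.

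The main obstacle is to upgrade ``isotropic'' to ``Lagrangian'', and here the index-$0$ information from the first step is genuinely needed — isotropy alone, or even isotropy together with commensurability with the (Lagrangian) autonomous stable and unstable spaces, does not suffice. I would argue by symplectic reduction modulo $N$. Since $N$ is isotropic and finite-dimensional, its $\omega$-annihilator $N^\circ$ is closed of codimension $\dim N$, contains $N$, and contains $E^u_\lambda(t_0)$ and $E^s_\lambda(t_0)$ (these being isotropic and containing $N$); moreover $(N^\circ)^\circ=N$, so the quotient $E':=N^\circ/N$, with the induced skew form $\omega'$, is a symplectic Hilbert space. The images $\bar E^u:=E^u_\lambda(t_0)/N$ and $\bar E^s:=E^s_\lambda(t_0)/N$ are closed, isotropic and intersect trivially, while $\codim_{E'}(\bar E^u+\bar E^s)=\codim(E^u_\lambda(t_0)+E^s_\lambda(t_0))-\dim N=0$ by index $0$; since $E^u_\lambda(t_0)+E^s_\lambda(t_0)$ is closed, this forces $E'=\bar E^u\oplus\bar E^s$. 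I would then invoke the elementary fact that when a symplectic vector space is the direct sum of two isotropic subspaces each summand is Lagrangian: if $y$ lies in the $\omega'$-annihilator of $\bar E^s$, write $y=y_1+y_2$ along $\bar E^s\oplus\bar E^u$; then $y_2=y-y_1$ still annihilates $\bar E^s$ and, lying in the isotropic $\bar E^u$, also annihilates $\bar E^u$, hence all of $E'$, so $y_2=0$ by non-degeneracy. Finally, using $(E^s_\lambda(t_0))^\circ\subseteq N^\circ$ (as $N\subseteq E^s_\lambda(t_0)$), the fact that $\bar E^s$ equals its own $\omega'$-annihilator in $E'$ lifts to $(E^s_\lambda(t_0))^\circ=E^s_\lambda(t_0)$ in $E$, and likewise for $E^u_\lambda(t_0)$; together with the previous paragraph this shows $(E^u_\lambda(t_0),E^s_\lambda(t_0))\in\mathcal{FL}^2(E,\omega)$. (An alternative route for this last step would be to identify $JE^s_\lambda(t_0)$ with the stable subspace at $t_0$ of the adjoint equation $v'=S_\lambda(t)Jv$ via $u\mapsto Ju$, and to note that the latter equals $E^s_\lambda(t_0)^\perp$ because the adjoint equation carries the dual exponential dichotomy on $[t_0,\infty)$; this is more reliant on dichotomy theory.)
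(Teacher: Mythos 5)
Your proof is correct, but the Lagrangian step takes a genuinely different route from the paper. For the Fredholm-pair part you and the paper both go through Lemma \ref{lem-AFredholm} and Abbondandolo--Majer; the one extra fact you invoke, namely $\ind(E^u_\lambda(t_0),E^s_\lambda(t_0))=\ind(\mathcal{A}_\lambda)$ rather than just the Fredholm ``iff'' from \cite[Thm.\ D]{AlbertoODE}, is indeed available in that reference, though the paper itself only cites the equivalence. For the Lagrangian part the paper invokes \cite[Thm.\ 2.1]{AlbertoODE}, which gives $E^{s/u}(-A^T,t_0)=E^{s/u}(A,t_0)^\perp$ for asymptotically hyperbolic systems, specialises to $A=JS_\lambda$, and uses the change of variables $v=Ju$ to obtain $E^{s/u}_\lambda(t_0)^\perp=JE^{s/u}_\lambda(t_0)$, i.e.\ \eqref{Lperp=JL} directly. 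You instead prove isotropy by the elementary first-integral computation $\frac{d}{dt}\omega(u(t),v(t))=0$, and then upgrade isotropy to maximal isotropy by symplectic reduction modulo $N=E^u_\lambda(t_0)\cap E^s_\lambda(t_0)$: the index-$0$ information forces $\bar E^u\oplus\bar E^s=N^\circ/N$, a direct sum of two isotropic subspaces is a Lagrangian decomposition, and the $\omega$-annihilators lift correctly through the quotient because $N\subset E^{s/u}_\lambda(t_0)\subset N^\circ$. Both arguments are valid. The paper's route is shorter and gives $L^\perp=JL$ in one line, at the cost of appealing to the adjoint-dichotomy theorem; yours avoids that theorem in favour of an elementary computation but needs the Fredholm-pair index to vanish, which you correctly take as input from Lemma \ref{lem-AFredholm} (so there is no circularity, since that lemma establishes $\ind\mathcal{A}_\lambda=0$ via the Lagrangian-ness of the \emph{asymptotic} spaces $V^{\pm}(JS_\lambda(\pm\infty))$, not of $E^{s/u}_\lambda(t_0)$). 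You already flag the paper's method as an alternative at the end, so the comparison is well understood.
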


\begin{proof}
Abbondandolo and Majer showed in \cite[Thm. D]{AlbertoODE} that $(E^u_\lambda(t_0),E^s_\lambda(t_0))$ is a Fredholm pair if and only if $\mathcal{A}_\lambda$ is Fredholm. Hence, by Lemma \ref{lem-AFredholm}, it remains to show that $E^u_\lambda(t_0), E^s_\lambda(t_0)\in \Lambda(E,\omega)$. We consider the differential equations $u'(t)-A(t)u(t)=0$, where $A$ is a continuous path of operators such that the limits $\lim_{t\rightarrow\pm\infty} A(t)$ exist and are hyperbolic. It was shown in \cite[Thm. 2.1]{AlbertoODE} that the stable and unstable spaces $E^s(A,t_0)$ and $E^u(A,t_0)$ of such an equation satisfy

\[E^s(-A^T,t_0)=E^s(A,t_0)^\perp,\qquad E^u(-A^T,t_0)=E^u(A,t_0)^\perp.\]
We set $A(t):=JS_\lambda(t)$ and obtain

\[E^{s/u}_\lambda(t_0)^\perp=E^{s/u}(JS_\lambda,t_0)^\perp=E^{s/u}(S_\lambda J,t_0).\]
Clearly, $u$ is a solution of $u'(t)-S_\lambda(t)Ju(t)=0$ if and only if $v(t):=Ju(t)$ satisfies $Jv'(t)+S_\lambda(t)v(t)=0$. Hence

\[E^{s/u}_\lambda(t_0)^\perp=E^{s/u}(S_\lambda J,t_0)=JE^{s/u}_\lambda(t_0),\]
which shows that these spaces are Lagrangian by \eqref{Lperp=JL}.
\end{proof}
\noindent
Finally, it follows from \cite[Thm. 3.1]{AlbertoODE} that $(E^u_\lambda(t_0),E^s_\lambda(t_0))\in\mathcal{FL}^2(E,\omega)$ depends continuously on $S_\lambda:\mathbb{R}\rightarrow\mathcal{S}(E)$ with respect to the $L^\infty$-topology on $C(\mathbb{R},\mathcal{S}(E))$. Hence $\{(E^u_\lambda(t_0),E^s_\lambda(t_0))\}_{\lambda\in I}$ is a continuous family in $\mathcal{FL}^2(E,\omega)$, and so the Maslov index is defined. The main theorem of this paper, which we prove in the following section, now reads as follows:

\begin{theoremA}\label{mainHomoclinics}
Let $S:I\times\mathbb{R}\rightarrow\mathcal{S}(E)$ be a continuous family of bounded selfadjoint operators satisfying the assumptions (A1) and (A2). Then 

\[\sfl(\mathcal{A})=\mu_{Mas}(E^u_\cdot(0),E^s_\cdot(0)).\]
\end{theoremA}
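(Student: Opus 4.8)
The plan is to transport the homoclinic problem on the line to a finite-interval problem of the type governed by Theorem \ref{main}, and then to use Theorem C to get around the fact that the relevant interval operators have $\lambda$-dependent domains, which rules out crossing forms.

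\textbf{Step 1 (reduction to a compact interval).} Fix $t_0=0$. First I would replace $\mathcal{A}$ by $\mathcal{A}+\delta I$ for a small $\delta>0$; by Lemma \ref{sflperturbation} this does not change the spectral flow, it preserves (A1)--(A2), and for generic small $\delta$ it makes $\mathcal{A}_0,\mathcal{A}_1$ (hence the endpoint pairs $(E^u_0(0),E^s_0(0))$ and $(E^u_1(0),E^s_1(0))$) invertible, i.e.\ the associated Lagrangian path becomes admissible. Next, using (A1)--(A2) and the Fredholm theory of Abbondandolo and Majer \cite{AlbertoODE}, homotope $S$ through families satisfying (A1)--(A2) to one which is autonomous outside a compact interval $[-T,T]$; since $\sfl(\mathcal{A})$ is a homotopy invariant (the path being gap-continuous by Theorem \ref{Leschincl}) and $\mu_{Mas}(E^u_\cdot(0),E^s_\cdot(0))$ is one as well (the pair depending continuously on $S$ by \cite[Thm.~3.1]{AlbertoODE}), and the endpoints stay invertible along this homotopy for $\delta>0$ small enough, neither side changes. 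For an equation which is autonomous and hyperbolic on the tails, a bounded solution decays at $\pm\infty$ exactly when its value at $\pm T$ lies in the stable/unstable subspace of $JS_\lambda(\pm\infty)$, which is $E^{s}_\lambda(T)$, resp.\ $E^u_\lambda(-T)$; restricting to $[-T,T]$ and noting that on the two hyperbolic tails the operator becomes invertible once the complementary boundary condition is imposed (so those contribute nothing to the spectral flow), one obtains $\sfl(\mathcal{A})=\sfl(\mathcal{B})$, where $\mathcal{B}_\lambda u=Ju'+S_\lambda u$ on $L^2([-T,T],E)$ with domain $\{u\in H^1([-T,T],E):u(-T)\in E^u_\lambda(-T),\ u(T)\in E^s_\lambda(T)\}$.

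\textbf{Step 2 (removing the potential by conjugation).} Let $\gamma_\lambda(t)$ be the fundamental solution of $J\gamma'+S_\lambda\gamma=0$ with $\gamma_\lambda(0)=I_E$; it is symplectic, so $\gamma_\lambda^T J\gamma_\lambda=J$ and hence $(\gamma_\lambda^T)^{-1}=J\gamma_\lambda J^{-1}$. Multiplication by $\gamma_\lambda$ defines a bounded invertible $M_\lambda$ on $L^2([-T,T],E)$, continuous in $\lambda$, and using $J\gamma_\lambda'+S_\lambda\gamma_\lambda=0$ one computes $\mathcal{B}_\lambda(M_\lambda v)=J\gamma_\lambda v'=(M_\lambda^{-1})^\ast(Jv')$, i.e.\ $\mathcal{B}_\lambda=(M_\lambda^{-1})^\ast\,\mathcal{Q}_\lambda\,M_\lambda^{-1}$. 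Since $u(0)=\gamma_\lambda(t)^{-1}u(t)$ along solutions, the transformed boundary conditions are $v(-T)\in\gamma_\lambda(-T)^{-1}E^u_\lambda(-T)=E^u_\lambda(0)$ and $v(T)\in\gamma_\lambda(T)^{-1}E^s_\lambda(T)=E^s_\lambda(0)$, so $\mathcal{Q}=\{\mathcal{Q}_\lambda\}$ is precisely the path of operators $\mathcal{Q}_\lambda v=Jv'$ from Theorem \ref{main} associated, on the interval $[-T,T]$, to the (now admissible) pair $(E^u_\cdot(0),E^s_\cdot(0))$. By Corollary \ref{cor-orthequ} and Lemma \ref{MANself}, applied with the continuous family $M_\lambda^{-1}\in GL$, this yields $\sind(\mathcal{B})=\sind(\mathcal{Q})\in K^{-1}(I,\partial I)$.

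\textbf{Step 3 (Theorems C and B).} Here Theorem C is brought to bear: $\mathcal{B}$ and $\mathcal{Q}$ are conjugate only through the non-unitary $M_\lambda^{-1}$ and have $\lambda$-dependent domains, so we compute $\sfl(\mathcal{B})=c_1(\sind(\mathcal{B}))=c_1(\sind(\mathcal{Q}))=\sfl(\mathcal{Q})$, and then Theorem \ref{main} gives $\sfl(\mathcal{Q})=\mu_{Mas}(E^u_\cdot(0),E^s_\cdot(0))$. Chaining Steps 1--3 and finally letting $\delta\downarrow 0$ — with Lemma \ref{sflperturbation} controlling the spectral-flow side and Lemma \ref{MaslovMidpoint} used to compare the Maslov indices of the $\delta$-perturbed and the original Lagrangian paths — gives $\sfl(\mathcal{A})=\mu_{Mas}(E^u_\cdot(0),E^s_\cdot(0))$.

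\textbf{Main obstacle.} The delicate step is Step 1: the line operator and the interval operator act on different Hilbert spaces, so equating their spectral flows genuinely requires the Abbondandolo--Majer Fredholm theory for asymptotically hyperbolic systems — that the kernel and cokernel of the line operator are captured on $[-T,T]$ by the unstable/stable boundary data, uniformly in $\lambda$ and stably under the perturbations $+\delta I$ one uses to follow eigenvalue crossings. By comparison, the conjugation of Step 2 and the appeals to Theorems B and C in Steps 2--3 are essentially formal, modulo the endpoint-convention bookkeeping handled by Lemmas \ref{sflperturbation} and \ref{MaslovMidpoint}.
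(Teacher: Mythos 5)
There is a genuine gap, and it sits exactly where you flag the "main obstacle." In your Step 1 you assert that $\sfl(\mathcal{A}) = \sfl(\mathcal{B})$, where $\mathcal{A}$ acts on $L^2(\mathbb{R},E)$ with domain $H^1(\mathbb{R},E)$ and $\mathcal{B}$ acts on $L^2([-T,T],E)$ with boundary data prescribed by $E^u_\lambda(-T)$, $E^s_\lambda(T)$. The justification offered — that a bounded solution decays iff its values at $\pm T$ lie in the (un)stable spaces, and that the tails "contribute nothing" once the complementary boundary conditions are imposed — is a statement about kernels, not about spectral flow. The line operator does not decompose as a direct sum of the interval operator and two tail operators (the domain $H^1(\mathbb{R},E)$ couples them through continuity at $\pm T$), so there is no direct-sum additivity to invoke, and even if the kernels matched for every $\lambda$, that would not control the full eigenvalue dynamics near $0$ that the spectral flow records. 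This is precisely the step that motivates Theorem C in the first place: the paper introduces the extension map $\iota_\lambda:\mathcal{D}(\mathcal{A}^0_\lambda)\to\mathcal{D}(\mathcal{A}_\lambda)$, verifies $\iota_\lambda(\ker(\mathcal{A}^0_\lambda))=\ker(\mathcal{A}_\lambda)$ and commutativity of the diagram with the restriction $p$, then carefully builds the transversal spaces $V$ and $W_0\oplus W_1$ for the two index bundles and shows, using the block-triangular structure \eqref{BundleMatrix} and Lemma \ref{homotopyII}, that $\sind(\mathcal{A})=\sind(\mathcal{A}^0)\in K^{-1}(I,\partial I)$. Only then is Theorem C applied, via the Chern number, to transfer this $K$-theory identity into $\sfl(\mathcal{A})=\sfl(\mathcal{A}^0)$. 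Nothing in your Step 1 substitutes for that argument.

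Compounding this, you have directed Theorem C at the wrong step. You invoke it in your Step 3 to compare $\sfl(\mathcal{B})$ and $\sfl(\mathcal{Q})$ after the conjugation by the fundamental solution. But those two paths live on the same Hilbert space $L^2([-T,T],E)$ and are related by $\mathcal{B}_\lambda = M_\lambda^T\mathcal{Q}_\lambda M_\lambda$ with $M:I\to GL$ continuous; the paper dispatches this with Lemma \ref{gap-product} plus Kuiper's theorem and the ordinary homotopy invariance of spectral flow, no $K$-theory needed. So your plan spends the machinery of Theorem C on a step that is comparatively routine, while leaving the hard comparison — line versus interval — to a heuristic. To make the argument go through you would need either to carry out the index-bundle comparison via $\iota$ (essentially reproducing the paper's Step 3), or to prove a genuine surgery/splitting formula for the spectral flow under cutting $\mathbb{R}$ into three pieces; neither is present in the proposal. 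The remaining scaffolding (perturbation by $\delta I$ via Lemma \ref{sflperturbation}, endpoint bookkeeping via Lemma \ref{MaslovMidpoint}, and the use of Theorem B to identify $\sfl(\mathcal{Q})$ with the Maslov index) is in line with the paper's Steps 1, 2 and 4.
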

\noindent
Let us now consider the equations \eqref{Hamiltonian} under the additional periodicity assumption

\begin{itemize}
\item[(A3)] $S_0(t)=S_1(t)$ for all $t\in\mathbb{R}$,
\end{itemize} 
which implies that the path $\mathcal{A}$ of operators in \eqref{Alambda} is periodic, i.e. $\mathcal{A}_0=\mathcal{A}_1$. The autonomous systems

\begin{equation}\label{HamiltonianJacII}
\left\{
\begin{aligned}
Ju'(t)+S_\lambda(\pm\infty)u(t)&=0,\quad t\in\mathbb{R}\\
\lim_{t\rightarrow\pm\infty}u(t)&=0,
\end{aligned}
\right.
\end{equation}
have the stable and unstable spaces

\begin{align*}
E^s_\lambda(\pm\infty)&=\{x\in E: \exp(tJS_\lambda(\pm\infty))x\rightarrow 0\,\text{as}\, t\rightarrow\infty\}=V^-(JS_\lambda(\pm\infty)) \\
E^u_\lambda(\pm\infty)&=\{x\in E: \exp(tJS_\lambda(\pm\infty))x\rightarrow 0\,\text{as}\, t\rightarrow-\infty\}=V^+(JS_\lambda(\pm\infty)),
\end{align*}
where $V^+(JS_\lambda(\pm\infty))$ and $V^-(JS_\lambda(\pm\infty))$ are as in \eqref{splittingstableunstable}. The following corollary generalises the main theorem of \cite{Jacobo} from $\mathbb{R}^{2n}$ to symplectic Hilbert spaces.

\begin{corA}
If (A1)-(A3) hold, then

\[\sfl(\mathcal{A})=\mu_{Mas}(E^u_\cdot(+\infty),E^s_\cdot(-\infty)).\]
\end{corA}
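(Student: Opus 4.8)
The plan is to deduce the corollary from Theorem A by two successive homotopy arguments that exploit the periodicity (A3). By Theorem A one has $\sfl(\mathcal{A})=\mu_{Mas}(E^u_\cdot(0),E^s_\cdot(0))$, so it suffices to show
\[\mu_{Mas}(E^u_\cdot(0),E^s_\cdot(0))=\mu_{Mas}(E^u_\cdot(+\infty),E^s_\cdot(-\infty)).\]
Since $S_0=S_1$, the operator $\mathcal{A}$ is periodic and both $\lambda\mapsto(E^u_\lambda(0),E^s_\lambda(0))$ and $\lambda\mapsto(E^u_\lambda(+\infty),E^s_\lambda(-\infty))=\lambda\mapsto(V^+(JS_\lambda(+\infty)),V^-(JS_\lambda(-\infty)))$ are \emph{loops} in $\mathcal{FL}^2(E,\omega)$ (the second pair is a Fredholm pair for every $\lambda$ because $V^-(JS_\lambda(+\infty))$ and $V^-(JS_\lambda(-\infty))$ are commensurable, as established in the proof of Lemma \ref{lem-AFredholm}). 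As $\mu_{Mas}\colon\pi_1(\mathcal{FL}^2(E,\omega))\to\mathbb{Z}$ is an isomorphism by Theorem \ref{Furutani-iso} and the group is abelian, two loops have the same Maslov index as soon as they are freely homotopic in $\mathcal{FL}^2(E,\omega)$; this is what I would establish in two steps.

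\emph{Step 1 (stretching to a glued autonomous problem).} For $R\ge 1$ set $S^{(R)}_\lambda(t)=S_\lambda(Rt)$. This family is again of the form \eqref{form}, has the same operators $B_\lambda$ and the same asymptotic limits, hence satisfies (A1)--(A3); by Lemma \ref{lemma-FL2} the pairs $(E^{u,(R)}_\lambda(0),E^{s,(R)}_\lambda(0))$ form a loop in $\mathcal{FL}^2(E,\omega)$ for each $R$, and, by the continuous dependence of the stable and unstable spaces on the coefficients from \cite{AlbertoODE} (in a form uniform in $\lambda$), $R\mapsto(E^{u,(R)}_\cdot(0),E^{s,(R)}_\cdot(0))$ is a free homotopy of loops, so its Maslov index does not depend on $R$. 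Now a short exponential-dichotomy/roughness argument, splitting $(-\infty,0]$ into $(-\infty,-\delta_R]$ -- where $S^{(R)}_\lambda\to S_\lambda(-\infty)$ in $L^\infty$ uniformly in $\lambda$ -- and a window $[-\delta_R,0]$ of width $\delta_R\to 0$ across which the flow converges to the identity, shows that, in the gap metric and uniformly in $\lambda$,
\[(E^{u,(R)}_\lambda(0),E^{s,(R)}_\lambda(0))\longrightarrow\bigl(V^+(JS_\lambda(-\infty)),V^-(JS_\lambda(+\infty))\bigr)=\bigl(E^u_\lambda(-\infty),E^s_\lambda(+\infty)\bigr),\]
the last equality being the definition of the autonomous stable/unstable spaces. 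Since $\mathcal{FL}^2(E,\omega)$ is locally contractible, for $R$ large the loop $(E^{u,(R)}_\cdot(0),E^{s,(R)}_\cdot(0))$ is freely homotopic to the limit loop, whence $\mu_{Mas}(E^u_\cdot(0),E^s_\cdot(0))=\mu_{Mas}(E^u_\cdot(-\infty),E^s_\cdot(+\infty))$.

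\emph{Step 2 (interchanging $+\infty$ and $-\infty$).} It remains to connect the loops $\lambda\mapsto(V^+(JS_\lambda(-\infty)),V^-(JS_\lambda(+\infty)))$ and $\lambda\mapsto(V^+(JS_\lambda(+\infty)),V^-(JS_\lambda(-\infty)))$ by a free homotopy in $\mathcal{FL}^2(E,\omega)$. The crucial input is the vanishing of the relative dimension \eqref{Hermann}, established in the proof of Lemma \ref{lem-AFredholm}: $\dim(V^-(JS_\lambda(+\infty)),V^-(JS_\lambda(-\infty)))=0$ for all $\lambda$, which says precisely that $JS_\lambda(-\infty)$ and $JS_\lambda(+\infty)$ lie in the same path component of the set of hyperbolic operators that are compact perturbations of $JB_\lambda$. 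Using Abbondandolo and Majer's description of the topology of this set together with the contractibility of $I$ and the equalities $JS_0(\pm\infty)=JS_1(\pm\infty)$ coming from (A3), I would choose a continuous family $(s,\lambda)\mapsto A^{(s)}_\lambda$ of such hyperbolic operators with $A^{(0)}_\lambda=JS_\lambda(-\infty)$, $A^{(1)}_\lambda=JS_\lambda(+\infty)$ and $A^{(s)}_0=A^{(s)}_1$. Each $A^{(s)}_\lambda$ equals $J$ composed with a bounded selfadjoint operator, so $V^\pm(A^{(s)}_\lambda)$ are Lagrangian (as in the proof of Lemma \ref{lem-AFredholm}), and they are pairwise commensurable in $s$; hence $(s,\lambda)\mapsto(V^+(A^{(s)}_\lambda),V^-(A^{(1-s)}_\lambda))$ is a homotopy of loops in $\mathcal{FL}^2(E,\omega)$ running from the first loop at $s=0$ to the second at $s=1$. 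Their Maslov indices therefore agree, and combining this with Step 1 and Theorem A gives $\sfl(\mathcal{A})=\mu_{Mas}(E^u_\cdot(+\infty),E^s_\cdot(-\infty))$.

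\emph{Main obstacle.} The delicate part is Step 2: converting the pointwise vanishing of the relative dimension into a genuinely $\lambda$-continuous, periodicity-respecting homotopy through hyperbolic operators requires some care with the topology of the space of hyperbolic operators and with the fibration of this space over the parameter interval. Step 1 is technically lighter but still needs a uniform-in-$\lambda$ refinement of the continuity/convergence statements for the stable and unstable bundles under the stretching, slightly beyond what is explicitly recorded in \cite{AlbertoODE}.
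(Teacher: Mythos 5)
Your proposal correctly begins with Theorem A and reduces the corollary to the identity $\mu_{Mas}(E^u_\cdot(0),E^s_\cdot(0))=\mu_{Mas}(E^u_\cdot(+\infty),E^s_\cdot(-\infty))$, but the route you take to this identity diverges from the paper and runs into a real obstruction.

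The paper's argument (following Hu--Portaluri) works with the three paths
$\gamma_1(\lambda)=(E^u_0(\lambda t_0),E^s_0(-\lambda t_0))$, $\gamma_2(\lambda)=(E^u_\lambda(t_0),E^s_\lambda(-t_0))$, $\gamma_3(\lambda)=(E^u_1((1-\lambda)t_0),E^s_1(-(1-\lambda)t_0))$,
shows $\gamma_1\ast\gamma_2\ast\gamma_3$ is homotopic to $(E^u_\cdot(0),E^s_\cdot(0))$, and notes that (A3) makes $\gamma_3$ the reverse of $\gamma_1$, so their contributions cancel by \eqref{reverseI}. This leaves $\mu_{Mas}(E^u_\cdot(t_0),E^s_\cdot(-t_0))$, and one then sends $t_0\to+\infty$: by \cite[Thm. 2.1]{AlbertoODE}, $E^u_\lambda(t_0)\to E^u_\lambda(+\infty)$ and $E^s_\lambda(-t_0)\to E^s_\lambda(-\infty)$, so the unstable reference time escapes to $+\infty$ and the stable one to $-\infty$ \emph{simultaneously}, and the limit is exactly the desired pair. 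The crucial trick is that the two reference times move in opposite directions; this is what the scaling $S^{(R)}_\lambda(t)=S_\lambda(Rt)$ in your Step 1 does not do. Your stretching pins both reference times at $0$, so the loop converges (modulo the technical convergence issues you acknowledge) to $(V^+(JS_\lambda(-\infty)),V^-(JS_\lambda(+\infty)))=(E^u_\cdot(-\infty),E^s_\cdot(+\infty))$, which has the roles of $+\infty$ and $-\infty$ interchanged relative to the statement. That forces you into Step 2.

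Step 2 is where the genuine gap lies. You assert that the pointwise vanishing of the relative dimension \eqref{Hermann} ``says precisely'' that $JS_\lambda(-\infty)$ and $JS_\lambda(+\infty)$ lie in the same path component of the hyperbolic, $J$-selfadjoint, compact perturbations of $JB_\lambda$, and then you need to choose, \emph{continuously in $\lambda$ and compatibly with the periodicity} $A^{(s)}_0=A^{(s)}_1$, a path $s\mapsto A^{(s)}_\lambda$ in this set connecting the two. This is a continuous-selection/section problem that is not addressed by the pointwise path-connectedness claim: you would need to know that the space of such connecting paths fibres over $\lambda\in I$ with contractible (or at least non-empty and connected enough) fibres in a way that allows a periodic global section, and none of this is provided or cited. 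Moreover, you also need the intermediate pairs $(V^+(A^{(s)}_\lambda),V^-(A^{(1-s)}_\lambda))$ to be Fredholm for all $(s,\lambda)$ and to depend continuously on $(s,\lambda)$ in the gap metric; neither is automatic from the constraints you impose. The paper sidesteps all of this by never creating the $\pm\infty$ mismatch in the first place, relying only on \cite[Thm. 2.1]{AlbertoODE}, \cite[Thm. 3.1]{AlbertoODE}, \cite[Thm. 3.6]{AlbertoODE} and \cite[Prop. 2.2.1]{AlbertoBuch} for the Fredholm properties of the pairs $(E^u_\lambda(t_0),E^s_\lambda(-t_0))$.

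Finally, even Step 1 as written leans on a convergence statement (``uniform-in-$\lambda$'' convergence of the stable/unstable pair under the time reparametrisation $t\mapsto Rt$) that is not a consequence of \cite[Thm. 3.1]{AlbertoODE} because $S^{(R)}_\lambda$ does not converge in $L^\infty(\mathbb{R},\mathcal{S}(E))$ to any admissible family as $R\to\infty$; some separate dichotomy-roughness argument would be needed. The paper's use of the reference-time limit from \cite[Thm. 2.1]{AlbertoODE} avoids this difficulty as well, since there the family $S$ is fixed and only the evaluation time moves.
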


\begin{proof}
We take a similar approach as in \cite[Prop. 3.3]{Hu} and consider for $t_0>0$ the concatenation $\gamma_1\ast\gamma_2\ast\gamma_3$ of the paths

\begin{align*}
\gamma_1&=\{(E^u_0(\lambda\cdot t_0)),E^s_0(-\lambda\cdot t_0)\}_{\lambda\in I},\quad \gamma_2=\{(E^u_\lambda(t_0),E^s_\lambda(-t_0))\}_{\lambda\in I},\\
 \gamma_3&=\{(E^u_1((1-\lambda)t_0), E^s_1(-(1-\lambda)t_0))\}_{\lambda\in I}. 
\end{align*}
We need to verify that they are in $\mathcal{FL}^2(E,\omega)$, and firstly note that we have seen in the proof of Lemma \ref{lemma-FL2} that all of these spaces are Lagrangian. Hence we only need to show that each pair of unstable and stable spaces in $\gamma_1$, $\gamma_2$ and $\gamma_3$ is Fredholm.\\
Let us consider $\gamma_2$ and leave $\gamma_1$ and $\gamma_3$ to the reader as the argument is similar and actually simpler. We set $A_\lambda(t)=S_\lambda(t-2t_0)$ for $t\in\mathbb{R}$, as well as 

\[E^s(A_\lambda,t_0)=\{u(t_0)\in E:\, Ju'(t)+A_\lambda(t)u(t)=0,\, u(t)\rightarrow 0,\, t\rightarrow +\infty\}\subset E,\]
and note that $E^s(A_\lambda,t_0)=E^s_\lambda(-t_0)$, where the latter space is the stable space in Theorem A. By (A1), the operators $A_\lambda(t)-S_\lambda(t)=K_\lambda(t-2t_0)-K(t)$ are compact, and so \cite[Thm. 3.6]{AlbertoODE} implies that $E^s(A_\lambda,t_0)$ and $E^s_\lambda(t_0)$ are commensurable. Moreover, $(E^u_\lambda(t_0),E^s_\lambda(t_0))$ is a Fredholm pair by Lemma \ref{lemma-FL2}. Now we just need to recall that if $V,W,Z\subset E$ are subspaces such that $V,W$ are commensurable and $(Z,V)$ is a Fredholm pair, then $(Z,W)$ is a Fredholm pair as well (see \cite[Prop. 2.2.1]{AlbertoBuch}). Hence $(E^u_\lambda(t_0),E^s_\lambda(-t_0))=(E^u_\lambda(t_0),E^s(A_\lambda,t_0))\in\mathcal{FL}^2(E,\omega)$.\\  
As $\{(E^u_\lambda(0),E^s_\lambda(0))\}_{\lambda\in I}$ is homotopic to $\gamma_1\ast\gamma_2\ast\gamma_3$, we obtain from the homotopy invariance and the concatenation property of the Maslov index, as well as \eqref{reverseI},

\begin{align*}
\mu_{Mas}(E^u_\cdot(0),E^s_\cdot(0))&=\mu_{Mas}(\gamma_1)+\mu_{Mas}(\gamma_2)+\mu_{Mas}(\gamma_3)\\
&=\mu_{Mas}(\gamma_2)=\mu_{Mas}(E^u_\cdot(t_0),E^s_\cdot(-t_0)),
\end{align*}
where we have used that $S_0=S_1$ and so $\gamma_1=-\gamma_3$.\\
Finally, it was shown in \cite[Thm. 2.1]{AlbertoODE} that $E^u_\lambda(\pm t)\rightarrow E^u_\lambda(\pm\infty)$ and $E^s_\lambda(\pm t)\rightarrow E^s_\lambda(\pm\infty)$ in $\mathcal{G}(E)$ for $t\rightarrow\infty$. As the stable and unstable spaces depend continuously on the asymptotically hyperbolic family $S_\lambda$ by \cite[Thm. 3.1]{AlbertoODE}, this shows that

\[\mu_{Mas}(E^u_\cdot(0),E^s_\cdot(0))=\mu_{Mas}(E^u_\cdot(+\infty),E^s_\cdot(-\infty)).\]
Consequently, the corollary follows from Theorem A.
\end{proof}
\noindent 
Let us point out that, in the special case where $E$ is of finite dimension, this corollary also shows that the main theorem of \cite{Jacobo} follows from our previous work \cite{WaterstraatHomoclinics}, which was not known before.


\subsection{Proof of Theorem A}\label{Section-ProofA}
We split the proof into four steps. Our aim of the first three steps is to prove the following weaker version of Theorem A.

\begin{theorem}\label{TheoremA-weak}
If the assumptions of Theorem A hold and the differential equations \eqref{Hamiltonian} only have the trivial solution for $\lambda=0$ and $\lambda=1$, then

\[\sfl(\mathcal{A})=\mu_{Mas}(E^u_\cdot(0),E^s_\cdot(0)).\]
\end{theorem}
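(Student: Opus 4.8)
The strategy is to reduce the statement about the differential operators $\mathcal{A}_\lambda$ on the whole line $\mathbb{R}$ to the situation covered by Theorem B, which concerns the operators $\mathcal{Q}_\lambda = J\frac{d}{dt}$ on a compact interval with boundary conditions coming from a pair of Lagrangian subspaces. The natural Lagrangian pair to use is $(E^u_\lambda(t_0), E^s_\lambda(-t_0))$ for a large $t_0 > 0$, which lies in $\mathcal{FL}^2(E,\omega)$ by Lemma \ref{lemma-FL2} and the commensurability arguments in the proof of the Corollary. Under the hypothesis that \eqref{Hamiltonian} has only the trivial solution at the endpoints $\lambda=0,1$, the path $\{(E^u_\lambda(t_0), E^s_\lambda(-t_0))\}_{\lambda\in I}$ is admissible, so Theorem B applies and tells us its Maslov index equals $\sfl(\mathcal{Q})$ for the associated path of boundary-value operators on $[-t_0,t_0]$. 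Moreover, by the homotopy and concatenation arguments already carried out in the proof of the Corollary (letting $t_0\to\infty$), $\mu_{Mas}(E^u_\cdot(t_0), E^s_\cdot(-t_0)) = \mu_{Mas}(E^u_\cdot(0), E^s_\cdot(0))$. So it remains to identify $\sfl(\mathcal{A})$ with the spectral flow of that compact-interval operator path.

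The heart of the proof is therefore a \emph{cut-and-paste} comparison: one must show that the spectral flow of $\mathcal{A} = \{\mathcal{A}_\lambda\}$ acting on $L^2(\mathbb{R},E)$ agrees with the spectral flow of the operator $Ju' + S_\lambda(t)u$ on the bounded interval $[-t_0,t_0]$ with the boundary conditions $u(-t_0)\in E^u_\lambda(t_0)$ (wait --- more precisely $u(-t_0) \in E^u_\lambda(-t_0)$, i.e. the unstable space at the left endpoint) and $u(t_0)\in E^s_\lambda(t_0)$. The idea is that a solution on $\mathbb{R}$ tending to $0$ at $\pm\infty$ is determined by its restriction to $[-t_0,t_0]$ together with the requirement that it lie in the unstable space at $-t_0$ and the stable space at $+t_0$; conversely one can glue. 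To make this precise at the level of spectral flow (not just kernels), I would use Theorem C: compute $\sfl$ on each side as the first Chern number $c_1$ of an index bundle $\sind$, and construct a gap-continuous homotopy connecting the two families of operators --- for instance by a family of "exponential dichotomy" projections that progressively replaces the tails on $(-\infty,-t_0]$ and $[t_0,\infty)$ by the boundary conditions, keeping invertibility at the endpoints throughout. The K-theoretic invariance of $\sind$ under such a homotopy, together with Lemma \ref{MANself} to absorb the conjugations by the (invertible, operator-valued) propagators, then yields the equality of spectral flows. This is exactly the step flagged in the introduction where "the capability of Theorem C will be demonstrated."

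The main obstacle, as I see it, is establishing gap-continuity of this interpolating family and making sure its endpoints stay invertible: the operators have genuinely varying domains (the boundary conditions move with $\lambda$, and one is also deforming the length of the interval / the tails), so crossing forms are unavailable and one is forced through the domain-bundle machinery of Section 4. A secondary technical point is verifying that the compact-interval operator $Ju' + S_\lambda(t)u$ with the stable/unstable boundary conditions is the same, up to conjugation by a continuous family in $GL$, as the operator $\mathcal{Q}_\lambda = Ju'$ with the Lagrangian boundary conditions that Theorem B handles --- this should follow by the change of variables $u(t)\mapsto \Phi_\lambda(t)^{-1}u(t)$ using the fundamental solution $\Phi_\lambda$ of $Ju'+S_\lambda u = 0$, which carries the stable/unstable spaces at $\pm t_0$ to fixed-looking Lagrangian data and trivializes the $S_\lambda$ term, invoking Corollary \ref{cor-orthequ} for selfadjointness and Lemma \ref{MANself} for invariance of $\sind$. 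Once continuity and the endpoint-invertibility are under control, the computation $\sfl(\mathcal{A}) = \sfl(\mathcal{Q}) = \mu_{Mas}(E^u_\cdot(0),E^s_\cdot(0))$ is assembled from Theorem B, Theorem C, and the Maslov-index homotopy already proved.
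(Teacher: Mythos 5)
Your overall strategy --- compress $\mathcal{A}$ to a compact interval, conjugate by the fundamental solution to put it in the form handled by Theorem B, and use Theorem C for the cut-and-paste comparison --- matches the paper's. But two of your intermediate steps have real problems.

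First, the detour through $(E^u_\lambda(t_0),E^s_\lambda(-t_0))$ and the claim that $\mu_{Mas}(E^u_\cdot(t_0),E^s_\cdot(-t_0))=\mu_{Mas}(E^u_\cdot(0),E^s_\cdot(0))$ ``by the homotopy and concatenation arguments in the proof of the Corollary'' is not available here: that argument produces a decomposition $\mu_{Mas}(E^u_\cdot(0),E^s_\cdot(0))=\mu_{Mas}(\gamma_1)+\mu_{Mas}(\gamma_2)+\mu_{Mas}(\gamma_3)$ in which the boundary pieces $\gamma_1$ and $\gamma_3$ cancel precisely because $S_0=S_1$, i.e. it uses (A3), which you do not have in Theorem \ref{TheoremA-weak}. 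Moreover, the admissibility of $(E^u_\cdot(t_0),E^s_\cdot(-t_0))$ does not follow from the hypothesis: $\ker(\mathcal{A}_0)=\{0\}$ is equivalent to $E^u_0(t)\cap E^s_0(t)=\{0\}$ at \emph{equal} times, not to transversality of the unstable space at $+t_0$ against the stable space at $-t_0$. The paper's route is both correct and simpler: it sets $\mathcal{Q}_\lambda = J\,d/dt$ on $[-t_0,t_0]$ with boundary conditions $u(-t_0)\in E^u_\lambda(0)$, $u(t_0)\in E^s_\lambda(0)$, applies Theorem B directly (the pair at time $0$ is admissible, exactly by the hypothesis on $\lambda=0,1$), and then the conjugation $(M_\lambda u)(t)=\Psi_\lambda(t)^{-1}u(t)$ carries $\mathcal{Q}_\lambda$ to the operator $\mathcal{A}^0_\lambda = J\,d/dt + S_\lambda(t)$ with boundary conditions $u(-t_0)\in E^u_\lambda(-t_0)$, $u(t_0)\in E^s_\lambda(t_0)$, because $\Psi_\lambda(\pm t_0)E^{u/s}_\lambda(0)=E^{u/s}_\lambda(\pm t_0)$. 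No passage to the limit $t_0\to\infty$ and no use of (A3) is needed.

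Second, for the cut-and-paste step you propose to build a gap-continuous homotopy between $\mathcal{A}$ and the bounded-interval operator, but the two live on different Hilbert spaces, $L^2(\mathbb{R},E)$ and $L^2([-t_0,t_0],E)$, so a direct homotopy of operators does not make sense. The paper instead constructs the injective extension maps $\iota_\lambda:\mathcal{D}(\mathcal{A}^0_\lambda)\rightarrow\mathcal{D}(\mathcal{A}_\lambda)$ by continuing solutions along the tails (using the exponential decay on $E^{u/s}_\lambda$), observes that $\iota_\lambda(\ker\mathcal{A}^0_\lambda)=\ker\mathcal{A}_\lambda$, and upgrades this to a commutative diagram of finite-rank bundle morphisms after choosing transversal spaces $V$ and $W_0\oplus W_1$; the logarithmic and homotopy properties of $K$-theory then give $\sind(\mathcal{A})=\sind(\mathcal{A}^0)$ outright, and Theorem C converts this into $\sfl(\mathcal{A})=\sfl(\mathcal{A}^0)$. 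This avoids both the domain-continuity issue you flag and the mismatched underlying spaces, and it is exactly where Theorem C earns its keep.
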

\noindent
For proving Theorem \ref{TheoremA-weak}, we begin by considering the Maslov index and apply in our first step an idea of Hu and Portaluri from \cite{Hu}. Then Theorem B will be used in the second step to join the Maslov index with the spectral flow of a path of differential operators having varying domains. The index bundle construction and Theorem C will show in the third step that this spectral flow is actually $\sfl(\mathcal{A})$, which shows Theorem \ref{TheoremA-weak}. Finally, in the fourth step, we lift the additional assumption in Theorem \ref{TheoremA-weak} and obtain Theorem A in its full generality.


\subsubsection*{Step 1: From $\mu_{Mas}(E^u_\cdot(0),E^s_\cdot(0))$ to $\mathcal{Q}$}
We have noted above that $\{(E^u_\lambda(0),E^s_\lambda(0))\}_{\lambda\in I}$ is a continuous path in $\mathcal{FL}^2(E,\omega)$. Hence by Theorem B we have for every fixed $t_0>0$

\begin{align}\label{mas=sfl}
\mu_{Mas}(E^u_\cdot(0),E^s_\cdot(0))=\sfl(\mathcal{Q}),
\end{align}
where $\mathcal{Q}=\{\mathcal{Q}_\lambda\}_{\lambda\in I}$ is the path of operators 

\[\mathcal{Q}_\lambda:\mathcal{D}(\mathcal{Q}_\lambda)\subset L^2([-t_0,t_0],E)\rightarrow L^2([-t_0,t_0],E)\] defined by $\mathcal{Q}_\lambda u=Ju'$ on the domains

\[\mathcal{D}(\mathcal{Q}_\lambda)=\{u\in H^1([-t_0,t_0],E):\, u(-t_0)\in E^u_\lambda(0),\, u(t_0)\in E^s_\lambda(0)\}.\]


\subsubsection*{Step 2: From $\mathcal{Q}$ to $\mathcal{A}^0$}
We consider the differential operators

\[\mathcal{A}^0_\lambda:\mathcal{D}(\mathcal{A}^0_\lambda)\subset L^2([-t_0,t_0],E)\rightarrow L^2([-t_0,t_0],E),\quad (\mathcal{A}^0_\lambda u)(t)=Ju'(t)+S_\lambda(t)u(t),\]
on the domains

\[\mathcal{D}(\mathcal{A}^0_\lambda)=\{u\in H^1([-t_0,t_0],E):\, u(-t_0)\in E^u_\lambda(-t_0),\, u(t_0)\in E^s_\lambda(t_0)\}.\]
Let $\Psi:[-t_0,t_0]\rightarrow GL(E)$ be given by

\begin{equation}\label{Psi}
\left\{
\begin{aligned}
J\Psi'_\lambda(t)+S_\lambda(t)\Psi_\lambda(t)&=0,\quad t\in[-t_0,t_0],\\
\Psi_\lambda(0)&=I_{E}
\end{aligned}
\right.
\end{equation}
and define a family of isomorphisms $M:I\rightarrow GL(L^2([-t_0,t_0],E))$ by

\[(M_\lambda u)(t)=\Psi^{-1}_\lambda(t)\,u(t).\]
We note that by \eqref{Psi}

\begin{align*}
(\Psi^T_\lambda(t)J\Psi_\lambda(t))'&=(\Psi'_\lambda(t))^TJ\Psi_\lambda(t)+\Psi^T_\lambda(t)J\Psi'_\lambda(t)\\
&=(JS_\lambda(t)\Psi_\lambda(t))^TJ\Psi_\lambda(t)+\Psi^T_\lambda(t)J^2S_\lambda(t)\Psi_\lambda(t)=0,
\end{align*}
and see from the initial value in \eqref{Psi} that $\Psi^T_\lambda(t)J\Psi_\lambda(t)=J$ for all $t\in[-t_0,t_0]$ and $\lambda\in I$. Hence 

\begin{align}\label{sympidentities}
\Psi^{-1}_\lambda(t)=-J\Psi^T_\lambda(t)J,\qquad (\Psi^{-1}_\lambda(t))^T=-J\Psi_\lambda(t)J,\quad (\lambda,t)\in I\times[-t_0,t_0].
\end{align}
We now claim that

\begin{align}\label{Q=A0}
\mathcal{A}^0_\lambda=M^T_\lambda\mathcal{Q}_\lambda M_\lambda,\quad\lambda\in I.
\end{align}
Indeed, we first see that

\begin{align*}
\mathcal{D}(M^T_\lambda\mathcal{Q}_\lambda M_\lambda)&=M^{-1}_\lambda\mathcal{D}(\mathcal{Q}_\lambda)\\
&=\{u\in H^1([-t_0,t_0],E):\, u(-t_0)\in\Psi_\lambda(-t_0)E^u_\lambda(0),\, u(t_0)\in\Psi_\lambda(t_0)E^s_\lambda(0)\}\\
&=\{u\in H^1([-t_0,t_0],E):\, u(-t_0)\in E^u_\lambda(-t_0),\, u(t_0)\in E^s_\lambda(t_0)\}=\mathcal{D}(\mathcal{A}^0_\lambda),
\end{align*}
where we have used that $\Psi_\lambda(t)E^{s/u}_\lambda(0)=E^{s/u}_\lambda(t)$ for all $t\in\mathbb{R}$. Furthermore, it follows from \eqref{sympidentities} that for $t\in[-t_0,t_0]$

\begin{align*}
(M^T_\lambda\mathcal{Q}_\lambda M_\lambda u)(t)&=-J\Psi_\lambda(t)J(J(\Psi^{-1}_\lambda(t))'u(t)+J\Psi^{-1}_\lambda(t)u'(t))\\
&=Ju'(t)-J\Psi_\lambda(t)J(J(-J(\Psi'_\lambda(t))^TJ)u(t))\\
&=Ju'(t)-J\Psi_\lambda(t)J(-(J\Psi'_\lambda(t))^Tu(t))\\
&=Ju'(t)-J\Psi_\lambda(t)J(S_\lambda(t)\Psi_\lambda(t))^Tu(t)\\
&=Ju'(t)-J\Psi_\lambda(t)J\Psi_\lambda(t)^TS_\lambda(t)u(t)\\
&=Ju'(t)+J(\Psi_\lambda(t)^TJ\Psi_\lambda(t))^TS_\lambda(t)u(t)\\
&=Ju'(t)+JJ^TS_\lambda(t)u(t)\\
&=Ju'(t)+S_\lambda(t)u(t)=(\mathcal{A}^0_\lambda u)(t).
\end{align*}
Hence \eqref{Q=A0} is shown, which implies by Corollary \ref{cor-orthequ} that each $\mathcal{A}^0_\lambda$ is a selfadjoint Fredholm operator. Moreover, $\mathcal{A}^0$ is a gap-continuous path in $\mathcal{CF}^\textup{sa}(L^2([-t_0,t_0],E))$ by Lemma \ref{gap-product}, and so the spectral flow $\sfl(\mathcal{A}^0)$ is defined.\\
The claimed equality $\sfl(\mathcal{A}^0)=\sfl(\mathcal{Q})$ is also readily seen from \eqref{Q=A0}. Indeed, we just need to note that $M$ is homotopic in $GL(L^2([-t_0,t_0],E))$ to the constant path $I_{L^2([-t_0,t_0],E)}$. Hence the homotopy invariance of the spectral flow yields

\[\sfl(\mathcal{Q})=\sfl(M^T\mathcal{Q}M)=\sfl(\mathcal{A}^0),\] 
where the continuity of the homotopy follows once again from Lemma \ref{gap-product}. This equality was the aim of this second step of our proof.


\subsubsection*{Step 3: From $\mathcal{A}^0$ to $\mathcal{A}$}
The aim of our third step is to show that 

\begin{align}\label{A0}
\sfl(\mathcal{A}^0)=\sfl(\mathcal{A}),
\end{align}
which we will do by using \eqref{complexification}, the index bundle for gap-continuous families and Theorem C. Consequently, we need to work with the complexifications of $\mathcal{A}^0$ and $\mathcal{A}$. In order to simplify our notation, we denote in this step $E^\mathbb{C}$ by $H$, but we do not introduce new symbols for the complexifications of operators and their domains.\\
In what follows, we denote by $p:L^2(\mathbb{R},H)\rightarrow L^2([-t_0,t_0],H)$ the restriction to the interval $[-t_0,t_0]$. Let us recall that

\[\mathcal{D}(\mathcal{A}^0_\lambda)=\{u\in H^1([-t_0,t_0],H):\, u(-t_0)\in E^u_\lambda(-t_0)^\mathbb{C}, u(t_0)\in E^s_\lambda(t_0)^\mathbb{C}\}\]
and $\mathcal{D}(\mathcal{A}_\lambda)=H^1(\mathbb{R},H)$. We define for $\lambda\in I$ a map

\[\iota_\lambda:\mathcal{D}(\mathcal{A}^0_\lambda)\rightarrow\mathcal{D}(\mathcal{A}_\lambda)\]
by extending $u\in \mathcal{D}(\mathcal{A}^0_\lambda)$ to the whole real line by its boundary values as follows. If $u(-t_0)\in E^u_\lambda(-t_0)^\mathbb{C}$, we can extend $u$ to the interval $(-\infty,-t_0)$ as solution of the differential equation $Ju'(t)+S_\lambda(t)u(t)=0$. Similarly, $u$ can be extended to $[t_0,+\infty)$ as $u(t_0)\in E^s_\lambda(t_0)^\mathbb{C}$. Note that $\iota_\lambda(u)$ is indeed in $H^1(\mathbb{R},H)=\mathcal{D}(\mathcal{A}_\lambda)$ due to the exponential decay of solution curves starting from $E^s_\lambda(t_0)^\mathbb{C}$ in the positive direction, or from $E^u_\lambda(-t_0)^\mathbb{C}$ in the negative direction (see \cite[Thm. 2.1]{AlbertoODE}). Moreover, $\iota_\lambda$ is injective as obviously $p\circ\iota_\lambda=I_{\mathcal{D}(\mathcal{A}^0_\lambda)}$, and the diagram

\begin{align}\label{diagram1}
\begin{split}
\xymatrix{
&\mathcal{D}(\mathcal{A}^0_{\lambda})\ar[rr]^{\mathcal{A}^0_{\lambda}\,}\ar[d]_{\iota_\lambda}&&L^2([-t_0,t_0],H)\\
&\mathcal{D}(\mathcal{A}_{\lambda})\ar[rr]^{\mathcal{A}_{\lambda}}&&L^2(\mathbb{R},H)\ar[u]_{p}}
\end{split}
\end{align}
is commutative. Finally, 

\begin{align}\label{iota}
\iota_\lambda(\ker(\mathcal{A}^0_\lambda))=\ker(\mathcal{A}_\lambda), \quad\lambda\in I,
\end{align}
and so $\mathcal{A}^0_\lambda$ is invertible if and only if $\mathcal{A}_\lambda$ is invertible. In particular, as $\mathcal{A}$ has invertible endpoints by assumption, the same is true for $\mathcal{A}^0$. Hence $\sind(\mathcal{A})$ and $\sind(\mathcal{A}^0)$ are defined, and by Theorem C we now need to show that these classes coincide in $K^{-1}(I,\partial I)$ for proving \eqref{A0}.\\
We now consider as in Section \ref{section-sflbundle} the corresponding families of operators

\begin{align*}
\hat{\mathcal{A}^0}_{(\lambda,s)}&:\mathcal{D}(\hat{\mathcal{A}^0}_{(\lambda,s)})\subset L^2([-t_0,t_0],H)\rightarrow L^2([-t_0,t_0],H),\quad \hat{\mathcal{A}^0}_{(\lambda,s)}=\mathcal{A}^0_\lambda+isI_H\\
 \hat{\mathcal{A}}_{(\lambda,s)}&:\mathcal{D}(\hat{\mathcal{A}}_{(\lambda,s)})\subset L^2(\mathbb{R},H)\rightarrow L^2(\mathbb{R},H),\quad \hat{\mathcal{A}}_{(\lambda,s)}=\mathcal{A}_\lambda+isI_H
\end{align*}
from the construction of the index bundle for selfadjoint operators, which are parametrised by $(\lambda,s)\in I\times\mathbb{R}$. Let us recall that $\mathcal{D}(\hat{\mathcal{A}}_{(\lambda,s)})=\mathcal{D}(\mathcal{A}_\lambda)$ and $\mathcal{D}(\hat{\mathcal{A}^0}_{(\lambda,s)})=\mathcal{D}(\mathcal{A}^0_\lambda)$ for all $(\lambda,s)\in I\times\mathbb{R}$.\\
By Lemma \ref{kerneltransversal}, there are $\lambda_1,\ldots,\lambda_m\in I$ such that if we let $V\subset L^2([-t_0,t_0],H)$ be the sum of the $\ker(\mathcal{A}^0_{\lambda_i})$ and $W$ the sum of the $\ker(\mathcal{A}_{\lambda_i})$ for $i=1,\ldots,m$, then 

\[\im(\hat{\mathcal{A}^0}_{(\lambda,s)})+V=L^2([-t_0,t_0],H),\quad \im(\hat{\mathcal{A}}_{(\lambda,s)})+W=L^2(\mathbb{R},H),\quad (\lambda,s)\in I\times\mathbb{R}.\] 
We set

\[W_0=\{\chi_{[-t_0,t_0]}\,u:\, u\in W\},\quad W_1=\{\chi_{\mathbb{R}\setminus[-t_0,t_0]}\,u:\, u\in W\},\]
where $\chi_{[-t_0,t_0]}$ and $\chi_{\mathbb{R}\setminus[-t_0,t_0]}$ are characteristic functions. Note that there is a canonical injective linear map $M:V\rightarrow W_0\oplus W_1$ onto $W_0$ such that $p\circ M=I_V$. Moreover, since $W\subset W_0\oplus W_1$, we see that the latter space is transversal to the image of $\hat{\mathcal{A}}$ as in \eqref{transversalselfadjointV} and so the bundle $E(\hat{\mathcal{A}},W_0\oplus W_1)$ is defined.\\
If now $u\in E(\hat{\mathcal{A}^0},V)_{(\lambda,0)}$, then $\hat{\mathcal{A}^0}_{(\lambda,0)}u\in V$ and so $M(\hat{\mathcal{A}^0}_{(\lambda,0)}u)\in W_0\subset W_0\oplus W_1$. On the other hand, $\hat{\mathcal{A}}_{(\lambda,0)}(\iota_{\lambda}u)\in W_0$ which shows that $\iota_{\lambda}(E(\hat{\mathcal{A}^0},V)_{(\lambda,0)})\subset E(\hat{\mathcal{A}},W_0\oplus W_1)_{(\lambda,0)}$. Moreover, as \eqref{diagram1} is commutative, $p(\hat{\mathcal{A}}_{(\lambda,0)}(\iota_{\lambda} u))=\hat{\mathcal{A}^0}_{(\lambda,0)}u$ and so $\hat{\mathcal{A}}_{(\lambda,0)}(\iota_{\lambda} u)=M(\hat{\mathcal{A}^0}_{(\lambda,0)}u)$, where we use that $\hat{\mathcal{A}}_{(\lambda,0)}(\iota_{\lambda}u)\in W_0$ and $M\circ p\mid_{W_0}=I_{W_0}$.\\
As $\hat{\mathcal{A}}_{(\lambda,s)}$ and $\hat{\mathcal{A}^0}_{(\lambda,s)}$ are invertible for $s\neq 0$, it is now readily seen that the maps $\iota_\lambda$, $\lambda\in I$, extend to an injective bundle morphism $\iota:E(\hat{\mathcal{A}^0},V)\rightarrow E(\hat{\mathcal{A}},W_0\oplus W_1)$ such that we have a commutative diagram 

\begin{align}\label{diagram2}
\begin{split}
\xymatrix{&E(\hat{\mathcal{A}^0},V)\ar[r]^{\hat{\mathcal{A}^0}}\ar[d]_\iota&\Theta(V)\ar[d]^{M}\\
&E(\hat{\mathcal{A}},W_0\oplus W_1)\ar[r]^{\hat{\mathcal{A}}}&\Theta(W_0\oplus W_1)}
\end{split}
\end{align}
As $\iota$ is injective, $E_0:=\iota(E(\hat{\mathcal{A}^0},V))$ is a subbundle of $E(\hat{\mathcal{A}},W_0\oplus W_1)$. Let $E_1$ be a complementary bundle, i.e. $E_0\oplus E_1=E(\hat{\mathcal{A}},W_0\oplus W_1)$. Since $\hat{\mathcal{A}}(E_0)\subset W_0$ by the commutativity of \eqref{diagram2}, 

\[\hat{\mathcal{A}}:E_0\oplus E_1\rightarrow\Theta(W_0\oplus W_1)\]
is of the form

\begin{align}\label{BundleMatrix}
\hat{\mathcal{A}}=\begin{pmatrix}
\hat{\mathcal{A}}\mid_{E_0}&C\\
0&B
\end{pmatrix}
\end{align}
for bundle morphisms $B:E_1\rightarrow\Theta(W_1)$ and $C:E_1\rightarrow\Theta(W_0)$. Moreover, as $\ker(\hat{\mathcal{A}})=\ker(\hat{\mathcal{A}}\mid_{E_0})$ by \eqref{iota}, the morphism $B:E_1\rightarrow\Theta(W_1)$ is injective. By \eqref{dimE},

\[\dim(W_0)=\dim(V)=\dim(E(\hat{\mathcal{A}^0},V))=\dim(E_0),\quad \dim(W_0\oplus W_1)=\dim(E_0\oplus E_1),\]
which implies that $\dim(E_1)=\dim(W_1)$ and shows that $B$ is an isomorphism.\\
We now deform $C$ in \eqref{BundleMatrix} linearly to $0$ and obtain from the homotopy invariance of $K$-theory in Lemma \ref{homotopyII}

\begin{align*}
\sind(\mathcal{A})&=[E(\hat{\mathcal{A}},W_0\oplus W_1),\Theta(W_0\oplus W_1),\hat{\mathcal{A}}]=[E_0\oplus E_1,\Theta(W_0\oplus W_1),\hat{\mathcal{A}}]\\
&=[E_0\oplus E_1,\Theta(W_0\oplus W_1),\hat{\mathcal{A}}\mid_{E_0}\oplus B]=[E_0,\Theta(W_0),\hat{\mathcal{A}}\mid_{E_0}]+[E_1,\Theta(W_1),B]\\
&=[E_0,\Theta(W_0),\hat{\mathcal{A}}\mid_{E_0}].
\end{align*}
Finally, we note that $\iota:E(\hat{\mathcal{A}^0},V)\rightarrow E_0$ and $M:\Theta(V)\rightarrow\Theta(W_0)$ are bundle isomorphisms. Hence the commutativity of \eqref{diagram2} implies that 

\begin{align*}
[E_0,\Theta(W_0),\hat{\mathcal{A}}\mid_{E_0}]=[E(\hat{\mathcal{A}^0},V),\Theta(V),\hat{\mathcal{A}^0}]=\sind(\mathcal{A}^0).
\end{align*}
Now \eqref{A0} follows from Theorem C. In summary, the first three steps of our proof have shown Theorem \ref{TheoremA-weak}.


\subsubsection*{Step 4: Non-admissible Paths}
The aim of this step is to lift the assumption that $\mathcal{A}$ has invertible endpoints, i.e., we want to obtain Theorem A from Theorem \ref{TheoremA-weak}.\\
As the Fredholm property is stable under small perturbations by \cite[Thm. XVII.4.2]{Gohberg}, there is $\delta>0$ such that 

\[h(\lambda,s)=\mathcal{A}_\lambda+s\delta I_{L^2(\mathbb{R},E)}\]
is Fredholm for all $(\lambda,s)\in I\times[-1,1]$. Moreover, since $0$ is either in the resolvent set or an isolated eigenvalue of finite multiplicity for selfadjoint Fredholm operators, we can assume that $h(0,s)$ and $h(1,s)$ are invertible if $s\neq 0$. Finally, we assume that $\delta$ is sufficiently small for Lemma \ref{sflperturbation} to hold.\\
The stable and unstable subspaces for the corresponding differential equations

\begin{equation*}
\left\{
\begin{aligned}
Ju'(t)+(S_\lambda(t)+s\delta I_{2n})u(t)&=0,\quad t\in\mathbb{R}\\
\lim_{t\rightarrow\pm\infty}u(t)&=0.
\end{aligned}
\right.
\end{equation*}
yield a two parameter family 

\[(E^u_{(\lambda,s)}(0),E^s_{(\lambda,s)}(0))\in\mathcal{FL}^2(E,\omega),\quad (\lambda,s)\in I\times [-1,1],\]
such that

\begin{align}\label{homotopystableunstable}
(E^u_{(\lambda,0)}(0),E^s_{(\lambda,0)}(0))&=(E^u_\lambda(0),E^s_\lambda(0)),
\end{align}
where $(E^u_\lambda(0),E^s_\lambda(0))$ are the stable and unstable spaces in Theorem A. Using the notation from Section \ref{section-Maslovmidpoint}, we have

\begin{align*}
\mu_{Mas}(E^u_{(0,\cdot)}(0),E^s_{(0,\cdot)}(0))&=\mu^{(-,0)}_{Mas}(E^u_{(0,\cdot)}(0),E^s_{(0,\cdot)}(0))+\mu^{(+,0)}_{Mas}(E^u_{(0,\cdot)}(0),E^s_{(0,\cdot)}(0)),\\
\mu_{Mas}(E^u_{(1,\cdot)}(0),E^s_{(1,\cdot)}(0))&=\mu^{(-,0)}_{Mas}(E^u_{(1,\cdot)}(0),E^s_{(1,\cdot)}(0))+\mu^{(+,0)}_{Mas}(E^u_{(1,\cdot)}(0),E^s_{(1,\cdot)}(0)).
\end{align*}
Let us now consider the two-parameter family 

\[\{(E^u_{(\lambda,s)}(0),E^s_{(\lambda,s)}(0))\}_{(\lambda,s)\in I\times I}\]
on the smaller rectangle $I\times I\subset I\times[-1,1]$. By the homotopy invariance, the Maslov index of the path obtained from restricting this family to the boundary of $I\times I$ vanishes. Hence, it follows from the concatenation property, \eqref{homotopystableunstable}, \eqref{reverseI} and \eqref{reverseII} that

\begin{align*}
\mu_{Mas}(E^u_\cdot(0),E^s_\cdot(0))&=\mu^{(+,0)}_{Mas}(E^u_{(0,\cdot)}(0),E^s_{(0,\cdot)}(0))+\mu_{Mas}(E^u_{(\cdot,1)}(0),E^s_{(\cdot,1)}(0))\\
&-\mu^{(+,0)}_{Mas}((E^u_{(1,\cdot)}(0),E^s_{(1,\cdot)}(0))).
\end{align*}   
As 

\[\mu_{Mas}(E^u_{(\cdot,1)}(0),E^s_{(\cdot,1)}(0))=\sfl(h(\cdot,1))=\sfl(\mathcal{A}^\delta)=\sfl(\mathcal{A})\]
by Theorem \ref{TheoremA-weak} and Lemma \ref{sflperturbation}, we obtain

\[\mu_{Mas}(E^u_\cdot(0),E^s_\cdot(0))=\mu^{(+,0)}_{Mas}(E^u_{(0,\cdot)}(0),E^s_{(0,\cdot)}(0))+\sfl(\mathcal{A})-\mu^{(+,0)}_{Mas}(E^u_{(1,\cdot)}(0),E^s_{(1,\cdot)}(0)).\] 
We now claim that

\begin{align}\label{final-kernel}
\mu^{(+,0)}_{Mas}(E^u_{(0,\cdot)}(0),E^s_{(0,\cdot)}(0))=\mu^{(+,0)}_{Mas}(E^u_{(1,\cdot)}(0),E^s_{(1,\cdot)}(0))=0,
\end{align}
which will prove Theorem A.\\
We consider the paths of operators $h(0,\cdot), h(1,\cdot):[-1,1]\rightarrow\mathcal{CF}^\textup{sa}(E)$ which have invertible endpoints. By Theorem \ref{TheoremA-weak}, we see that 

\[\sfl(h(0,\cdot))=\mu_{Mas}(E^u_{(0,\cdot)}(0),E^s_{(0,\cdot)}(0)),\quad \sfl(h(1,\cdot))=\mu_{Mas}(E^u_{(1,\cdot)}(0),E^s_{(1,\cdot)}(0)).\]  
On the other hand, it readily follows from the definition of the spectral flow \eqref{sfl} that

\begin{align*}
\sfl(h(0,\cdot))=\dim\ker(\mathcal{A}_0) \,\text{ and }\, \sfl(h(1,\cdot))=\dim\ker(\mathcal{A}_1).
\end{align*}
Hence

\begin{align*}
\mu_{Mas}(E^u_{(0,\cdot)}(0),E^s_{(0,\cdot)}(0))&=\dim\ker(\mathcal{A}_0)=\dim(E^u_0(0)\cap E^s_0(0))\\
&=\dim(E^u_{(0,0)}(0)\cap E^s_{(0,0)}(0)),
\end{align*}
as well as

\begin{align*}
\mu_{Mas}(E^u_{(1,\cdot)}(0),E^s_{(1,\cdot)}(0))&=\dim\ker(\mathcal{A}_1)=\dim(E^u_1(0)\cap E^s_1(0))\\
&=\dim(E^u_{(1,0)}(0)\cap E^s_{(1,0)}(0)).
\end{align*}
Since

\begin{align*}
\dim(E^u_{(0,s)}(0)\cap E^s_{(0,s)}(0))&=\dim\ker(h(0,s))=0,\\
\dim(E^u_{(1,s)}(0)\cap E^s_{(1,s)}(0))&=\dim\ker(h(1,s))=0
\end{align*}
for $s\neq 0$, \eqref{final-kernel} follows from Lemma \ref{MaslovMidpoint}.


\appendix

\section*{Appendix}


\section{Construction of the Maslov Index and a Simple Lemma}\label{app-Maslov}

\subsection{Construction of the Maslov Index}
The aim of this section is to briefly recap the construction of the Maslov index from \cite{Furutani}. Let us point out that an alternative construction of the Maslov index in this setting can be found in \cite{Hermann}.\\
Let $E$ be a real separable Hilbert space with scalar product $\langle\cdot,\cdot\rangle$. Let $\omega:E\times E\rightarrow\mathbb{R}$ be a symplectic form on $E$ such that $\omega(x,y)=\langle Jx,y\rangle$ for a bounded operator $J:E\rightarrow E$ such that $J^{2}=-I_E$ and $J^T=-J$. We can regard $E$ as complex Hilbert space through the almost complex structure $J$, where the complex inner product is given by $\langle\cdot,\cdot\rangle_J=\langle\cdot,\cdot\rangle-i\omega(\cdot,\cdot)$. In what follows we denote by $\mathcal{U}(E_J)$ the unitary operators on $E$, and set

\[\mathcal{U}_{\mathcal{F}}(E_J)=\{U\in \mathcal{U}(E_J):\, U+I_E\,\,\,\text{Fredholm}\}.\]  
The first important step in the construction is to show that there is a \textit{winding number} $w$ for paths in $\mathcal{U}_{\mathcal{F}}(E_J)$ which is defined as follows (see \cite[\S 2.1]{Furutani}). If $d:I\rightarrow \mathcal{U}_{\mathcal{F}}(E_J)$ is a path, then there is a partition $0=\lambda_0<\lambda_1<\cdots<\lambda_{m-1}<\lambda_m=1$ of $I$ and positive numbers $0<\varepsilon_j<\pi$, $j=1,\ldots,m$, such that for $\lambda_{j-1}\leq \lambda\leq \lambda_j$

\begin{align}\label{appcond}
e^{i(\pi\pm\varepsilon_j)}\notin\sigma(d(\lambda))
\end{align}
and

\begin{align}\label{appsumI}
\sum_{|\theta|\leq\varepsilon_j}{\dim\ker(d(\lambda)-e^{i(\pi+\theta)})}<\infty.
\end{align}
Now the \textit{winding number} of $d$ is defined by

\begin{align}\label{winding}
w(d)=\sum^m_{j=1}{(k(\lambda_j,\varepsilon_j)-k(\lambda_{j-1},\varepsilon_j))},
\end{align}
where

\begin{align}\label{appsumII}
k(\lambda,\varepsilon_j)=\sum_{0\leq\theta\leq\varepsilon_j}\dim\ker(d(\lambda)-e^{i(\pi+\theta)}),\quad \lambda_{j-1}\leq \lambda\leq \lambda_j.
\end{align}
It is shown in \cite[Prop. 2.3]{Furutani} that $w(d)$ does only depend on the path $d$ and neither on the partition $0=\lambda_0<\lambda_1<\cdots<\lambda_{m-1}<\lambda_m=1$ nor on the numbers $\varepsilon_j$ in \eqref{appcond} and \eqref{appsumI}. Let us point out that there are different limits for the sums in \eqref{appsumI} and \eqref{appsumII}, and the latter is the number of all eigenvalues of $d(\lambda)$ between $-1$ and $e^{i(\pi+\varepsilon_j)}$. Hence, roughly speaking, $w(d)$ is the number of eigenvalues of $d(0)$ crossing $-1$ whilst the parameter $\lambda$ travels along the unit interval.\\
Let now $W\in\Lambda(E,\omega)$ be a fixed Lagrangian subspace. The \textit{Souriau map} is defined by

\[S_W(\tilde{W})=-(I_E-2P_{\tilde{W}})(I_E-2P_W),\]
where $P_W$ and $P_{\tilde{W}}$ are the orthogonal projections onto $W$ and $\tilde{W}$, respectively. Of course, $S_W$ is defined for any closed subspace $\tilde{W}$ of $E$, but it is shown in \cite[\S 1.5]{Furutani} that $S_W$ maps $\Lambda(E,\omega)$ into $\mathcal{U}(E_J)$. Moreover, $S_W(\mathcal{FL}_W(E,\omega))\subset \mathcal{U}_{\mathcal{F}}(E_J)$ and, for any $\tilde{W}\in\mathcal{FL}_W(E,\omega)$,

\begin{align}\label{dimensionequalitySauriou}
\dim_{\mathbb{R}}(\tilde{W}\cap W)=\dim_{\mathbb{C}}\ker(S_W(\tilde{W})+I_E).
\end{align}
In other words, the dimension of the intersection $\tilde{W}\cap W$ is the multiplicity of $-1$ as an eigenvalue of $S_W(\tilde{W})\in\mathcal{U}_{\mathcal{F}}(E_J)$.\\
Finally, the \textit{Maslov index} of a path $\Lambda:I\rightarrow\mathcal{FL}_W(E,\omega)$ is defined as the composition

\[\mu_{Mas}(\Lambda,W)=w(S_W(\Lambda(\cdot)))\in\mathbb{Z}.\] 
Note that it follows from the definition of the winding number that the Maslov index has indeed the heuristic interpretation that we mentioned in Section \ref{subsection-Maslov}, i.e. it is the net number of non-trivial intersections of $\Lambda(\lambda)$ with $W$ whilst $\lambda$ travels along the unit interval. Finally, let us note that if $-\Lambda:I\rightarrow\mathcal{FL}_W(E,\omega)$ denotes the reverse path $-\Lambda(\lambda)=\Lambda(1-\lambda)$, $\lambda\in I$, then

\begin{align}\label{reverseI}
\mu_{Mas}(-\Lambda,W)=-\mu_{Mas}(\Lambda,W).
\end{align}
This is an immediate consequence of \eqref{winding} and the injectivity of the Souriau map $S_W$.


\subsection{A Simple Lemma}\label{section-Maslovmidpoint}
For a path $\Lambda:I\rightarrow\mathcal{FL}_W(E,\omega)$ and $\lambda_0\in I$, we denote by 

\[\mu_{Mas}^{(+,\lambda_0)}(\Lambda,W)\quad\text{and}\quad \mu_{Mas}^{(-,\lambda_0)}(\Lambda,W)\] 
the Maslov index of the restriction of $\Lambda$ to $[\lambda_0,1]$ and $[0,\lambda_0]$, respectively. Note that, by the concatenation property, we have

\[\mu_{Mas}(\Lambda,W)=\mu_{Mas}^{(+,\lambda_0)}(\Lambda,W)+\mu_{Mas}^{(-,\lambda_0)}(\Lambda,W),\]
and moreover, it is readily seen from \eqref{reverseI} that

\begin{align}\label{reverseII}
\begin{split}
\mu_{Mas}^{(+,\lambda_0)}(-\Lambda,W)&=-\mu_{Mas}^{(-,\lambda_0)}(\Lambda,W),\\
\mu_{Mas}^{(-,\lambda_0)}(-\Lambda,W)&=-\mu_{Mas}^{(+,\lambda_0)}(\Lambda,W).
\end{split}
\end{align}
It is an interesting question to determine the contributions of $\mu_{Mas}^{(\pm,\lambda_0)}(\Lambda,W)$ to $\mu_{Mas}(\Lambda,W)$. We will not deal with this question in its full generality, but note the following special case that we need in the proof of Theorem A.

\begin{lemma}\label{MaslovMidpoint}
Let $\Lambda:I\rightarrow\mathcal{FL}_W(E,\omega)$ be a path and let $\lambda_0\in I$ be the only parameter value where $\Lambda$ and $W$ intersect non-trivially.

\begin{itemize}
\item If

\[\mu_{Mas}(\Lambda,W)=\dim(\Lambda(\lambda_0)\cap W),\]
then

\[ \mu^{(-,\lambda_0)}_{Mas}(\Lambda,W)=\dim(\Lambda(\lambda_0)\cap W),\quad \mu^{(+,\lambda_0)}_{Mas}(\Lambda,W)=0.\]
\item If

\[\mu_{Mas}(\Lambda,W)=-\dim(\Lambda(\lambda_0)\cap W),\]
then

\[\mu^{(-,\lambda_0)}_{Mas}(\Lambda,W)=0,\quad \mu^{(+,\lambda_0)}_{Mas}(\Lambda,W)=-\dim(\Lambda(\lambda_0)\cap W).\]
\end{itemize} 
\end{lemma}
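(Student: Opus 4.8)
The plan is to use the winding-number description of the Maslov index from Appendix~\ref{app-Maslov} together with the local structure of a single non-trivial crossing. Since $\lambda_0$ is the only parameter where $\Lambda(\lambda)\cap W\neq\{0\}$, pass through the Souriau map and set $d(\lambda)=S_W(\Lambda(\lambda))\in\mathcal{U}_{\mathcal{F}}(E_J)$, so that $\mu_{Mas}(\Lambda,W)=w(d)$ and, by \eqref{dimensionequalitySauriou}, $-1\in\sigma(d(\lambda))$ if and only if $\lambda=\lambda_0$, with multiplicity exactly $k:=\dim(\Lambda(\lambda_0)\cap W)$. Away from $\lambda_0$ the eigenvalue $-1$ is absent, so by the definition \eqref{winding} of $w$ only a small neighbourhood of $\lambda_0$ contributes: choosing a partition point near $\lambda_0$ on each side and a small $\varepsilon>0$ with $e^{i(\pi\pm\varepsilon)}\notin\sigma(d(\lambda))$ for $\lambda$ near $\lambda_0$, we get $w(d)=k(\lambda_0^+,\varepsilon)-k(\lambda_0^-,\varepsilon)$ where $k(\lambda_0^\pm,\varepsilon)$ counts eigenvalues of $d$ (with multiplicity) in the arc from $-1$ to $e^{i(\pi+\varepsilon)}$ just to the right/left of $\lambda_0$.

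Next I would record the two one-sided indices in the same terms. By the concatenation property, $\mu^{(-,\lambda_0)}_{Mas}(\Lambda,W)$ is the winding number of $d$ restricted to $[0,\lambda_0]$ and $\mu^{(+,\lambda_0)}_{Mas}(\Lambda,W)$ that of $d$ on $[\lambda_0,1]$; with the same partition and the same $\varepsilon$ one computes
\[
\mu^{(-,\lambda_0)}_{Mas}(\Lambda,W)=k(\lambda_0,\varepsilon)-k(\lambda_0^-,\varepsilon),\qquad
\mu^{(+,\lambda_0)}_{Mas}(\Lambda,W)=k(\lambda_0^+,\varepsilon)-k(\lambda_0,\varepsilon),
\]
where $k(\lambda_0,\varepsilon)$ is the count at the crossing itself; this count equals $k$ plus the number of eigenvalues of $d(\lambda_0)$ strictly inside the open arc, and the latter number is $0$ once $\varepsilon$ is small, because the only eigenvalue of $d(\lambda_0)$ on the closed arc is $-1$ (all other spectrum stays uniformly away from the arc near $\lambda_0$). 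Hence $k(\lambda_0,\varepsilon)=k$, and adding the two displays back gives $w(d)=k(\lambda_0^+,\varepsilon)-k(\lambda_0^-,\varepsilon)$, consistent with the above.

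Now the hypothesis does the rest. Each of the two one-sided indices is an integer whose absolute value is at most $k$: indeed $k(\lambda_0^\pm,\varepsilon)$ and $k(\lambda_0,\varepsilon)=k$ are all nonnegative, and $k(\lambda_0^\pm,\varepsilon)$ lies between $0$ and $k$ because for $\varepsilon$ small the eigenvalues of $d(\lambda)$ in the closed arc, for $\lambda$ near but not equal to $\lambda_0$, are precisely the perturbations of the $k$-fold eigenvalue $-1$ of $d(\lambda_0)$ (spectral continuity of the finite group of eigenvalues splitting off $-1$), so at most $k$ of them survive on the sub-arc from $-1$ to $e^{i(\pi+\varepsilon)}$ and none lie below $-1$. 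Thus $0\le \mu^{(-,\lambda_0)}_{Mas}(\Lambda,W)\le k$, $-k\le\mu^{(+,\lambda_0)}_{Mas}(\Lambda,W)\le 0$, and their sum is $\mu_{Mas}(\Lambda,W)$. If this sum equals $k$, both bounds force $\mu^{(-,\lambda_0)}_{Mas}(\Lambda,W)=k$ and $\mu^{(+,\lambda_0)}_{Mas}(\Lambda,W)=0$; if it equals $-k$, both force $\mu^{(-,\lambda_0)}_{Mas}(\Lambda,W)=0$ and $\mu^{(+,\lambda_0)}_{Mas}(\Lambda,W)=-k$. The second bullet can alternatively be deduced from the first by applying it to the reverse path $-\Lambda$ and invoking \eqref{reverseI} and \eqref{reverseII}.

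The main obstacle is the sign/monotonicity bookkeeping in the third paragraph: one must argue cleanly that, near a single crossing, the contribution of each side to the winding number is nonnegative (resp. nonpositive) and bounded in absolute value by $k$, i.e. that the $k$ eigenvalues splitting off $-1$ can only contribute "in one direction" on a one-sided subinterval. This is intuitively clear from the interpretation of $w$ as a net count of eigenvalues crossing $-1$, but making it rigorous requires the spectral-continuity argument for the finite eigenvalue group near $-1$ and a careful reading of the arc conventions in \eqref{appsumII}; everything else is immediate from the concatenation property, \eqref{dimensionequalitySauriou}, and the definition \eqref{winding}.
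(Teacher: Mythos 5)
Your proposal is correct and takes essentially the same approach as the paper: both pass to the Souriau map, reduce to a single crossing with a uniform $\varepsilon$ so that the total eigenvalue count in the arc $[\pi-\varepsilon,\pi+\varepsilon]$ is constantly $k$, and then use that $k(\lambda,\varepsilon)\in[0,k]$ to force the two one-sided contributions. The paper streamlines the bookkeeping by assuming WLOG (via concatenation and property (i')) that the uniform $\varepsilon$ works on all of $I$, which avoids introducing the $\lambda_0^\pm$ notation, but the underlying argument is identical.
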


\begin{proof}
We only need to prove the first assertion as this implies the second one by \eqref{reverseI} and \eqref{reverseII}. We set $d(\lambda)=S_W(\Lambda(\lambda))$, $\lambda\in I$. By the concatenation property of the Maslov index, we can assume without loss of generality that there is $0<\varepsilon<\pi$ such that $e^{i(\pi\pm\varepsilon)}\notin\sigma(d(\lambda))$ and

\begin{align}\label{simplelemmaker}
\dim\ker(d(\lambda_0)+I_E)=\sum_{|\theta|\leq\varepsilon}{\dim\ker(d(\lambda)-e^{i(\pi+\theta)})}<\infty
\end{align}
for all $\lambda\in I$. Therefore

\begin{align}\label{simplelemmamaslovdefinition}
\begin{split}
\dim(\Lambda(\lambda_0)\cap W)&=\mu_{Mas}(\Lambda,W)=k(1,\varepsilon)-k(0,\varepsilon)\\
&=\sum_{0\leq\theta\leq\varepsilon}\dim\ker(d(1)-e^{i(\pi+\theta)})-\sum_{0\leq\theta\leq\varepsilon}\dim\ker(d(0)-e^{i(\pi+\theta)}).
\end{split}
\end{align}
As $\dim(\Lambda(\lambda_0)\cap W)=\dim\ker(d(\lambda_0)+I_E)$ by \eqref{dimensionequalitySauriou}, we see from \eqref{simplelemmaker} that $\dim(\Lambda(\lambda_0)\cap W)$ is an upper bound for  

\[\sum_{0\leq\theta\leq\varepsilon}\dim\ker(d(1)-e^{i(\pi+\theta)})\quad\text{and}\quad \sum_{0\leq\theta\leq\varepsilon}\dim\ker(d(0)-e^{i(\pi+\theta)}).\]
Hence \eqref{simplelemmamaslovdefinition} implies that

\[\sum_{0\leq\theta\leq\varepsilon}\dim\ker(d(1)-e^{i(\pi+\theta)})=\dim(\Lambda(\lambda_0)\cap W),\quad \sum_{0\leq\theta\leq\varepsilon}\dim\ker(d(0)-e^{i(\pi+\theta)})=0.\]
As by \eqref{dimensionequalitySauriou} and \eqref{simplelemmaker},

\[\sum_{0\leq\theta\leq\varepsilon}\dim\ker(d(\lambda_0)-e^{i(\pi+\theta)})=\dim\ker(d(\lambda_0)+I_E)=\dim(\Lambda(\lambda_0)\cap W),\]
we obtain

\begin{align*}
\mu^{(+,\lambda_0)}(\Lambda,W)&=\sum_{0\leq\theta\leq\varepsilon}\dim\ker(d(1)-e^{i(\pi+\theta)})-\sum_{0\leq\theta\leq\varepsilon}\dim\ker(d(\lambda_0)-e^{i(\pi+\theta)})=0
\end{align*}
as well as

\begin{align*}
\mu^{(-,\lambda_0)}(\Lambda,W)&=\sum_{0\leq\theta\leq\varepsilon}\dim\ker(d(\lambda_0)-e^{i(\pi+\theta)})-\sum_{0\leq\theta\leq\varepsilon}\dim\ker(d(0)-e^{i(\pi+\theta)})\\
&=\dim(\Lambda(\lambda_0)\cap W).
\end{align*}
\end{proof}
\noindent
Finally, let us note that a corresponding statement holds for the relative Maslov index as well, which follows straight from its definition.


\section{$K$-theory}\label{app-K}
In this appendix, we give a brief overview of topological $K$-theory for locally compact spaces, where we follow \cite{Lawson} and \cite{Segal}.\\
Let $X$ be a locally compact topological space, and $\{E_0,E_1,a\}$ a triple consisting of two complex vector bundles $E_0$ and $E_1$ over $X$ and a bundle morphism $a:E_0\rightarrow E_1$ between them. The \textit{support} $\supp(\xi)$ of $\xi=\{E_0,E_1,a\}$ is the subset of those points $\lambda\in X$ for which $a_\lambda:E_{0,\lambda}\rightarrow E_{1,\lambda}$ is not an isomorphism. Note that $\supp(\xi)\subset X$ is closed. We call $\xi$ \textit{trivial} if $\supp(\xi)=\emptyset$, i.e., if $a$ is an isomorphism. Two triples $\{E^0_0,E^0_1,a_0\}$ and $\{E^1_0,E^1_1,a_1\}$ are said to be \textit{isomorphic} if there are bundle isomorphisms $\varphi_0:E^0_0\rightarrow E^1_0$ and $\varphi_1:E^0_1\rightarrow E^1_1$ such that

\begin{align*}
\xymatrix{E^1_0\ar[r]^{a_1}&E^1_1\\
E^0_0\ar[u]^{\varphi_0}\ar[r]^{a_0}&E^0_1\ar[u]_{\varphi_1}
}
\end{align*}  
is a commutative diagram.\\
Now let us assume that $Y\subset X$ is a closed subspace. We denote by $L(X,Y)$ the set of isomorphism classes of triples $\xi=\{E_0,E_1,a\}$ on $X$ such that $\supp(\xi)$ is a compact subset of $X\setminus Y$. Note that the direct sum of vector bundles makes $L(X,Y)$ a semi-group and

\[\supp(\xi^0\oplus \xi^1)=\supp(\xi^0)\cup\supp(\xi^1),\quad \xi^0,\xi^1\in L(X,Y).\]
We say that $\xi^0=\{E^0_0,E^0_1,a_0\}$ and $\xi^1=\{E^1_0,E^1_1,a_1\}$ are \textit{homotopic}, and write $\xi^0\simeq \xi^1$, if there is an element of $L(X\times[0,1],Y\times[0,1])$ such that its restrictions to $X\times\{0\}$ and $X\times\{1\}$ are isomorphic to $\xi^0$ and $\xi^1$, respectively. Finally, we obtain an equivalence relation $\sim$ on $L(X,Y)$ by setting $\xi^0\sim \xi^1$ if there are $\eta^0,\eta^1\in L(X,Y)$ which are trivial and such that 

\[\xi^0\oplus \eta^0\simeq \xi^1\oplus \eta^1.\]
Now the \textit{K-theory} $K(X,Y)$ of the pair $(X,Y)$ is the set of equivalence classes of $L(X,Y)$. In what follows, $[E_0,E_1,a]$ denotes the class of $\{E_0,E_1,a\}\in L(X,Y)$ in $K(X,Y)$. It is readily seen that 

\[[E^0_0,E^0_1,a_0]+[E^1_0,E^1_1,a_1]:=[E^0_0\oplus E^1_0, E^0_1\oplus E^1_1, a_0\oplus a_1]\in K(X,Y)\]
makes $K(X,Y)$ an abelian group, where the neutral element is given by the equivalence class of any trivial element in $L(X,Y)$.\\
For proper maps $f:(X,Y)\rightarrow (X',Y')$ of topological pairs, we obtain a group homomorphisms

\[f^\ast:K(X',Y')\rightarrow K(X,Y),\quad f^\ast[E_0,E_1,a]=[f^\ast E_0,f^\ast E_1,f^\ast a],\]
where $f^\ast E_0$, $f^\ast E_1$ are the pullback bundles and $(f^\ast a)_\lambda=a_{f(\lambda)}$, $\lambda\in X$. If $g:(X,Y)\rightarrow (X',Y')$ and $f:(X',Y')\rightarrow (\tilde{X},\tilde{Y})$ are proper maps, then

\[(f\circ g)^\ast=g^\ast\circ f^\ast:K(\tilde{X},\tilde{Y})\rightarrow K(X,Y).\]
Moreover, $f^\ast=g^\ast:K(X',Y')\rightarrow K(X,Y)$ if $f\simeq g:(X,Y)\rightarrow(X',Y')$ are homotopic by a proper homotopy. Hence, $K$ is a contravariant functor from the category of pairs of locally compact spaces and closed subspaces to the category of abelian groups. A further homotopy invariance property, which is often useful in computations, is as follows:

\begin{lemma}\label{homotopyII}
Let $E_0$ and $E_1$ be vector bundles over $X$ and $a:[0,1]\rightarrow \hom(E_0,E_1)$ a path of bundle morphisms. If

\[ \supp \{E_0,E_1,a_t\}\subset K\subset X\setminus Y,\quad t\in [0,1],\]
for some compact set $K\subset X$, then

\[[E_0,E_1,a_0]=[E_0,E_1,a_1]\in K(X,Y).\] 
\end{lemma}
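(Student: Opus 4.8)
The plan is to reduce the statement to the definition of the equivalence relation $\sim$ on $L(X,Y)$ by exhibiting an explicit element of $L(X\times[0,1],Y\times[0,1])$ that interpolates between $\{E_0,E_1,a_0\}$ and $\{E_0,E_1,a_1\}$. Concretely, let $q:X\times[0,1]\rightarrow X$ denote the projection onto the first factor, and form the pullback bundles $q^\ast E_0$ and $q^\ast E_1$ over $X\times[0,1]$. The path $a$ assembles into a single bundle morphism $A:q^\ast E_0\rightarrow q^\ast E_1$ defined fibrewise by $A_{(\lambda,t)}=a_t(\lambda)$; the continuity of $A$ is exactly the continuity of the path $t\mapsto a_t$ in $\hom(E_0,E_1)$, so $\{q^\ast E_0,q^\ast E_1,A\}$ is a triple over $X\times[0,1]$ in the sense of Appendix \ref{app-K}.

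The key step is then the support computation. By definition, $\supp\{q^\ast E_0,q^\ast E_1,A\}=\{(\lambda,t):\, a_t(\lambda)\text{ not an isomorphism}\}=\bigcup_{t\in[0,1]}\left(\supp\{E_0,E_1,a_t\}\times\{t\}\right)$, and by hypothesis this is contained in $K\times[0,1]$, which is a compact subset of $(X\setminus Y)\times[0,1]=(X\times[0,1])\setminus(Y\times[0,1])$. Hence $\{q^\ast E_0,q^\ast E_1,A\}$ indeed defines an element of $L(X\times[0,1],Y\times[0,1])$. Its restriction to $X\times\{0\}$ is canonically isomorphic to $\{E_0,E_1,a_0\}$ and its restriction to $X\times\{1\}$ to $\{E_0,E_1,a_1\}$, since $q^\ast E_i|_{X\times\{s\}}\cong E_i$ naturally and the restricted morphism is $a_s$. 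By the definition of the homotopy relation $\simeq$ on triples, this shows $\{E_0,E_1,a_0\}\simeq\{E_0,E_1,a_1\}$, and therefore $[E_0,E_1,a_0]=[E_0,E_1,a_1]\in K(X,Y)$, since $\simeq$ implies $\sim$ (take the trivial bundles $\eta^0,\eta^1$ to be zero).

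The only genuine subtlety—and the point I would be most careful about—is verifying that $K\times[0,1]$ really is closed in $X\times[0,1]$ and disjoint from $Y\times[0,1]$, so that the triple qualifies as a legitimate element of $L(X\times[0,1],Y\times[0,1])$; this is where the hypothesis that the supports stay inside a \emph{single} compact set $K\subset X\setminus Y$, uniformly in $t$, is essential and cannot be weakened to a fibrewise condition. Everything else is a routine unwinding of the definitions in Appendix \ref{app-K}, and I would not spell out the naturality of the pullback identifications in detail.
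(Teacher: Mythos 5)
Your proof is correct. The paper does not actually give a proof of Lemma \ref{homotopyII} — it is stated in Appendix \ref{app-K} as a standard fact about the relative $K$-theory of locally compact pairs — and your argument is the natural one that unwinds the definitions: assemble the path $\{a_t\}$ into a single morphism $A:q^\ast E_0\rightarrow q^\ast E_1$ over $X\times[0,1]$, observe that $\supp\{q^\ast E_0,q^\ast E_1,A\}$ is a closed subset of the compact set $K\times[0,1]\subset(X\times[0,1])\setminus(Y\times[0,1])$ and hence compact, so the triple lies in $L(X\times[0,1],Y\times[0,1])$, and then restriction to the two ends gives exactly the definition of $\{E_0,E_1,a_0\}\simeq\{E_0,E_1,a_1\}$, which implies equality in $K(X,Y)$ upon taking $\eta^0=\eta^1$ trivial. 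Your emphasis on the role of the single compact set $K$ (uniform in $t$) is exactly the right thing to flag, since without it the support over $X\times[0,1]$ need not be compact.
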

\noindent
The following lemma is usually called the \textit{logarithmic property} of $K$.

\begin{lemma}\label{logarithmic}
For $[E_0,E_1,a_0]$, $[E_1,E_2,a_1]\in K(X,Y)$,

\[[E_0,E_1,a_0]+[E_1,E_2,a_1]=[E_0,E_2,a_1\circ a_0]\in K(X,Y).\]
\end{lemma}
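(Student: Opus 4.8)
The plan is to use the classical rotation homotopy and feed it into the homotopy‑invariance statement of Lemma~\ref{homotopyII}. Write $a_0\colon E_0\to E_1$ and $a_1\colon E_1\to E_2$ for the two given morphisms and set
\[
K:=\supp\{E_0,E_1,a_0\}\cup\supp\{E_1,E_2,a_1\},
\]
which is a compact subset of $X\setminus Y$. Since $a_{1}a_{0}$ is an isomorphism wherever both $a_0$ and $a_1$ are, one has $\supp\{E_0,E_2,a_1a_0\}\subseteq K$, so the right‑hand side of the asserted identity is a well‑defined element of $K(X,Y)$. The goal is then to exhibit a homotopy, supported in $K$, between a bundle morphism representing the left‑hand side and one representing the right‑hand side plus a trivial summand.

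Concretely, I would consider for $t\in[0,1]$ the bundle morphism $\Phi_t\colon E_0\oplus E_1\to E_1\oplus E_2$ given in block form by
\[
\Phi_t=\begin{pmatrix}\cos\!\big(\tfrac{\pi t}{2}\big)\,a_0 & -\sin\!\big(\tfrac{\pi t}{2}\big)\,I_{E_1}\\[1.2mm] \sin\!\big(\tfrac{\pi t}{2}\big)\,a_1a_0 & \cos\!\big(\tfrac{\pi t}{2}\big)\,a_1\end{pmatrix},
\]
which is manifestly continuous in $t$. Over any point where both $a_0$ and $a_1$ are isomorphisms there is a factorisation
\[
\Phi_t=\begin{pmatrix}a_0 & 0\\ 0 & a_1\end{pmatrix}\begin{pmatrix}\cos\!\big(\tfrac{\pi t}{2}\big) & -\sin\!\big(\tfrac{\pi t}{2}\big)\,a_0^{-1}\\[1.2mm] \sin\!\big(\tfrac{\pi t}{2}\big)\,a_0 & \cos\!\big(\tfrac{\pi t}{2}\big)\end{pmatrix},
\]
whose second factor is invertible there, with inverse obtained by flipping the signs of the off‑diagonal entries (using $\cos^2+\sin^2=1$). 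Hence $\Phi_t$ is an isomorphism off $K$ for every $t$, i.e. $\supp\{E_0\oplus E_1,E_1\oplus E_2,\Phi_t\}\subseteq K$ for all $t\in[0,1]$, and Lemma~\ref{homotopyII} yields
\[
[E_0\oplus E_1,E_1\oplus E_2,\Phi_0]=[E_0\oplus E_1,E_1\oplus E_2,\Phi_1]\in K(X,Y).
\]

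It then remains to identify the two endpoints. At $t=0$ one has $\Phi_0=a_0\oplus a_1$, so its class equals $[E_0,E_1,a_0]+[E_1,E_2,a_1]$ by the definition of the group law. At $t=1$ the morphism $\Phi_1$ has zero diagonal, $(1,2)$‑entry $-I_{E_1}$ and $(2,1)$‑entry $a_1a_0$; composing the target $E_1\oplus E_2$ with the flip isomorphism onto $E_2\oplus E_1$ produces an isomorphic triple (identity on the source, flip on the target) with morphism $(a_1a_0)\oplus(-I_{E_1})$. Therefore
\[
[E_0\oplus E_1,E_1\oplus E_2,\Phi_1]=[E_0,E_2,a_1a_0]+[E_1,E_1,-I_{E_1}],
\]
and since $-I_{E_1}$ is an isomorphism over all of $X$, the triple $\{E_1,E_1,-I_{E_1}\}$ is trivial and represents $0$. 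Combining the three displays gives $[E_0,E_1,a_0]+[E_1,E_2,a_1]=[E_0,E_2,a_1\circ a_0]$. The only slightly delicate point is the book‑keeping in verifying the factorisation of $\Phi_t$, and hence the support bound $\supp\{\Phi_t\}\subseteq K$; once that is in place the result is a direct application of the definitions and of Lemma~\ref{homotopyII}.
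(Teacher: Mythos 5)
Your proof is correct. The paper does not actually prove Lemma~\ref{logarithmic} but merely states it, deferring to the standard references cited in Appendix~\ref{app-K}; the rotation homotopy you use, deforming $a_0\oplus a_1$ on $E_0\oplus E_1\to E_1\oplus E_2$ to $(a_1a_0)\oplus(-I_{E_1})$ (after a flip of the target) through morphisms supported in $K=\supp\{E_0,E_1,a_0\}\cup\supp\{E_1,E_2,a_1\}$, is precisely the standard argument those references give, and your bookkeeping (the factorisation of $\Phi_t$, the explicit inverse of the second factor via $\cos^2+\sin^2=1$, the support bound, and the identification of the endpoints) is accurate.
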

\noindent
The groups

\[K^{-1}(X,Y)=K(X\times\mathbb{R},Y\times\mathbb{R})\]
are called the \textit{odd $K$-theory groups} and, as above, any proper map $f:(X,Y)\rightarrow(X',Y')$ induces a homomorphism

\[f^\ast:K^{-1}(X',Y')\rightarrow K^{-1}(X,Y).\]
Finally, we want to recall the well known isomorphism $c_1:K^{-1}(I,\partial I)\rightarrow\mathbb{Z}$ coming from the first Chern number, where $I\subset\mathbb{R}$ is a compact interval. If 

\[[E_0,E_1,a]\in K^{-1}(I,\partial I)=K(I\times\mathbb{R},\partial I\times\mathbb{R}),\]
then there are global trivialisations $\psi: E_0\rightarrow(I\times\mathbb{R})\times\mathbb{C}^n$ and $\varphi:E_1\rightarrow(I\times\mathbb{R})\times\mathbb{C}^n$, as any bundle over the contractible space $I\times\mathbb{R}$ is trivial. Now, the first Chern number can be computed by

\begin{align}\label{Chern}
c_1([E_0,E_1,a])=w(\det(\varphi\circ a\circ\psi^{-1})\circ\gamma,0)\in\mathbb{Z},
\end{align}
where $\gamma:S^1\rightarrow I\times\mathbb{R}$ is any positively oriented simple curve surrounding the support of $\{E_0,E_1,a\}$. Here, $w(\cdot,0)$ denotes the winding number for closed curves in $\mathbb{C}\setminus\{0\}$ with respect to $0$.

\thebibliography{99}

\bibitem{AlbertoTMNA} A. Abbondandolo, \textbf{A new cohomology for the Morse theory of strongly indefinite functionals on Hilbert spaces}, Topol. Methods Nonlinear Anal. \textbf{9}, 1997, 325--382

\bibitem{AlbertoNonlinear} A. Abbondandolo, \textbf{Morse theory for asymptotically linear Hamiltonian systems}, 
 Nonlinear Anal. \textbf{39}, 2000, Ser. A: Theory Methods, 997--1049
 
\bibitem{AlbertoBuch} A. Abbondandolo, \textbf{Morse theory for Hamiltonian systems}, Chapman \& Hall/CRC Research Notes in Mathematics \textbf{425}

\bibitem{AlbertoMorseHilbert} A. Abbondandolo, P. Majer, \textbf{Morse homology on Hilbert spaces}, Comm. Pure Appl. Math.  \textbf{54}, 2001, 689--760

\bibitem{AlbertoODE} A. Abbondandolo, P. Majer, \textbf{Ordinary differential operators in Hilbert spaces and Fredholm pairs}, Math. Z. \textbf{243}, 2003, 525--562

\bibitem{AlbertoInfinite} A. Abbondandolo, P. Majer, \textbf{When the Morse index is infinite}, Int. Math. Res. Not. 2004,  no. 71, 3839--3854

\bibitem{AlbertoManifold} A. Abbondandolo, P. Majer, \textbf{A Morse complex for infinite dimensional manifolds I}, Adv. Math. \textbf{197}, 2005,  no. 2, 321--410

\bibitem{AlbertoNotes}  A. Abbondandolo, P. Majer, \textbf{Lectures on the Morse complex for infinite-dimensional manifolds}, Morse theoretic methods in nonlinear analysis and in symplectic topology, 1--74, NATO Sci. Ser. II Math. Phys. Chem., 217, Springer, Dordrecht, 2006

\bibitem{AbbondandoloMajerGrass} A. Abbondandolo, P. Majer, \textbf{Infinite dimensional Grassmannians}, J. Operator Theory \textbf{61}, 2009, 19--62

\bibitem{vanderVorst} S. Angenent, R. van der Vorst, \textbf{A superquadratic indefinite elliptic system and its Morse-Conley-Floer homology}, Math. Z. \textbf{231}, 1999, 203--248

\bibitem{Maslov} V.I. Arnold, \textbf{On a characteristic class entering into conditions of quantization},
 Funkcional. Anal. i Prilozen. \textbf{1}, \textbf{1967}, 1--14

\bibitem{Atiyah} M. F. Atiyah, I. M.  Singer, \textbf{The index of elliptic operators IV}, Ann. of Math. (2) \textbf{93}, 1971, 119--138

\bibitem{APS} M.F. Atiyah, V.K.  Patodi, I.M.  Singer, \textbf{Spectral asymmetry and Riemannian geometry III},
 Math. Proc. Cambridge Philos. Soc.  \textbf{79}, 1976, 71--99

\bibitem{BoossFurutani} B. Booss-Bavnbek, K. Furutani, \textbf{Symplectic functional analysis and spectral invariants}, Geometric aspects of partial differential equations (Roskilde, 1998), 53--83, Contemp. Math., 242, Amer. Math. Soc., Providence, RI,  1999

\bibitem{BoossZhu} B. Booss-Bavnbek, C. Zhu, \textbf{The Maslov index in weak symplectic functional analysis},  Ann. Global Anal. Geom. \textbf{44}, 2013, 283--318

\bibitem{BoossZhuII} B. Booss-Bavnbek, C. Zhu, \textbf{The Maslov index in symplectic Banach spaces}, accepted for publication in Mem. Amer. Math. Soc., arXiv:1406.0569

\bibitem{UnbSpecFlow} B. Booss-Bavnbek, M. Lesch, J. Phillips, \textbf{Unbounded Fredholm operators and spectral flow}, Canad. J. Math. \textbf{57}, 2005, 225--250

\bibitem{Cappell} S.E. Cappell, R. Lee, E.Y. Miller, \textbf{On the Maslov index}, Comm. Pure Appl. Math. \textbf{47},  1994, 121--186

\bibitem{Mike} P. M. Fitzpatrick, M. Testa, \textbf{The parity of paths of closed Fredholm operators of index zero}, Differential Integral Equations  \textbf{7}, 1994, 823-846

\bibitem{FPR}  P.M. Fitzpatrick, J. Pejsachowicz, L. Recht, \textbf{Spectral Flow and Bifurcation of Critical Points of Strongly-Indefinite Functionals-Part I: General Theory}, J. Funct. Anal. \textbf{162}, 1999, 52--95

\bibitem{FPRII} P.M. Fitzpatrick, J. Pejsachowicz, L. Recht, 
\textbf{Spectral Flow and Bifurcation of Critical Points of Strongly-Indefinite Functionals-Part II: Bifurcation of Periodic Orbits of Hamiltonian Systems}, J. Differential Equations \textbf{163}, 2000, 18--40

\bibitem{Furutani} K. Furutani, \textbf{Fredholm-Lagrangian-Grassmannian and the Maslov index}, J. Geom. Phys. \textbf{51}, 2004, 269--331

\bibitem{MarekI} K. G\c{e}ba, M. Izydorek, A. Pruszko, \textbf{The Conley index in Hilbert spaces and its applications}, Studia Math. \textbf{134}, 1999, 217--233

\bibitem{Gohberg} I. Gohberg, S. Goldberg, M.A. Kaashoek, \textbf{Classes of linear operators}, Vol. I,
Operator Theory: Advances and Applications \textbf{49}, Birkh\'{a}user Verlag, Basel, 1990

\bibitem{Hu} X. Hu, A. Portaluri, \textbf{Index theory for heteroclinic orbits of Hamiltonian systems}, Calc. Var. Partial Differential Equations  \textbf{56}, 2017

\bibitem{MarekII} M. Izydorek, \textbf{A cohomological Conley index in Hilbert spaces and applications to strongly indefinite problems}, J. Differential Equations \textbf{170}, 2001, 22--50

\bibitem{Janich} K. J\"anich, \textbf{Vektorraumb\"undel und der Raum der Fredholm-Operatoren}, Math. Ann. \textbf{161}, 1965, 129--142

\bibitem{Kato} T. Kato, \textbf{Perturbation theory for linear operators}, Reprint of the 1980 edition, Classics in Mathematics, Springer-Verlag, Berlin,  1995

\bibitem{Kryszewski} W. Kryszewski, A. Szulkin, \textbf{An infinite-dimensional Morse theory with applications},  Trans. Amer. Math. Soc. \textbf{349}, 1997, 3181--3234

\bibitem{Lang} S. Lang, \textbf{Differential and Riemannian manifolds}, Graduate Texts in Mathematics \textbf{160}, Springer-Verlag, New York, 1995

\bibitem{Lawson} H.B. Lawson, M-L Michelsohn, \textbf{Spin Geometry}, Princeton University Press, 1989

\bibitem{Lesch} M. Lesch, \textbf{The uniqueness of the spectral flow on spaces of unbounded self-adjoint Fredholm operators}, Spectral geometry of manifolds with boundary and decomposition of manifolds, 193--224, Contemp. Math., 366, Amer. Math. Soc., Providence, RI,  2005

\bibitem{Maalaoui} A. Maalaoui, V. Martino, \textbf{The Rabinowitz-Floer homology for a class of semilinear problems and applications}, J. Funct. Anal. \textbf{269}, 2015, 4006--4037

\bibitem{NicolaescuI} L. Nicolaescu, \textbf{The Maslov index, the spectral flow, and decompositions of manifolds}, Duke Math. J. \textbf{80}, 1995, 485--533

\bibitem{NicolaescuII} L. Nicolaescu, \textbf{Generalized symplectic geometries and the index of families of elliptic problems}, Mem. Amer. Math. Soc. \textbf{128}, 1997

\bibitem{Jacobo} J. Pejsachowicz, \textbf{Bifurcation of homoclinics of Hamiltonian systems}, Proc. Amer. Math. Soc. \textbf{136}, 2008, 2055--2065

\bibitem{Pejsachowicz} J. Pejsachowicz, N. Waterstraat, \textbf{Bifurcation of critical points for continuous families of $C^2$-functionals of Fredholm type}, J. Fixed Point Theory Appl. \textbf{13}, 2013, 537--560

\bibitem{Philips} J. Phillips, \textbf{Self-adjoint Fredholm operators and spectral flow}, Canad. Math. Bull. \textbf{39}, 1996, 460--467 

\bibitem{Piccione} P. Piccione, D. V.  Tausk, \textbf{Complementary Lagrangians in infinite dimensional symplectic Hilbert spaces}, An. Acad. Brasil. Cienc.  \textbf{77}, 2005, 589--594

\bibitem{AleSmaleIndef} A. Portaluri, N. Waterstraat, \textbf{A Morse-Smale index theorem for indefinite elliptic systems and bifurcation}, J. Differential Equations \textbf{258}, 2015, 1715--1748, arXiv:1408.1419 [math.AP]

\bibitem{PortaluriWaterstraat} A. Portaluri, N. Waterstraat, \textbf{A K-theoretical invariant and bifurcation for homoclinics of Hamiltonian systems}, J. Fixed Point Theory Appl.  \textbf{19}, 2017, 833--851

\bibitem{Robin} J. Robbin, D. Salamon, \textbf{The spectral flow and the Maslov index}, Bull. London Math. Soc.  \textbf{27}, 1995, 1--33
		
\bibitem{SalamonWehrheim} D. Salamon, K. Wehrheim, \textbf{Instanton Floer homology with Lagrangian boundary conditions}, Geom. Topol. \textbf{12}, 2008, 747--918

\bibitem{Hermann} H. Schulz-Baldes, \textbf{Signature and spectral flow of $J$-unitary $S^1$-Fredholm operators}, Integral Equations Operator Theory \textbf{78}, 2014, 323--374

\bibitem{Segal} G. Segal, \textbf{Equivariant K-Theory}, Publ. Math. Inst. Hautes Etudes Sci. \textbf{34}, 1968, 129--151

\bibitem{Steenrod} N. Steenrod, \textbf{The Topology of Fibre Bundles}, Princeton University Press, 1951

\bibitem{Swanson} R.C. Swanson, \textbf{Fredholm intersection theory and elliptic boundary deformation
 problems I}, J. Differential Equations \textbf{28}, 1978, 189--201

\bibitem{Szulkin} A. Szulkin, \textbf{Cohomology and Morse theory for strongly indefinite functionals}, Math. Z.  \textbf{209}, 1992, 375--418


\bibitem{thesis} N. Waterstraat, \textbf{The Index Bundle for Gap-Continuous Families, Morse-Type Index Theorems and Bifurcation}, PhD thesis, G\"ottingen, 2011, available at http://ediss.uni-goettingen.de/handle/11858/00-1735-0000-0006-B3F1-1?locale-attribute=en

\bibitem{indbundleIch} N. Waterstraat, \textbf{The index bundle for Fredholm morphisms}, Rend. Sem. Mat. Univ. Politec. Torino \textbf{69}, 2011, 299--315

\bibitem{IchK} N. Waterstraat, \textbf{A K-theoretic proof of the Morse index theorem in semi-Riemannian geometry}, Proc. Amer. Math. Soc. \textbf{140}, 2012, 337--349

\bibitem{CalcVar} N. Waterstraat, \textbf{A family index theorem for periodic Hamiltonian systems and bifurcation}, Calc. Var. Partial Differential Equations  \textbf{52}, 2015, 727--753

\bibitem{WaterstraatHomoclinics} N. Waterstraat, \textbf{Spectral flow, crossing forms and homoclinics of Hamiltonian systems}, Proc. Lond. Math. Soc. (3) \textbf{111}, 2015, 275--304

\bibitem{Edinburgh} N. Waterstraat, \textbf{Spectral flow and bifurcation for a class of strongly indefinite elliptic systems}, Proc. Roy. Soc. Edinburgh Sect. A, to appear

\bibitem{Russian} M. G. Zaidenberg, S. G.  Krein, P. A.  Kucment, A. A. Pankov, \textbf{Banach bundles and linear operators}, Uspehi Mat. Nauk  \textbf{30}, 1975, 101--157

\vspace{1cm}
Nils Waterstraat\\
School of Mathematics,\\
Statistics \& Actuarial Science\\
University of Kent\\
Canterbury\\
Kent CT2 7NF\\
UNITED KINGDOM\\
E-mail: n.waterstraat@kent.ac.uk

\end{document}